\documentclass[a4paper,11pt,reqno,natbib]{amsart}

\usepackage[utf8]{inputenc}
\usepackage[english]{babel}
\usepackage[T1]{fontenc}    
\usepackage{lmodern}


\usepackage[left=3cm,right=3cm,top=2.5cm,bottom=2.5cm]{geometry}

\usepackage{amsmath}
\usepackage{amssymb}
\usepackage{amsthm}
\usepackage{amsfonts}
\usepackage{bm}
\usepackage{mathrsfs}
\usepackage{mathtools}

\usepackage{graphicx}
\usepackage{tikz}
\usepackage{caption}
\usepackage{enumitem}

\sloppy \allowdisplaybreaks \numberwithin{equation}{section}
\begingroup
\theoremstyle{plain}
\newtheorem{theorem}{Theorem}[section]
\newtheorem{proposition}[theorem]{Proposition}
\newtheorem{lemma}[theorem]{Lemma}

\newtheorem{definition}[theorem]{Definition}
\newtheorem{remark}[theorem]{Remark}
\newtheorem{example}[theorem]{Example}
\endgroup

\usepackage[foot]{amsaddr}

\title{Shape Optimization of hemolysis for shear thinning flows in moving domains}


\author{Valentin Calisti$^{\boldsymbol{1}}$, \v{S}\'{a}rka Ne\v{c}asov\'{a}$^{\boldsymbol{2}}$}

\address{
$^{{\boldsymbol{1}} , {\boldsymbol{2}}}$Institute of Mathematics of the Czech Academy of Sciences, 
\v{Z}itn\'{a} 25, 115 67 Praha 1, Czech Republic. 
}
\email{$^{\boldsymbol{1}}$calisti@math.cas.cz, $^{\boldsymbol{2}}$matus@math.cas.cz}

\date{\today}

\newcommand{\pmS}{k}
\newcommand{\pmR}{M}
\newcommand{\pmE}{m}
\newcommand{\pmA}{n}

\newcommand{\hardy}{\mathcal{H}}
\newcommand{\bogov}{\mathcal{B}}
\newcommand{\korn}{\mathcal{K}}
\newcommand{\dn}{3}

\newcommand{\conv}{*}

\newcommand{\Qhad}{\mathfrak{Q}}
\newcommand{\inj}{\hookrightarrow}

\newcommand{\stress}{\mathbf{S}}

\newcommand{\vel}{\mathbf{V}}

\newcommand{\pr}{\pi}
\newcommand{\prn}{\pi^{\pmS}}

\newcommand{\uu}{\boldsymbol{u}}
\newcommand{\extu}{\widetilde{\boldsymbol{u}}}
\newcommand{\uun}{\boldsymbol{u}^{\pmS}}
\newcommand{\extun}{\widetilde{\boldsymbol{u}}^{\pmS}}

\newcommand{\ww}{\boldsymbol{w}}
\newcommand{\hh}{\boldsymbol{h}}
\newcommand{\vv}{\boldsymbol{v}}
\newcommand{\FF}{\boldsymbol{F}}

\newcommand{\hemo}{h}

\newcommand{\sol}{\mrm{div}}
\newcommand{\sym}{\mrm{sym}}

\newcommand{\Gs}{\mathrm{D}}
\newcommand{\dm}{\,d}

\newcommand{\espX}{\underline{\mrm{X}}{\hspace{0.25ex}}}

\newcommand{\Xq}{W^{1,q}_{\div}}

\newcommand{\Vq}{W^{1,q}_{0,\div}}
\newcommand{\Vp}{W^{1,p}_{0,\div}}
\newcommand{\Yq}{L^{q}_{\div}}

\newcommand{\Hq}{L^{q}_{0,\div}}

\newcommand{\SOinf}{\mrm{I}_{\SHPfun}}

\def \div {\mathop {\rm div}\nolimits} 
\newcommand{\bd}[1]{\boldsymbol{#1}} 
\newcommand{\mrm}[1]{\mathrm{#1}}
\newcommand{\supp}{\mathop {\rm supp}\nolimits} 
\newcommand{\dist}{\mathop {\rm dist}\nolimits} 
\newcommand{\Lip}{\mathop {\rm Lip}\nolimits} 
\newcommand{\gd}{\nabla}
\newcommand{\Ksub}{\subset\subset} 
\newcommand{\lV}{\lVert}
\newcommand{\rV}{\rVert}
\newcommand{\lv}{\lvert}
\newcommand{\rv}{\rvert} 
\newcommand{\NN}{\mathbb N} 
\newcommand{\RR}{\mathbb R}  
\newcommand{\nC}{\mathcal C} 
\newcommand{\D}{\mathcal D}
\newcommand{\nH}{\mathcal H}
\newcommand{\Opn}{\mathcal O}
\newcommand{\nP}{\mathcal P}
\newcommand{\R}{\mathcal R} 
\newcommand{\V}{\mathcal V}  
\newcommand{\X}{\mathcal X}
\newcommand{\BA}{\mathbf A}
\newcommand{\bB}{\mathbf B}
\newcommand{\Bg}{\mathbf g}
\newcommand{\BT}{\mathbf T}
\newcommand{\had}{D} 
\newcommand{\SHPfun}{\mathcal{J}}
\newcommand{\shpFUN}{j}

\usepackage[backend=bibtex, 
		   giveninits, 
		   doi=false,
		   isbn=false,
		   url=false,
		   maxbibnames=9,
		   maxcitenames=4
		   ]{biblatex}
\addbibresource{article_xiv.bib}


\begin{document}

\maketitle

\begin{abstract}
We consider the $3$D problem of shape optimization of  blood flows in moving domains. 
Such a geometry is adopted to take into account the modeling  of rotating systems and blood pumps for instance. 
The blood flow is described by generalized Navier-Stokes equations, in the particular case of shear-thinning flows. 
For a sequence of converging moving domains, we show that a sequence of associated solutions to blood equations converges to a solution of the problem written on the limit moving domain.  
Thus, we extended the result given in (Soko\l{}owski, Stebel, 2014, in \textit{Evol. Eq. Control Theory}) for $q \geq 11/5$, 
to the range $6/5< q < 11/5$, 
where $q$ is the exponent of the rheological law. 
This shape continuity property allows us to show the existence of minimal shapes for a class of functionals depending on the blood velocity field and its gradient. 
This allows to consider in particular the problem of hemolysis minimization in blood flows, namely the minimization of red blood cells damage.

\end{abstract}

\noindent
\textbf{Keywords:} 
Shape optimization, generalized Navier-Stokes, moving domains, shear thinning blood flows

{\small
\tableofcontents
}

\section{Introduction}

For patients suffering from heart failure, blood pumps may be essential,  
but there are risks of complications associated with this type of device, 
such as bleeding, thrombosis and infections \cite{thamsen2015numerical}. 
The other detrimental consequence we are concerned with in this article is \emph{hemolysis}, 
which refers to the destruction of red blood cells (RBCs), leading to a release of free hemoglobin in the blood plasma.    
For instance, this can be toxic and provoke renal failure.

The natural question that arises is how to minimize hemolysis. 
Numerous 
studies have focused on the numerical optimization of design parameters of blood flows or pumps. 
We refer among others 
to \cite{abraham2005shape-unstead}  
where a Carreau-Yasuda model is considered, 
and to \cite{alonso_silva2021optopo_blood} concerning a Tesla-type pump and the minimization of hemolysis. 
To generalize this approach, we address the theoretical shape optimization of these pumps, enabling us to broaden the set of admissible solutions.   
Since the state of hemolysis in the blood depends directly on the gradient of the blood velocity field (see Example~\ref{thm:ex:hemolysis}), 
the present work therefore focuses on the shape optimization of blood flows in moving domains.

The main obstacle to be overcome is the following:  
the non-Newtonian behavior of blood has important effects on blood flows and should be taken into account (see, e.g. \cite{l_m_z2008shear-thinning}).  
Indeed, human blood is mainly composed of plasma  
and RBCs,  
and their interaction yields a \emph{shear-thinning} behavior,  
that is the \emph{viscosity} $\nu$ decreases when the shear rate increases.  
To take this effect into account, we assume that the blood viscosity depends non-linearly  
on the symmetric gradient of the velocity field $\Gs \uu$.
Namely, we assume that the \emph{viscous stress tensor} $\stress ( \Gs \uu ):= \nu (\Gs \uu) \Gs \uu$ is given by:
\begin{equation}
\label{eq:def:stress}
\stress ( \Gs \uu )  := \left( 1 + \lvert \Gs \uu \rvert \right)^{q-2}  \Gs \uu , 
\quad 
\text{for } 6/5 < q < 2, 
\quad 
\text{and } 
\quad 
\Gs \uu  := \frac{1}{2} ( \gd \uu + \gd \uu ^\top ).
\end{equation}
We first note that for $q=2$, the classical Newtonian viscous stress tensor is retrieved, for a constant 
viscosity $\nu = 1$. 
Fluids satisfying \eqref{eq:def:stress} 
with $q>2$ are called \emph{shear thickening fluids}  
(e.g. cornflour batter). 
Blood, on the contrary, is a \emph{shear thinning fluid} with $q<2$  
(see, e.g. \cite{farina_et-al2016non-newt}, where $q\simeq 1.22$).  
For this type of rheological law, we have no uniqueness results for weak solutions, which complicates the study of the shape sensitivity of the problem.

To the best of our knowledge, the
shape optimization analysis of shear-thinning fluids has never been addressed. 
In \cite{sokolowski_stebel_2014shapo_nonnewt_t-dep}, the existence of shapes minimizing generic functionals is given for non-Newtonian fluids with 
$q \geq 11/5 $ in moving domain. 
Here, we extend this result to the range $6/5 < q < 11/5$ (Section~\ref{sec:continuity:blood}), relying on the existence result provided in \cite{nagele_ruzicka2018}.  
This leads to the proof (Section \ref{sec:exist-min}) of the main result written in Theorem~\ref{thm:main-result}.
After a presentation of the geometry and the blood model in Subsection \ref{sec:hemo-model}, 
we introduce the shape optimization problem in Subsection~\ref{sec:shapo:def}, together with the main result. 
In particular, this allows us to show that there exists optimal shapes for the hemolysis minimization problem in moving domains (see Example~\ref{thm:ex:hemolysis}).

\subsection{A blood model in moving domains}
\label{sec:hemo-model}

Let $\had$ be a bounded open subset of $\RR^{\dn}$, and $\Omega$ an open set such that $\Omega \Ksub \had$. 
Let $\vel$ be a smooth solenoidal  velocity field such that $\supp (\vel) \subset\subset [0,T] \times \had$.
We denote by $\bd{\varphi}$ the unique solution to:  
\begin{equation}
\label{eq:def:transfo-assoc-velocity}
\left\lbrace
\begin{aligned}
\partial_t \bd{\varphi} (t , x) &= \vel ( t , \bd{\varphi} (t , x) )  && \text{ for }(t,x) \in [0,T] \times \had , \\
\bd{\varphi} ( 0 , x ) & = x && \text{ for }  x \in \had ,
\end{aligned}
\right.
\end{equation}
and we define (see Figure \ref{fig:mov-dom}): 
\begin{equation}
\label{eq:def:Omegat}
\Omega_t := \bd{\varphi} ( t , \Omega ) , \quad 
Q := \bigcup_{t \in (0,T)} \lbrace t \rbrace  \times \Omega_t , \quad
\Sigma := \bigcup_{t \in (0,T)} \lbrace t \rbrace  \times \partial\Omega_t .
\end{equation}
We also define the time-space cylinder containing any moving domain $Q$ defined as above, that we
call \emph{hold-all cylinder}:
\begin{equation*}
\Qhad := (0,T) \times \had.
\end{equation*}

\begin{figure}[h]
\begin{center}
\begin{tikzpicture}[scale=0.8]

\begin{scope}[scale=0.75, xshift=-4.3cm, yshift=-2.7cm]
\def\WiLi{0.2}
\def\haut{4}
\draw[->,dash pattern=on 2pt off 2pt, line width = \WiLi mm] 
(-1,0) -- (9,0)  ; 
\draw[dash pattern=on 2pt off 2pt, line width = \WiLi mm] 
(8,0) -- (8,5)  ; 
\draw[dash pattern=on 2pt off 2pt, line width = \WiLi mm] 
(8,5) -- (0,5) ; 
\draw[dash pattern=on 2pt off 2pt, line width = \WiLi mm] 
(0,5) -- (0,0) ; 
\draw[->,dash pattern=on 2pt off 2pt, line width = \WiLi mm] 
(-0.5,-0.5) -- (-0.5,6)  ; 
\draw[line width = 1.3*\WiLi mm, color=white] 
(-0,0) -- (8,0)  ; 
\draw[dash pattern=on 4pt off 2pt, line width = 1.8*\WiLi mm, color=blue] 
(-0,0) -- (8,0)  ; 
\draw[line width = 2*\WiLi mm] 
(2,0) to (5,0) ; 
\draw[line width = 2*\WiLi mm] 
(6,5) to (3,5) ; 
\draw[dash pattern=on 4pt off 2pt, line width = 2*\WiLi mm] 
(5,0) to[out=100, in=250] (6,5) ; 
\draw[dash pattern=on 4pt off 2pt, line width = 2*\WiLi mm] 
(3,5) to [out=240, in=40] (2.5,2.5) to[out=220, in=90] (2,0)  ; 	
\draw[line width = 2*\WiLi mm] 
(-0.7,5) to   (-0.3,5)   ; 	
\end{scope}	

\draw (-3.6,2.8) node{$t\in \RR$} ;
\draw (4,-1.95) node{$\RR^3$} ;
\draw (-0.3,-2.3) node{$\Omega$} ;
\draw (1.2,-2.3) node{\color{blue}$\had$} ;
\draw (0.2,2) node{$\Omega_T$} ;
\draw (0.1,0) node{$Q$} ;
\draw (-4.2,-2) node{$0$} ;
\draw (-4.2,1.75) node{$T$} ;

\draw (2.4,1.4) node{$\Qhad$} ;
\end{tikzpicture}
\caption{Moving domain $Q$.}
\label{fig:mov-dom}
\end{center}
\end{figure}
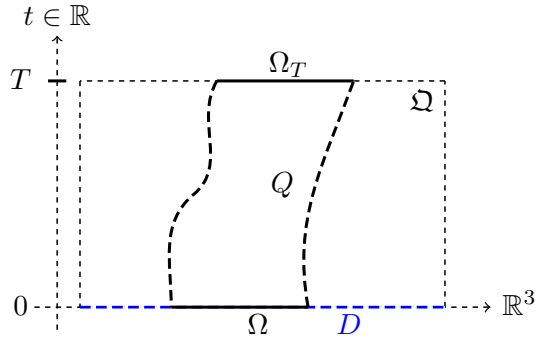
We need to describe a model of blood flows. 
As it was explained in the first part of the introduction, we shall describe blood as an incompressible non-Newtonian fluid, so that its motion satisfies  the following generalized Navier-Stokes 
equations:
\begin{equation}
\label{pbm:blood_flow}
\left\lbrace
\begin{aligned}
\frac{\partial \uu }{\partial t} + \uu \cdot \nabla \uu - \div ( \stress ( \Gs \uu ) ) + \gd \pr &= \bd{f}   && \text{ in } Q , \\
\div \uu &= 0   && \text{ in } Q , \\
\uu  &= \vel   && \text{ on }  \Sigma , \\ 
\uu(0) &= \uu_0   && \text{ in }  \Omega  ,
\end{aligned}
\right.
\end{equation}
where the non-Newtonian stress tensor $\stress ( \Gs \uu )$ is given in \eqref{eq:def:stress}, $\uu_0$ is the initial velocity field, $\bd{f}$ the body force, and $\pi$ the pressure. 

\smallskip

\textit{\underline{Bibliographical remarks:}}
Concerning the cylindrical case for Problem \eqref{pbm:blood_flow}-\eqref{eq:def:stress}, the existence of weak solutions was  proved in \cite{lions1969qques_methodes, ladyzhenskaya1969theory_viscous-incomp-flow} 
for $q \geq 11/5$, 
providing uniqueness as long as $q >5/2$. 
The existence result 
was improved to $q > 9/5$
for 
space periodic boundary conditions,  
together with 
uniqueness for $q \geq 11/5$ in \cite{malek_necas_ruzicka1996weak_meas_sol_evolPDE}.  
Uniqueness for $q <  11/5$ remains an open problem. 
For the homogeneous Dirichlet boundary conditions,    
existence was provided in \cite{wolf2007extist_wsol} for $q >8/5$ with the $L^\infty$-truncation method, and  
in \cite{diening_ruzicka_wolf2010existence} for $q >6/5$ with the Lipschitz-truncation method.  
See also
\cite{diening_ruzicka_wolf2010existence}  for further references on this topic.   
Concerning the non-cylindrical case -- moving domains --,  
existence  was proved in  \cite{sokolowski_stebel_2014shapo_nonnewt_t-dep} for $q \geq 11/5 $ with a penalization method,   
and 
extended to  
$q > 6/5$  
in \cite{nagele_ruzicka2018} with the solenoidal 
Lipschitz-truncation method.

\smallskip

Having modeled the blood flows, we can now define the shape optimization problem, and write the existence of minima, which is the main subject of this article.

\subsection{Shape optimization problem and main result}
\label{sec:shapo:def}

The shape optimization of Navier-Stokes 
equations 
has been widely addressed since the late 90s. 
For references on 
shape sensitivity  
of unsteady incompressible case, see among others \cite{fischer_lindemann_ulbrich2_2017frechet}, 
tackling domain variations of low regularity. 
For incompressible Navier-Stokes  
in moving domains, the shape differentiability is studied in 
\cite{moubachir_zolesio2006moving-shape-analys_control}, 
and in \cite{sokolowski_stebel_2011shape-sensitiv_unsteady} 
for shear-thickening fluids in 2D.

\smallskip

We investigate the following family of shape optimization problems: 
\begin{equation}
\label{eq:pbm:minimization:abstract}
\begin{aligned}
& \text{Find } ( \Omega^* , \vel^* ) \in  \Opn \times \V \text{ such that: }  \\
& \SHPfun ( \Omega^*  ,   \vel^*   )  \leq \SHPfun ( \Omega  ,   \vel   ) , \quad \forall ( \Omega , \vel ) \in  \Opn \times \V , 
\end{aligned}
\end{equation}
where $\SHPfun ( \Omega  ,   \vel   )$ is a shape functional which depends on the solution $\uu$ of blood problem \eqref{pbm:blood_flow}-\eqref{eq:def:stress} defined on the domain $Q$ associated to $( \Omega  ,   \vel   ) \in \Opn \times \V$.  
Here $\Opn $ and $ \V$ are respectively the classes of admissible initial domains $\Omega$ and velocity fields $\vel$, which will be specified later in Section \ref{sec:shapo:setting}.

Let $( \Omega,   \vel   ) \in \Opn \times \V$, we define $\nP ( \Omega,   \vel   ) $ as the set of solutions to the blood problem \eqref{pbm:blood_flow}-\eqref{eq:def:stress}.   
We will see that in the framework of non-Newtonian fluids where the stress tensor is expressed by \eqref{eq:def:stress} for some $q>6/5$, we don't have the uniqueness of the solutions.
Thus, we define the shape functional as follows:
\begin{equation}
\label{eq:def:SHPfun}
\SHPfun ( \Omega  ,   \vel   ) := \inf_{ \uu  \in \nP ( \Omega  ,   \vel   )} \lbrace  \shpFUN ( \Omega  ,   \vel ; \uu  ) \rbrace ,
\end{equation}
where the following assumptions are required for the functional $\shpFUN ( \Omega  ,   \vel ; \uu   )$.

\smallskip

\begin{enumerate}[label=(J.\arabic*), leftmargin=8ex,
]
\item  \label{item:assump:j:1}
For any $( \Omega  ,   \vel   )\in \Opn \times \V$, and any $  \uu   \in \nP ( \Omega  ,   \vel   )$,  
it holds 
$ 
0 \leq \shpFUN ( \Omega  ,   \vel ; \uu   ) 
$. 
\item  \label{item:assump:j:2} 
The functional $\shpFUN$ is lower semicontinuous.
Namely, let $(\Omega^\pmS , \vel^\pmS ) \longrightarrow ( \Omega , \vel )$ be a converging sequence in $\Opn \times \V$, and let $  \uun   \in \nP (\Omega^\pmS , \vel^\pmS )$ be any associated sequence of solution converging to some $  \uu   \in \nP (\Omega , \vel )$ (the definitions of the respective convergences are given in Definition~\ref{def:cvg:dom-veloc} and Theorem~\ref{thm:shapo-cont:blood}). Then:
\begin{equation*}
\shpFUN ( \Omega  ,   \vel  ; \uu    ) 
\leq \liminf_{\pmS \rightarrow \infty}
\shpFUN ( \Omega^\pmS  ,   \vel^\pmS ; \uun   ) .
\end{equation*}
\end{enumerate}

First, we can notice that from assumption \ref{item:assump:j:1}, $\SHPfun ( \Omega  ,   \vel   )$ is well-defined. 
Then we will see in Section \ref{sec:exist-min} that assumptions \ref{item:assump:j:1} and \ref{item:assump:j:2} are used to prove existence of a minimum for Problem \eqref{eq:pbm:minimization:abstract}, using the \textit{Direct Method of Calculus of Variations}.  
This main result of the present article is given below.

\begin{theorem}
\label{thm:main-result}
Let $\SHPfun ( \Omega  ,   \vel   ) $ be the shape functional defined in \eqref{eq:def:SHPfun}, where $\shpFUN$ satisfies assumptions \ref{item:assump:j:1}-\ref{item:assump:j:2}.   
Then, the shape optimization problem \eqref{eq:pbm:minimization:abstract} admits at least one solution $( \Omega^* , \vel^* ) $ in the class of admissible domains and velocities $ \Opn \times \V$ given by \eqref{eq:def:class:O}-\eqref{eq:def:class:V}.  
\end{theorem}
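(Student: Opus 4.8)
The plan is to apply the Direct Method of the Calculus of Variations to the functional $\SHPfun$ over the class $\Opn \times \V$. First I would fix a minimizing sequence $(\Omega^{\pmS}, \vel^{\pmS}) \in \Opn \times \V$ such that $\SHPfun(\Omega^{\pmS}, \vel^{\pmS}) \to \inf_{(\Omega, \vel) \in \Opn \times \V} \SHPfun(\Omega, \vel) =: \mathfrak{m}$, which is a finite nonnegative number by assumption \ref{item:assump:j:1} (and I should first check the infimum is not $+\infty$ by exhibiting at least one admissible pair with a finite value, using that $\nP(\Omega, \vel)$ is nonempty — this is guaranteed by the existence theorem of \cite{nagele_ruzicka2018} quoted in the introduction for $q > 6/5$). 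The next step is a compactness argument: using the precise definition of the classes $\Opn$ and $\V$ in \eqref{eq:def:class:O}-\eqref{eq:def:class:V} (uniform bounds that make these classes sequentially compact for the convergence of Definition~\ref{def:cvg:dom-veloc}), I would extract a subsequence, not relabeled, with $(\Omega^{\pmS}, \vel^{\pmS}) \to (\Omega^*, \vel^*)$ in $\Opn \times \V$, so in particular $(\Omega^*, \vel^*)$ is again admissible.

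Next I would use the definition \eqref{eq:def:SHPfun} of $\SHPfun$ as an infimum over the solution set. For each $\pmS$, the infimum defining $\SHPfun(\Omega^{\pmS}, \vel^{\pmS})$ is either attained or almost attained: I would pick $\uun \in \nP(\Omega^{\pmS}, \vel^{\pmS})$ with $\shpFUN(\Omega^{\pmS}, \vel^{\pmS}; \uun) \le \SHPfun(\Omega^{\pmS}, \vel^{\pmS}) + 1/\pmS$ (this sidesteps any existence-of-minimizer question for $\shpFUN$ itself). Now the shape-continuity result (Theorem~\ref{thm:shapo-cont:blood}, proved in Section~\ref{sec:continuity:blood}) is what lets me pass to the limit: along a further subsequence, $\uun$ converges in the appropriate sense to some $\uu^* \in \nP(\Omega^*, \vel^*)$. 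This is exactly the step that required extending the range to $6/5 < q < 11/5$, and it is the substantive input; without it the argument collapses, since one cannot otherwise guarantee that the limit of solutions on the moving domains $Q^{\pmS}$ is a solution on the limit moving domain $Q^*$.

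Then I would chain the semicontinuity estimates. By assumption \ref{item:assump:j:2},
\begin{equation*}
\shpFUN(\Omega^*, \vel^*; \uu^*) \le \liminf_{\pmS \to \infty} \shpFUN(\Omega^{\pmS}, \vel^{\pmS}; \uun) \le \liminf_{\pmS \to \infty} \left( \SHPfun(\Omega^{\pmS}, \vel^{\pmS}) + \tfrac{1}{\pmS} \right) = \mathfrak{m}.
\end{equation*}
On the other hand, since $\uu^* \in \nP(\Omega^*, \vel^*)$, the definition \eqref{eq:def:SHPfun} gives $\SHPfun(\Omega^*, \vel^*) \le \shpFUN(\Omega^*, \vel^*; \uu^*) \le \mathfrak{m}$. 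Combined with $\SHPfun(\Omega^*, \vel^*) \ge \mathfrak{m}$ (the infimum over the class), this yields $\SHPfun(\Omega^*, \vel^*) = \mathfrak{m}$, so $(\Omega^*, \vel^*)$ solves \eqref{eq:pbm:minimization:abstract}.

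The main obstacle is entirely in the compatibility of the convergences: one needs the convergence of the domain-velocity pairs $(\Omega^{\pmS}, \vel^{\pmS}) \to (\Omega^*, \vel^*)$ provided by compactness of $\Opn \times \V$ to be precisely the hypothesis under which Theorem~\ref{thm:shapo-cont:blood} delivers a convergent subsequence of solutions with limit in $\nP(\Omega^*, \vel^*)$, and for that same mode of convergence of $\uun \to \uu^*$ to be the one under which $\shpFUN$ is lower semicontinuous in \ref{item:assump:j:2}. Once Theorem~\ref{thm:shapo-cont:blood} and the definitions of $\Opn$, $\V$ are in place, the remaining steps are the routine bookkeeping of the Direct Method (minimizing sequence, diagonal extraction of subsequences, lower semicontinuity, identification of the limit as admissible). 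I would also take care that all extractions are compatible — domains first, then solutions — and that the near-minimizers $\uun$ are chosen after the domain subsequence is fixed.
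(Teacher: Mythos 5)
Your proposal is correct and follows essentially the same route as the paper: the Direct Method, compactness of $\Opn\times\V$ (Proposition~\ref{thm:compactness:geom}), selection of near-minimizers in $\nP(\Omega^\pmS,\vel^\pmS)$, the shape-continuity Theorem~\ref{thm:shapo-cont:blood} to pass the solutions to the limit, and assumptions \ref{item:assump:j:1}--\ref{item:assump:j:2} to close the chain of inequalities. The only (harmless) difference is that you let the near-minimization error be $1/\pmS$ along the domain sequence, collapsing the paper's two-parameter argument (fix $\pmE$, let $\pmS\to\infty$, then $\pmE\to\infty$) into a single limit.
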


\textbf{\textit{\underline{Outline:}}}
For proving Theorem~\ref{thm:main-result},   
the article is organized as follows.
In Section~\ref{sec:settings}, we present the shape optimization settings of the geometry and the model, we consider a minimizing sequence of moving domains $((\Omega^\pmS , \vel^\pmS ))_{\pmS \geq 0}$, and see that we can extract a converging subsequence. 
Then Section~\ref{sec:continuity:blood} is devoted to proving the convergence of associated sequences of blood solutions.  
The proof of Theorem~\ref{thm:main-result} is finally written in Sections \ref{sec:exist-min}. 
Appendix \ref{sec:density}, for its part, contains the proof of the density result from  Proposition~\ref{thm:density:H1L2capLqVq}, required for the shape continuity of blood flows. 
Before starting, let us present a concrete example of a functional that motivated the present study: hemolysis minimization.

\begin{example}[Hemolysis minimization]
\label{thm:ex:hemolysis}
The primary motivation for this work is to find shapes that minimize RBCs damage in blood flows.  
The \emph{hemolysis index}, denoted by $\hemo (\uu)$, is a scalar field introduced to measure RBCs damage, and can be defined in several ways.  
Different models stand, depending on whether the \emph{viscous stress}  or the \emph{strain} is considered as a source of hemolysis production (stress/strain-based models), and depending on the chosen scalar measure of these tensors. 
The original formulation of the hemolysis index is the following (see \cite{yu2017review} for a~review):  
\begin{equation*}
\hemo ( \uu )  = C  \left\lv \stress( \Gs \uu ) \right\rv^\alpha t^\beta ,
\end{equation*}
where $t$ is the time exposure, 
$\stress( \Gs \uu )$ is the viscous stress tensor defined in \eqref{eq:def:stress}, 
and  $C$, $\alpha$ and $\beta$ are positive constants given by the experimental literature. 
Let us define for any $1 \leq  r < \infty$:
\begin{align*}
\shpFUN_r ( \Omega  ,   \vel  ; \uu    ) 
&:= \lV \hemo ( \uu ) \rV_{L^rL^r (t)}^r 
:= \int_0^T \int_{\Omega_t} \lv \hemo ( \uu ) \rv^r \dm x \dm t ,
\end{align*} 
and the related shape functionals: 
$
\SHPfun_r ( \Omega  ,   \vel   ) := \inf_{ \uu \in \nP ( \Omega  ,   \vel   )} \lbrace  \shpFUN_r ( \Omega  ,   \vel ; \uu   ) \rbrace 
$.  
We straightforwardly notice that $\hemo ( \uu )$ is at least as regular as $\left\lv \stress( \Gs \uu ) \right\rv^\alpha$ because $\beta >0$ so that $t^\beta \in L^\infty (\Qhad)$. 
Let $(\Omega^\pmS , \vel^\pmS ) \longrightarrow ( \Omega , \vel )$, 
and let $( \uun )_{\pmS \geq 0}$ (resp. $\uu$) be the associated solutions to Problem \eqref{pbm:blood_flow}-\eqref{eq:def:stress} written on $(\Omega^\pmS , \vel^\pmS ) $ (resp. $(\Omega , \vel) $). 
It will be shown 
-- see \eqref{eq:weaklim:tild:w:5} and 
Section~\ref{sec:identif-stress} --
that there exists a subsequence s.t. 
$\stress ( \Gs \extun ) \rightharpoonup \stress ( \Gs \extu )$ weakly in $L^{q'} ( \Qhad )$ and a.e. in $\Qhad$, 
where $\extun$ (resp. $\extu$) denotes the extension of $\uun$ (resp. $\uu$) to $\Qhad$ by $\vel^\pmS$ (resp. $\vel$),  
and $q' \in (2,6)$ is the conjugate of the rheological exponent $q$ in \eqref{eq:def:stress}. 
Thus for any $\alpha \leq q'$,
and any $1 \leq  r \leq q' / \alpha$, 
we have by definition that $\shpFUN_r ( \Omega  ,   \vel  ; \uu    )$ is well defined and satisfies Assumption \ref{item:assump:j:1}. 
From the weak and a.e. convergences written above, it also holds $\hemo (  \extun ) \rightharpoonup \hemo (   \extu )$ weakly in $L^{r} ( \Qhad )$. 
By definition of the extensions, we have $\shpFUN_r ( \Omega  ,   \vel  ; \uun    ) 
= \int_0^T \int_{\had} \lv \hemo ( \extun ) \rv^r \dm x \dm t 
-  \int_0^T \int_{\had \setminus \Omega_t} \lv \hemo ( \vel^{\pmS} ) \rv^r \dm x \dm t $. 
From the strong convergence  $\vel^{\pmS} \rightarrow \vel$ (see Definition~\ref{def:cvg:dom-veloc}), 
and by lower semicontinuity of the norm for the weak convergence, Assumption \ref{item:assump:j:2} is satisfied as well. 
Then, from Theorem \ref{thm:main-result}, there exists $(\Omega^{*,r} , \vel^{*,r})  \in \Opn \times \V $ which minimizes $\SHPfun_r$, that is to say which minimizes the $L^r$-norm of the hemolysis index. 
It has to be noticed that this example fits to empirical values, with for instance $q \simeq 1.22$ (see \cite{farina_et-al2016non-newt}) and $\alpha \simeq 2.42$ (or $\alpha \simeq 1.99$, see \cite{yu2017review} for other values and references).

\end{example}

\section{Shape optimization settings}
\label{sec:settings}

First,  we define the classes of admissible initial domains and velocity fields 
and the associated notion of convergence.  
As is standard practice in shape optimization, these classes are chosen to  
enjoy compactness properties, and  to be  
regular enough for the resolution of the boundary value problems.  
Then, we introduce the functional framework for moving domains: the \emph{generalized Bochner spaces}.  
From this, we define what a weak solution is for the blood  problem.

\subsection{Optimization settings: moving domains}
\label{sec:shapo:setting}

Let $\had$ be a bounded open subset of $\RR^{\dn}$, and $c_{\Opn},c_{\V}>0$ be two   
constants. 
Denoting by $\nC ( c_{\Opn} )$ the class of open subsets of $\RR^{3}$ having the \emph{$c_{\Opn}$-cone property} (see \cite[Def. 2.4.1]{henrot_pierre2018shapo_geom}),  
we define the class of admissible initial domains:  
\begin{equation}
\label{eq:def:class:O} 
\Opn = \lbrace 
\Omega \in \nC ( c_{\Opn} ) 
\mid \overline{\Omega} \subset \had  
\rbrace ,
\end{equation}
and the class of admissible velocity fields:%
\begin{equation}
\V = 
\lbrace \vel \in C^{1,1} ( [0,T] , C^{1,1}_{0,\sol} (D)^3  ) 
\mid 
\lV \vel \rV_{ C^{1,1}   
} \leq c_{\V} \rbrace ,
\label{eq:def:class:V}
\end{equation}
where  
$ C^{1,1} ( [0,T] , C^{1,1}_{0,\sol} (D)^3  ) :=
\lbrace \vel \in C^{1,1} ( [0,T]\times \overline{D} )^3  
\mid 
\supp \vel  \subset [0,T] \times D , \ 
\div_{\bd{x}} \vel = 0 \rbrace $.

Let us motivate these definitions. 
First, it is equivalent for $\Omega$ to have the cone property and to have a Lipschitz boundary \cite[Thm. 2.4.7]{henrot_pierre2018shapo_geom}, 
which is enough to solve the PDEs in this article. 
Secondly, for $\vel \in \V$, the associated flows $\bd{\varphi}$ given by \eqref{eq:def:transfo-assoc-velocity} are $C^1$-diffeomorphisms \cite[Chap. 4, Thm. 4.4]{delfour_zolesio2011shapesNgeom},  
so that each layer $\Omega_t$ has also a Lipschitz boundary \cite[Thm. 4.1]{hofmann_mitrea_taylor2007geom-dom}. 
For $\vel$, we need uniform $C^{1,1}$ regularity 
for compactness reason, see below.

Now let $((\Omega^{\pmS} , \vel^{\pmS}))_{{\pmS} \geq 0}$ be a sequence in $\Opn \times \V$. 
For any $ \pmS \geq 0$, we define the transformation $\bd{\varphi}^\pmS$ as the unique solution to problem \eqref{eq:def:transfo-assoc-velocity} associated to $\vel^\pmS$. 
Thus we define as in \eqref{eq:def:Omegat}:
\begin{equation}
\label{eq:def:Omegat-n}
\Omega_t^\pmS := \bd{\varphi}^\pmS ( t , \Omega^\pmS ) , \quad 
Q^\pmS := \bigcup_{t \in (0,T)} \lbrace t \rbrace  \times \Omega_t^\pmS , \quad
\Sigma^\pmS := \bigcup_{t \in (0,T)} \lbrace t \rbrace  \times \partial\Omega_t^\pmS .
\end{equation}
\begin{definition}\label{def:cvg:dom-veloc}
We say that $((\Omega^{\pmS} , \vel^{\pmS}))_{{\pmS} \geq 0}$ converges to $(\Omega , \vel)$ in $\Opn \times \V$, and we will simply write $(\Omega^\pmS , \vel^\pmS ) \longrightarrow ( \Omega , \vel )$, if the two following convergences hold: 
\begin{enumerate}
\item \label{def:cvg:dom-veloc:1}
$\Omega^\pmS \overset{\nH}{\longrightarrow} \Omega$ in Hausdorff topology (see \cite[Sec. 2.2.3]{henrot_pierre2018shapo_geom} for a definition),

\item \label{def:cvg:dom-veloc:3} 
$\vel^\pmS \longrightarrow \vel$ in $C^{1} ( [0,T] \times \overline{\had} )$.
\end{enumerate}
\end{definition}

From this definition  we can state the following  classical compactness result.  

\begin{proposition} 
\label{thm:compactness:geom}
Let $((\Omega^\pmS , \vel^\pmS))_{\pmS \geq 0} \subset \Opn \times \V$. 
Then there exists $(\Omega^* , \vel^*) \in \Opn \times \V$, and a subsequence  $((\Omega^{\pmS'} , \vel^{\pmS'}))_{\pmS' \geq 0}$ such that $(\Omega^{\pmS'} , \vel^{\pmS'} ) \longrightarrow ( \Omega^* , \vel^* )$ as $\pmS' \rightarrow +\infty$. 
\end{proposition}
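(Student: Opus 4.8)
The plan is to prove the compactness result in two independent parts, one for the domain sequence $(\Omega^\pmS)$ and one for the velocity sequence $(\vel^\pmS)$, and then combine them. The two convergences in Definition~\ref{def:cvg:dom-veloc} are essentially decoupled at the level of extracting subsequences, so I would handle them separately and take a common subsequence at the end.

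\textbf{Step 1: compactness of the domains.} Each $\Omega^\pmS$ belongs to $\nC(c_\Opn)$ and satisfies $\overline{\Omega^\pmS}\subset\had$ with $\had$ bounded. The class of open subsets of a fixed bounded set enjoying a uniform $c_\Opn$-cone property is compact for the Hausdorff complementary topology; this is the classical theorem of Chenais (see \cite[Thm.~2.4.10]{henrot_pierre2018shapo_geom}). Hence there is a subsequence (not relabelled) and an open set $\Omega^*$ with $\Omega^\pmS\overset{\nH}{\longrightarrow}\Omega^*$, and moreover $\Omega^*$ again has the $c_\Opn$-cone property and $\overline{\Omega^*}\subset\had$, so $\Omega^*\in\Opn$. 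This gives convergence~\eqref{def:cvg:dom-veloc:1}.

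\textbf{Step 2: compactness of the velocity fields.} The sequence $(\vel^\pmS)$ is bounded in $C^{1,1}([0,T]\times\overline\had)^3$ by $c_\V$. Since $[0,T]\times\overline\had$ is compact, the embedding $C^{1,1}([0,T]\times\overline\had)^3 \hookrightarrow C^{1}([0,T]\times\overline\had)^3$ is compact by the Arzelà–Ascoli theorem (the first derivatives are uniformly Lipschitz, hence equicontinuous, and uniformly bounded). So a further subsequence converges in $C^1([0,T]\times\overline\had)^3$ to some limit $\vel^*$. It remains to check $\vel^*\in\V$: the bound $\|\vel^*\|_{C^{1,1}}\le c_\V$ passes to the limit because the $C^{1,1}$ (Lipschitz-of-gradient) seminorm is lower semicontinuous under $C^1$ convergence; the support condition $\supp\vel^*\subset[0,T]\times D$ survives because $C^1$ convergence is uniform and the supports stay in a fixed compact subset (this is where the closed support condition in the definition of $C^{1,1}_{0,\sol}(D)$ is used — one should note that if $D$ is not compactly containing the supports uniformly, a mild argument via equicontinuity near $\partial D$ is needed); and $\div_x\vel^*=0$ passes to the limit since $\gd\vel^\pmS\to\gd\vel^*$ uniformly. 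This gives convergence~\eqref{def:cvg:dom-veloc:3}.

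\textbf{Step 3: conclusion.} Intersecting the two subsequences yields a single subsequence $((\Omega^{\pmS'},\vel^{\pmS'}))$ along which both convergences hold, i.e.\ $(\Omega^{\pmS'},\vel^{\pmS'})\longrightarrow(\Omega^*,\vel^*)$ in the sense of Definition~\ref{def:cvg:dom-veloc}, with $(\Omega^*,\vel^*)\in\Opn\times\V$. \emph{The main obstacle} is really only bookkeeping: ensuring the limit objects stay in the admissible classes, i.e.\ that the cone property is stable under Hausdorff convergence (cited) and that the uniform $C^{1,1}$ bound and the support/divergence constraints are stable under $C^1$ convergence. No estimate here is deep; the content is entirely in invoking Chenais' compactness theorem and Arzelà–Ascoli correctly, and in verifying closedness of the constraint sets.
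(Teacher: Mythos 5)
Your proof is correct and follows essentially the same route as the paper: compactness of $\Opn$ in the Hausdorff topology via the uniform cone property (Henrot--Pierre, Thm.~2.4.10), Arzel\`a--Ascoli for the $C^{1,1}$-bounded velocities yielding $C^1$ convergence, and a check that the limit pair remains in $\Opn\times\V$. Your additional remarks on the closedness of the constraint sets (lower semicontinuity of the Lipschitz seminorm, stability of the support and divergence-free conditions) only flesh out what the paper leaves as ``it can be additionally shown that $\vel\in\V$''.
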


The compactness of $\Opn$ for the Hausdorff topology is well-known \cite[Thm. 2.4.10]{henrot_pierre2018shapo_geom}, 
and for $(\vel^{\pmS})_{\pmS \geq 0}\subset \V$, the uniform bound $\lV \vel^{\pmS} \rV_{C^{1,1}} \leq c_{\V}$ and 
Arzel\`{a}-Ascoli Theorem provide a subsequence 
$\vel^{\pmS'} \longrightarrow \vel$ in $C^1$-norm, and it can be additionally show that actually $\vel \in \V$.

As a consequence, we have the following important properties (see \cite{sokolowski_stebel_2014shapo_nonnewt_t-dep}).
Let $(\Omega^\pmS , \vel^\pmS ) \longrightarrow ( \Omega , \vel )$ in $\Opn \times \V$. 
Then for every cylinder $[T_1 , T_2] \times \overline{K} \subset Q$, where $K$ is an open subset $K\Ksub \had$ and $0<T_1<T_2<T$, there exists $\pmS_0\in\NN$ such that for every $\pmS \geq \pmS_0$:  
$ 
[T_1 , T_2] \times \overline{K} \subset Q^\pmS 
$.   
We give a slightly more general result, that we will need later on to consider compactly supported test functions for writing the weak formulation of blood flows for a sequence of converging moving domain $(( \Omega^\pmS , \vel^\pmS ))_{\pmS\geq 0}$. Let $\epsilon >0$, we define the two following open sets:
\begin{equation}
\Omega_{(\epsilon)}  
:= \left\lbrace 
x \in \Omega   \;\big\vert\;  \dist (x , \partial\Omega) > \epsilon 
\right\rbrace  ,  
\quad
\text{and}
\quad
\Omega^{(\epsilon)} 
:= \left\lbrace 
x \in \Omega   \;\big\vert\;  \dist (x , \partial\Omega) < \epsilon 
\right\rbrace  .
\label{eq:def:int-dom}
\end{equation}

\begin{lemma}
\label{thm:lem:compact-in-Q}
Let $((\Omega^\pmS , \vel^\pmS))_{\pmS \geq 0} $ be a sequence converging to $(\Omega , \vel)$ in $\Opn \times \V$.
Let $Q_K \subset \underline{Q} $ be a compact subset, where $\underline{Q}$ is defined by:
$
\underline{Q} :=  
\bigcup_{t \in [0,T]} \lbrace t \rbrace  \times \Omega_t 
$.
Then, there exists an integer $\pmS_0$, such that for any $\pmS \geq \pmS_0$, we have
$
Q_K \subset \underline{Q}^\pmS :=  \bigcup_{t \in [0,T]} \lbrace t \rbrace  \times \Omega^\pmS_t 
$.
\end{lemma}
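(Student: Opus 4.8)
The statement is a compactness-type assertion: a compact subset $Q_K$ of the (closed) limiting moving domain $\underline{Q}$ is eventually contained in each $\underline{Q}^\pmS$ along a convergent sequence. The natural approach is by contradiction, using a quantitative "interior distance" function on $Q_K$ together with the two convergences in Definition~\ref{def:cvg:dom-veloc}. The plan is to reduce everything to the static picture at time $0$ via the flows $\bd{\varphi}^\pmS$ and $\bd{\varphi}$: writing $\Omega^\pmS_t = \bd{\varphi}^\pmS(t,\Omega^\pmS)$ and $\Omega_t = \bd{\varphi}(t,\Omega)$, a point $(t,x)\in\underline{Q}$ corresponds to $y := \bd{\varphi}(t,\cdot)^{-1}(x) \in \overline{\Omega}$, and membership $(t,x)\in\underline{Q}^\pmS$ is equivalent to $\bd{\varphi}^\pmS(t,\cdot)^{-1}(x) \in \overline{\Omega^\pmS}$. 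So I must transport the point $x$ back by the $\pmS$-th flow and check it lands in $\overline{\Omega^\pmS}$.

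First I would set $\delta := \dist(Q_K, \partial\underline{Q}) $ relative to the lateral boundary $\Sigma$ — more precisely, since $Q_K\subset\underline{Q}$ is compact and $\underline{Q}$ is (relatively) open in $[0,T]\times\RR^3$ in the spatial variable, there is $\delta>0$ such that for every $(t,x)\in Q_K$ the ball $B(x,\delta)\subset\Omega_t$, equivalently $\bd{\varphi}(t,\cdot)^{-1}(x) \in \Omega_{(\delta')}$ for some $\delta'>0$ (using that the flows $\bd{\varphi}(t,\cdot)$ are $C^1$-diffeomorphisms with derivatives bounded uniformly in $t$, thanks to $\vel\in\V$; this is exactly where the uniform $C^{1,1}$ bound is used). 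Next, from convergence~\eqref{def:cvg:dom-veloc:3}, $\vel^\pmS\to\vel$ in $C^1([0,T]\times\overline\had)$, so by continuous dependence of the ODE~\eqref{eq:def:transfo-assoc-velocity} on the vector field, $\bd{\varphi}^\pmS\to\bd{\varphi}$ uniformly on $[0,T]\times\overline\had$, and likewise for the inverse flows; hence for $\pmS$ large, $\lVert (\bd{\varphi}^\pmS)^{-1} - \bd{\varphi}^{-1}\rVert_{\infty} < \delta'/2$ uniformly in $t$. Then from convergence~\eqref{def:cvg:dom-veloc:1}, $\Omega^\pmS \to \Omega$ in the Hausdorff topology, which gives (for the complements, or via the Hausdorff complementary convergence that is part of the standard package for the cone property class) that $\Omega_{(\delta')} \subset \Omega^\pmS$ for $\pmS$ large. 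Combining: for $(t,x)\in Q_K$ and $\pmS$ large, $(\bd{\varphi}^\pmS)^{-1}(t,x)$ lies within $\delta'/2$ of $\bd{\varphi}^{-1}(t,x)\in\Omega_{(\delta')}$, hence in $\Omega_{(\delta'/2)}\subset\Omega^\pmS$, so $(t,x)\in\underline{Q}^\pmS$. Since all the largeness thresholds are uniform in $(t,x)\in Q_K$, a single $\pmS_0$ works.

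The main obstacle is making the Hausdorff convergence of the \emph{initial} domains interact correctly with the boundary: Hausdorff convergence of $\Omega^\pmS$ to $\Omega$ does not by itself give $\Omega_{(\delta')}\subset\Omega^\pmS$ — one needs the convergence of complements (equivalently convergence in the sense of characteristic functions / Hausdorff convergence of closures \emph{and} of complements), which does hold on the class $\nC(c_\Opn)$ by \cite[Thm. 2.4.10]{henrot_pierre2018shapo_geom} and is implicit in Proposition~\ref{thm:compactness:geom}. A clean way to phrase it: for a compact $K'\Ksub\Omega$ one has $K'\subset\Omega^\pmS$ eventually, by contradiction using that a sequence $x_\pmS\in K'\setminus\Omega^\pmS$ would subconverge to some $x\in K'$ with $x\in\overline{\RR^3\setminus\Omega^\pmS}\to \overline{\RR^3\setminus\Omega}$, contradicting $x\in\Omega$ open. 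I would apply this with $K' = \overline{\Omega_{(\delta'/2)}}\Ksub\Omega$. The second, more routine point to be careful about is the uniform-in-$t$ control of the flow maps and their inverses, which follows from the Picard–Lindelöf estimates and Gronwall, using the uniform bound $\lVert\vel^\pmS\rVert_{C^{1,1}}\le c_\V$; I would cite \cite{delfour_zolesio2011shapesNgeom} and \cite{sokolowski_stebel_2014shapo_nonnewt_t-dep} for this and not reprove it. Everything else is bookkeeping: choosing $\delta,\delta'$, and noting that $Q_K$ compact makes the inclusion $B(x,\delta)\subset\Omega_t$ hold with a uniform $\delta$.
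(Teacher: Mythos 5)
Your proposal is correct and follows essentially the same route as the paper's proof: extract a uniform interior margin from compactness of $Q_K$, pull back to time zero using the uniform Lipschitz bounds on the flows $\bd{\varphi}$, use the $C^1$ convergence $\vel^\pmS\to\vel$ to get uniform closeness of $(\bd{\varphi}^\pmS)^{-1}$ to $\bd{\varphi}^{-1}$, and finish with the Hausdorff convergence of the initial domains to absorb a compact subset of $\Omega$ into $\Omega^\pmS$. The only differences are presentational (you argue pointwise on $(t,x)\in Q_K$ where the paper chains set inclusions through $\Omega_{(\delta)}$ and its flow images), plus a harmless slip where you write $\overline{\Omega^\pmS}$ instead of $\Omega^\pmS$ for the membership criterion.
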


\begin{proof}
Let  $\varepsilon , \delta >0$, we start defining the sets $K_\delta := \Omega_{(\delta)} $,  
$
\underline{Q}_{K_\delta}:= 
\bigcup_{t \in [0,T]} \lbrace t \rbrace  \times 
\bd{\varphi}(t ,K_\delta )  
$,  
and  
$
\underline{Q}_\varepsilon:=  \bigcup_{t \in [0,T]} \lbrace t \rbrace  \times \Omega_{t(\varepsilon)}
$,  
recalling that $\Omega_{t(\varepsilon)}$ is defined by \eqref{eq:def:Omegat} and \eqref{eq:def:int-dom}.
Because $Q_K$ is compact and  $Q_K \subset \underline{Q} $, there exists $\varepsilon >0$ such that $Q_K \subset \underline{Q}_\varepsilon$.
Let $\delta < \varepsilon / c_{\V}$.  
In view of $\lv \bd{\varphi}(t,X) - \bd{\varphi}(t,Y) \rv \leq c_{\V} \lv X - Y \rv$ holding $\forall t\in [0,T]$, $\forall X,Y \in \Omega$,  
one can check that $\bd{\varphi}^{-1}(t, \Omega_{t(\varepsilon)} ) \subset K_\delta$, and then 
$
\bd{\varphi}(t,\bd{\varphi}^{-1}(t, \Omega_{t(\varepsilon)} ) ) \subset \bd{\varphi}(t ,K_\delta )
$,  
so that finally $\underline{Q}_\varepsilon \subset \underline{Q}_{K_\delta}$.
Then we can show that there exists $N_0 \in \NN$ such that for all $\pmS  \geq N_0$, 
it holds $\underline{Q}_{K_\delta} \subset \underline{Q}^{\pmS}_{K_{\delta/2}}  := \bigcup_{t \in [0,T]} \lbrace t \rbrace  \times 
\bd{\varphi}^\pmS(t ,K_{\delta/2} )$ 
(from the convergence  
$\vel^\pmS \longrightarrow \vel$ 
in $C^{1} ( [0,T] \times \overline{\had} )$).  
Finally, from the Hausdorff convergence  
$\Omega^\pmS \longrightarrow \Omega$, 
there exists a $N_1 \in \NN$ such that for all $\pmS \geq N_1$, the inclusion $\overline{K_{\delta/2}} \Ksub \Omega^\pmS$ holds, which implies $\underline{Q}^\pmS_{K_{\delta/2}} \subset \underline{Q}^\pmS$, and finishes the proof by setting $\pmS_0 := \max \lbrace N_0 , N_1 \rbrace  $. 
\end{proof}

\subsection{Functional settings}
\label{sec:func:setting}

We recall the definition of \emph{generalized Bochner spaces} (see \cite{nagele_ruzicka2018}), for a time-dependent family of separable Banach spaces $(B(t))_{t\in [0,T]}$ such that $B(t) \hookrightarrow L^1 ( \Omega_t )$, and $1 \leq r \leq \infty$. 
We define: 
\begin{multline*}
L^r ( (0,T) , B(t) ) := \Big\lbrace
u \in L^1( Q ) \;\Big\vert\;  u(t) \in B(t) \text{ for a.e. } t \in ( 0,T ) , 
\  t\mapsto \lV  u(t) \rV_{B(t)} \in L^r ((0,T))
\Big\rbrace 
\end{multline*}
and we define its norm as follows:
\begin{equation*}
\label{eq:def:norm:gnrl-Bochner}
\lV u \rV_{L^r_{0,T} B(t)} := \bigg( 
\int_0^T \lV u \rV_{B(t)}^r \dm t 
\bigg)^{\frac{1}{r}} .
\end{equation*} 
Let $u \in B ( t ) $, and $v $ belong to the dual space $B(t)'$.
In the following, we will write the duality product between $u$ and $v$ as: 
$\langle v , u \rangle_{B(t)}$. 
In the particular case where $u,v \in L^2 (\Omega_t)$, we will write:  
$ 
( v,u  )_{L^2 (\Omega_t)} := \int_{\Omega_t} v (x) u (x)  \dm x 
$.  
For $d$-dimensional vector functions $\uu \in L^r ( (0,T) , B(t)^d )$ and $\vv (t) \in (B(t)^d)'$, the norm and duality product will simply be written $\lV \uu \rV_{L^r_{0,T} B(t)}$ and $\langle \vv (t) , \uu (t) \rangle_{B(t)}$. 
For the sake of readability, 
when no ambiguity, these spaces will simply be denoted as  
$
L^r B(t)^d := L^r ( (0,T) , B(t)^d ) 
$.

Now we define the \emph{generalized time derivative} of $u \in L^r B(t)$ as follows: $\partial_t u \in \D ' (Q) $ such that 
$
\langle \partial_t u , \eta \rangle := -\langle  u , \partial_t \eta \rangle
$
for all $\eta \in \D (Q)$, 
recalling  that $L^r B(t) \inj L^1 (Q)$. 
From there, we define:  
$  
H^1( (0,T) , L^2 (\Omega_t) ) := 
\left\lbrace
u \in L^2 ( (0,T) , L^2( \Omega_t) ) \mid \partial_t  u \in L^2 ( (0,T) , L^2( \Omega_t ) )
\right\rbrace 
$.

We finally introduce the Banach spaces $B(t)$ involved in the present study. 
We recall that 
$\Xq ( \Omega_t )^3$ and $\Yq ( \Omega_t )^3$ denote the closure of
$\lbrace \uu \in C^\infty (\Omega_t)^3 \mid \div \uu = 0 \rbrace$ in $\lV\cdot\rV_{W^{1,p}}$ and $\lV\cdot\rV_{L^{p}}$ norms respectively, 
and
$\Vq ( \Omega_t )^3$ and $\Hq ( \Omega_t )^3$ are the closure of
$\lbrace \uu \in C^\infty_0 (\Omega_t)^3 \mid \div \uu = 0 \rbrace$ in $\lV\cdot\rV_{W^{1,p}}$ and $\lV\cdot\rV_{L^{p}}$ norms respectively.
Then we define:
\begin{equation}
\begin{aligned}
\espX_q (\had) &:=
\left\lbrace 
\uu \in H^1 ( (0,T) , L^2  (\had)^3 ) \cap L^q ( (0,T) , \Vq (\had)^3 ) \ \big| \ \uu ( T, \cdot ) = 0
\right\rbrace  , \\
\espX_q^\pmS & :=
\left\lbrace 
\uu \in H^1 ( (0,T) , L^2 ( \Omega^\pmS_t )^3 ) \cap L^q ( (0,T) , \Vq ( \Omega^\pmS_t)^3 ) \ \big| \ \uu ( T, \cdot ) = 0
\right\rbrace  ,  
\end{aligned}
\label{eq:def:space:XqD}
\end{equation}
and we shall simply write $\espX_q$ when $\Omega^\pmS_t$ is replaced by $\Omega_t$. 
The condition $\uu ( T, \cdot ) = 0$ makes sense because it can be shown that the time evaluation is well-defined for any $\uu \in H^1( (0,T) , L^2 ( \Omega_t )^3 )$, and for any $t \in [0,T]$ holds $\uu (t) \in L^2_{\mrm{loc}} ( \Omega_t)^3$.

\subsection{Blood flows equations in moving domains}
\label{sec:blood:eq}

Let $((\Omega^\pmS , \vel^\pmS))_{\pmS\geq 0}$ be a sequence in $\Opn \times \V$.
We consider the following equations for blood flows posed on the resulting sequence of non cylindrical domains defined in \eqref{eq:def:Omegat-n}:
\begin{equation}
\label{pbm:blood_flow:hetero-Dirichlet:moving-dom}
\left\lbrace
\begin{aligned}
\frac{\partial \uun }{\partial t} + \uun \cdot \nabla \uun - \div ( \stress ( \Gs \uun ) ) + \gd \prn  &= \bd{f}   & \text{ in } Q^\pmS , \\
\div \uun &= 0   & \text{ in } Q^\pmS , \\
\uun(0) &= \uu_0^\pmS   & \text{ in }  \Omega^\pmS , \\ 
\uun &= \vel^\pmS   & \text{ on }  \Sigma^\pmS ,
\end{aligned}
\right.
\end{equation}
where  for any $\vv$, $ \Gs  \vv $ is defined in \eqref{eq:def:stress} and:
\begin{equation}
\label{eq:def:stress:2}
\stress ( \Gs \vv )  := \left( 1 + \lvert \Gs \vv \rvert \right)^{q-2}  \Gs \vv , 
\quad 
\text{for } 6/5 < q < 2. 
\end{equation}
We assume that for all $\pmS \in \NN$:

\smallskip

\begin{enumerate}[label=(U.\arabic*)
]
\item \label{item:u0:cond:1}
$\uun_0 \in L^2_{0,\div} (D)^3$,

\item \label{item:u0:cond:2}
$\uun_0 = \vel^\pmS (0)$ in $\had \setminus \Omega^\pmS$,

\item \label{item:u0:cond:3}
and $\uun_0 \rightharpoonup \uu_0$ weakly in $L^2 ( \had )$.
\end{enumerate}

\smallskip

With these assumptions, if we consider that $(\Omega^\pmS , \vel^\pmS ) \longrightarrow ( \Omega , \vel )$, then we have   $\uu_0 = \vel (0)$ in $\had \setminus \Omega$. 
From \cite[Thm. 49]{nagele_ruzicka2018}, there exists a \emph{weak solution} $ \uun $ to Problem~\eqref{pbm:blood_flow:hetero-Dirichlet:moving-dom}-\eqref{eq:def:stress:2}, the definition of which is given below.
\begin{definition}
\label{thm:def:wsol:blood:mov-dom}
Let the rheology exponent be $6/5 < q < 11/5$,  the body force 
$\bd{f} \in  L^{q'} ( \Qhad )^3 \inj L^{q'} ( Q^\pmS )^3$, and the initial data
$\uu_0^\pmS \in  L^2_{\div} ( \Omega^\pmS )^3  $ such that $\uu_0^\pmS - \vel^\pmS (0) \in L^2_{0,\div} ( \Omega^\pmS )^3 $.
We say that $ \uun \in L^\infty L^2_{\div} ( \Omega^\pmS_t )^3 \cap L^q \Xq ( \Omega^\pmS_t)^3$ is a weak solution of \eqref{pbm:blood_flow:hetero-Dirichlet:moving-dom}-\eqref{eq:def:stress:2}, if the following hods:
\begin{multline}
\label{pbm:blood:weak:moving-dom:1}
- \int_0^T ( \uun , \partial_t \bd{\eta} )_{L^2 ( \Omega^\pmS_t )}
+ \int_0^T \langle \Bg ( \uun )  , \bd{\eta} \rangle_{W^{1,p}( \Omega^\pmS_t )}
+ \int_0^T \langle \BT ( \uun ) , \bd{\eta} \rangle_{W^{1,p}( \Omega^\pmS_t )}  \\
= \int_0^T \langle \bd{f} , \bd{\eta} \rangle_{W^{1,p}( \Omega^\pmS_t )} + ( \uu_0^\pmS , \bd{\eta}(0) )_{L^2 (  \Omega^\pmS )} , 
\quad \forall \bd{\eta} \in \espX_p^\pmS ,
\end{multline}
for $p \geq   (5q/6)' $,  
where the function spaces are given in Section~\ref{sec:func:setting}, and where we define:
\begin{align} 
\langle \Bg ( \uu )  , \bd{\eta} \rangle_{W^{1,p}( \Omega^\pmS_t )} &:= - \int_{\Omega^\pmS_t} (  \uu (t) \otimes  \uu (t) ) : \gd \bd{\eta} (t) , \notag \\
\label{eq:def:Op:T}
\langle \BT ( \uu )  , \bd{\eta} \rangle_{W^{1,p}( \Omega^\pmS_t )} &:=   \int_{\Omega^\pmS_t} \stress ( \Gs \uu (t) ) : \Gs \bd{\eta} (t) .
\end{align} 
\end{definition}

{\it Let us comment on this definition.}
On the one hand, from the assumption on the regularity of $\uun$, Sobolev injection theorem, and interpolation, we have $\uun \otimes \uun \in L^{5q/6} L^{5q/6} (t)^{3\times 3}$, which guarantees the definition of $\Bg ( \uu ) $. 
Given the range of $q$, we have $p \geq 2$ allowing the $L^2$ products,  and we also have $q' \geq p'$, so that the term depending on the body force $\bd{f}$ is well-defined. 
On the other hand, 
it can be shown that for any $1 < q < \infty$, the stress tensor defined in \eqref{eq:def:stress:2} satisfies the following properties (see e.g., \cite[Chap.III, Sec. 2.2.1]{galdi_et-al2008hemodynamical}), 
for all $\bd{A},\bd{B} \in \RR^{\dn  \times \dn}_{\sym}$:

\smallskip

\begin{enumerate}[label=(\roman*)
]
\item Coercivity: \label{eq:def:coercivity}  
$
  \stress  ( \bd{A} ) : \bd{A}   \geq c_1 \lv \bd{A} \rv^q - c_2 ,
$
\item \label{eq:def:growth} 
Growth: 
$ 
\lv  \stress  ( \bd{A} ) \rv  \leq c_3 ( 1+ \lv  \bd{A}  \rv^{q-1} )
$,  
\item Monotonicity: \label{eq:def:monotonicity} 
$ 
(  \stress (\bd{A}) -  \stress (\bd{B}) ) : (\bd{A} - \bd{B}) > 0 ,
$  
\end{enumerate}
for some constants $c_1 , c_3 >0$ and $c_2 \in \RR$ depending on $q$.  
The regularity assumed for the weak solution and  the growth condition \ref{eq:def:growth} yield  
$\stress ( \Gs \uu ) \in L^{q'}L^{q'} (t)^{3\times 3}$, so that the term $\BT ( \uu )$ is well-defined.  
Classically, property \ref{eq:def:coercivity} is used to obtain the energy inequality in Proposition \ref{thm:estim:constant:unif},  and \ref{eq:def:monotonicity} allows for identifying the stress limit in Section \ref{sec:identif-stress}. 
Thus, more general rheological laws satisfying \ref{eq:def:coercivity}-\ref{eq:def:monotonicity} can suit the present study (see \cite{nagele_ruzicka2018}).

\section{Shape continuity of blood flows}
\label{sec:continuity:blood}

In this section, we show that   
for a converging  sequence of moving domains $((\Omega^\pmS , \vel^\pmS ))_{\pmS \geq 0}$, and for an associated sequence of solutions $(\uu^\pmS  )_{\pmS \geq 0}$, one can extract a subsequence converging to the blood solution associated to the limit moving domain. 
This composes the following main result of this section.

\begin{theorem}
\label{thm:shapo-cont:blood}
Let $(\Omega^\pmS , \vel^\pmS ) \rightarrow ( \Omega , \vel )$ in $\Opn \times \V$,  
$\uun_0$  
satisfying \ref{item:u0:cond:1}-\ref{item:u0:cond:3}, 
and let $\uun$ be a weak solution of \eqref{pbm:blood_flow:hetero-Dirichlet:moving-dom}-\eqref{eq:def:stress:2}. 
Let $\extun$ be  
the extension of $\uun$ to $\Qhad$ by $\vel^\pmS$.  
Then, there exists $ \extu \in L^\infty L^2_{\div} ( \had )^3 \cap L^q \Xq ( \had )^3$ and a subsequence  
such that $\uu^{\pmS'}\rightharpoonup \extu$  weakly in $L^{ q } \Vq (  \had  )^3 $. 
\end{theorem}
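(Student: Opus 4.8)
The strategy is the standard compactness argument for weak solutions of generalized Navier–Stokes systems, adapted to the moving-domain setting via the extensions $\extun$. The starting point is a uniform energy estimate. I would first test the weak formulation \eqref{pbm:blood:weak:moving-dom:1} with (an approximation of) $\uun - \vel^\pmS$, using the coercivity property \ref{eq:def:coercivity} of the stress tensor, the fact that $\vel^\pmS$ is uniformly bounded in $C^{1,1}$ by $c_{\V}$, and the uniform bound on $\uun_0$ in $L^2(\had)$ coming from \ref{item:u0:cond:1} and \ref{item:u0:cond:3}. The convective term is handled as usual: $\int \langle \Bg(\uun), \uun - \vel^\pmS\rangle$ reduces, after using $\div \uun = 0$ and the boundary condition, to terms that are either zero or controlled by $\lV \vel^\pmS\rV_{C^{1}}$ times lower-order norms of $\uun$. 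This yields, for the extensions, a bound
\[
\sup_{\pmS} \Big( \lV \extun \rV_{L^\infty L^2_{\div}(\had)} + \lV \extun \rV_{L^q \Xq(\had)} \Big) \le C,
\]
with $C$ depending only on $c_{\V}$, $c_{\Opn}$, $\lV \bd f\rV_{L^{q'}(\Qhad)}$, $T$, $\had$ and $q$ — this is presumably the content of the Proposition referenced as \texttt{thm:estim:constant:unif}, so I may simply invoke it. Note the extension by $\vel^\pmS$ is divergence-free in $\had$ because $\div_x \vel^\pmS = 0$, and lies in $\Xq(\had)^3$ because each $\Omega_t^\pmS$ has a uniformly Lipschitz boundary (Subsection~\ref{sec:shapo:setting}), so $\extun$ genuinely belongs to the stated space.

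From the uniform bound, reflexivity of $L^q\Vq(\had)^3$ (recall $q > 6/5 > 1$) and Banach–Alaoglu give a subsequence and a limit $\extu$ with $\extun \rightharpoonup \extu$ weakly in $L^q\Vq(\had)^3$; the $L^\infty L^2_{\div}(\had)^3$ bound passes to the limit by weak-$*$ lower semicontinuity, so $\extu \in L^\infty L^2_{\div}(\had)^3 \cap L^q\Xq(\had)^3$ as claimed. To see that $\extu$ still has the structure of an extension (equal to $\vel$ outside the limit moving domain), I would use that $\vel^\pmS \to \vel$ in $C^1([0,T]\times\overline\had)$ together with the Hausdorff convergence $\Omega^\pmS \to \Omega$ and Lemma~\ref{thm:lem:compact-in-Q}: on any compact subset of the complement of $\underline{Q}$ the functions $\extun$ eventually coincide with $\vel^\pmS$, which converges strongly, forcing $\extu = \vel$ there. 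Strictly speaking the statement of this theorem only asserts the weak limit exists in the stated space, so this last observation is only needed for the subsequent sections, but it is cheap to record here.

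The only genuinely delicate point is the energy estimate itself, since the test function $\uun - \vel^\pmS$ is not in the admissible test space $\espX_p^\pmS$ (it does not vanish at $t=T$, and regularity in time is limited); one must use a Steklov-type mollification in time, or the fact — valid for these generalized Bochner spaces — that $\uun \in H^1((0,T),L^2(\Omega_t^\pmS)) \cap L^q\Xq$ admits an integration-by-parts identity $\frac12 \lV\uun(T)\rV^2 - \frac12\lV\uun_0\rV^2 = \int_0^T\langle \partial_t\uun, \uun\rangle$, so that the formal computation is justified. Since \cite{nagele_ruzicka2018} provides exactly such weak solutions together with their energy inequality, I would cite that inequality directly rather than re-derive it, and the proof of the theorem then amounts to: (i) uniform bound (from the cited energy inequality plus uniform control of the data via $c_{\V}$), and (ii) extraction of a weakly convergent subsequence. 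The main obstacle to keep honest is ensuring all constants in the energy inequality are $\pmS$-independent, which is where the uniform $C^{1,1}$ bound on $\vel^\pmS$ and the uniform cone property of $\Omega^\pmS$ (hence uniform Lipschitz/Korn/Poincaré-type constants for $\Omega_t^\pmS$) are essential.
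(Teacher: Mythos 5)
Your proposal correctly covers the first half of the paper's argument: a uniform (in $\pmS$) energy estimate obtained from the coercivity of $\stress$, the uniform $C^{1,1}$ bound on $\vel^\pmS$ and the uniform cone property (hence uniform Korn/Poincar\'e/Bogovski\u{\i} constants), followed by Banach--Alaoglu. This matches Proposition~\ref{thm:estim:constant:unif} and the beginning of Section~\ref{sec:exist:unif:limit} (the paper actually derives the estimate at the level of the approximate solutions $\ww^\pmS_\pmR$ of Proposition~\ref{thm:exist:approx:blood}, where testing with the solution is licit, and then lets $\pmR\to\infty$ by weak lower semicontinuity, rather than mollifying $\uun-\vel^\pmS$ in time; that difference is immaterial).

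The genuine gap is that you stop there. The theorem, as it is used in Section~\ref{sec:exist-min} (``converging to some $\uu_\pmE\in\nP(\Omega^*,\vel^*)$'') and as the paper proves it (the proof is the whole of Section~\ref{sec:continuity:blood}, concluded only after \eqref{eq:wn:momentum:lim:2}--\eqref{eq:Dun:lim.a.e.Q} and Proposition~\ref{thm:density:H1L2capLqVq}), must deliver that the weak limit $\extu$ restricted to $Q$ is a weak solution of \eqref{pbm:blood_flow}--\eqref{eq:def:stress} on the limit moving domain. This requires passing to the limit in the two nonlinear terms, and that is where the actual difficulty of extending \cite{sokolowski_stebel_2014shapo_nonnewt_t-dep} to $6/5<q<11/5$ lies. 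For the convective term one cannot invoke Aubin--Lions directly: the admissible test functions are compactly supported in $Q$, and on such subcylinders the velocities have no zero trace, so the Leray--Helmholtz projection is unavailable; the paper circumvents this with Wolf's local pressure representation (Propositions~\ref{thm:prop:local-pressure} and \ref{thm:prop:identif:convect}, plus the time-reflection Lemma~\ref{thm:lem:local-pressure} to recover the initial datum). For the stress one needs $\stress(\Gs\extu)=\bd{\chi}$, which the paper gets from the a.e.\ convergence of $\Gs\uun$ provided by the solenoidal Lipschitz-truncation results of \cite{nagele_ruzicka2018} together with monotonicity \ref{eq:def:monotonicity}; mere weak convergence in $L^{q'}$ does not commute with the nonlinearity. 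Finally a density argument (Proposition~\ref{thm:density:H1L2capLqVq}) is needed to go from compactly supported test functions back to the full test space $\espX_p$. None of these steps appears in your proposal, so as written it proves only the soft extraction statement and not the shape-continuity result the theorem is meant to encode.
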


In Section \ref{sec:exist:unif}, we first recall the existence result proved in \cite{nagele_ruzicka2018}, which gives solutions $\uun$ for each moving domain $(\Omega^\pmS , \vel^\pmS )$. 
Then we show, for these solutions, an energy estimate uniform with respect to the sequence of moving domains. 
From this, we extract in Section \ref{sec:exist:unif:limit} a converging subsequence as $\pmS \rightarrow \infty$, and identify the convective and stress limits. 
For doing this, we need to consider compactly supported test functions, so that we finally conclude with a density argument.

\subsection{Existence of solutions and uniform energy estimate}
\label{sec:exist:unif}

\begin{proposition}
\label{thm:existNestim}
Let 
$( \Omega^\pmS , \vel^\pmS ) \in \Opn \times \V$, 
$\bd{f} \in  L^{q'} ( \Qhad )^3$,  
and $\uun_0$ satisfy \ref{item:u0:cond:1}-\ref{item:u0:cond:3}. 
\begin{enumerate}
\item \label{item:thm:existNestim:exist} 
Then there exists a weak solution $\uun  \in L^\infty L^2 ( \Omega^\pmS_t )^3 \cap L^p \Xq ( \Omega^\pmS_t )^3$ to Problem~\eqref{pbm:blood_flow:hetero-Dirichlet:moving-dom}-\eqref{eq:def:stress:2} written with initial data $\uun_0$.

\item \label{item:thm:existNestim:estim} 
Furthermore, the following estimate holds:
\begin{align*}
\lV  \uun \rV_{L^\infty L^2  ( \Omega^\pmS_t)}^2 +
\lV  \uun \rV_{L^q W^{1,q} ( \Omega^\pmS_t )}^q 
\leq C ,
\end{align*}
where $C = C(  \uu_0 , f_\vel , c_\V , c_{ \korn } , T , \lv \Qhad \rv )$, with $c_\V$ the bound for the norm of $\vel$ given in the definition \eqref{eq:def:class:V} of $\V$, and $ c_{ \korn } $ the Korn's constant from \eqref{eq:unif:Bogv-and-Korn}. 
\end{enumerate}

\end{proposition}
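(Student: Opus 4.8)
The plan is to prove Proposition~\ref{thm:existNestim} in two stages: first cite the existence result from \cite{nagele_ruzicka2018} for part~\ref{item:thm:existNestim:exist}, then derive the energy estimate of part~\ref{item:thm:existNestim:estim} with constants that do not depend on the domain index $\pmS$. The existence part is immediate: by assumptions \ref{item:u0:cond:1}--\ref{item:u0:cond:3} the initial data $\uun_0 \in L^2_{0,\div}(D)^3$ with $\uun_0 - \vel^\pmS(0) \in L^2_{0,\div}(\Omega^\pmS)^3$, the body force $\bd{f} \in L^{q'}(\Qhad)^3 \inj L^{q'}(Q^\pmS)^3$, and the moving domain $Q^\pmS$ is built from $\vel^\pmS \in \V$, which is a $C^{1,1}$ solenoidal field; hence \cite[Thm.~49]{nagele_ruzicka2018} applies and yields a weak solution $\uun$ in the stated class.

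For the estimate, the standard route is to use a divergence-free lifting of the boundary datum. Since $\vel^\pmS$ is already globally defined, solenoidal and $C^{1,1}$ on $\Qhad$, it serves directly as an extension: write $\uun = \vel^\pmS + \ww^\pmS$ with $\ww^\pmS(t) \in \Vq(\Omega^\pmS_t)^3$ vanishing on $\Sigma^\pmS$. Formally testing the momentum equation \eqref{pbm:blood_flow:hetero-Dirichlet:moving-dom} with $\ww^\pmS$ (rigorously, this is justified within the construction of the solution in \cite{nagele_ruzicka2018}, or via the Reynolds transport / Leibniz rule on the moving domain) produces the energy identity. I would handle the terms as follows: the time-derivative term gives $\tfrac12 \tfrac{d}{dt}\|\ww^\pmS\|_{L^2(\Omega^\pmS_t)}^2$ up to boundary contributions controlled by $\vel^\pmS$; the stress term $\int_{\Omega^\pmS_t} \stress(\Gs\uun):\Gs\uun$ is bounded below by $c_1\|\Gs\uun\|_{L^q(\Omega^\pmS_t)}^q - c_2|\Omega^\pmS_t|$ using coercivity \ref{eq:def:coercivity}, while the cross term $\int \stress(\Gs\uun):\Gs\vel^\pmS$ is controlled by the growth bound \ref{eq:def:growth} and Young's inequality, absorbing a small multiple of $\|\Gs\uun\|_{L^q}^q$; the convective term $\int (\uun\cdot\nabla)\uun\cdot\ww^\pmS$ splits, the genuinely nonlinear piece vanishing by incompressibility and the remaining pieces being estimated by the $C^1$-norm of $\vel^\pmS$ times $L^2$ norms; the forcing term is handled by $\langle \bd{f},\ww^\pmS\rangle \le \|\bd{f}\|_{L^{q'}}\|\ww^\pmS\|_{W^{1,q}}$ and Young. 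Korn's inequality on $\Omega^\pmS_t$ (with constant $c_\korn$ uniform in $\pmS$, which is the point of requiring the uniform cone property in $\Opn$ and uniform $C^{1,1}$ control of $\vel^\pmS$ in $\V$) converts the symmetric-gradient bound into a full $W^{1,q}$ bound. Then Gr\"onwall's lemma in time closes the argument and gives the claimed bound with $C$ depending only on $\uu_0$, $f_\vel$, $c_\V$, $c_\korn$, $T$ and $|\Qhad|$.

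The main obstacle is the uniformity of the constants with respect to $\pmS$. Two sub-points need care. First, Korn's and Poincar\'e's inequalities on $\Omega^\pmS_t$ must hold with a constant independent of $t$ and $\pmS$; this follows because the flows $\bd{\varphi}^\pmS$ are $C^1$-diffeomorphisms with derivative bounds controlled by $c_\V$ (so one can pull everything back to a fixed reference configuration and absorb the Jacobians), but it must be invoked cleanly — this is the role of \eqref{eq:unif:Bogv-and-Korn}. Second, rigorously testing with $\ww^\pmS$ on a time-varying domain requires the transport-formula machinery from the functional setting in Section~\ref{sec:func:setting} (generalized Bochner spaces, well-definedness of time traces), so I would either quote that the solution of \cite{nagele_ruzicka2018} satisfies the energy inequality by construction, or reproduce the short argument changing variables to the cylinder $\Qhad$ where the time derivative is classical. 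Everything else — Young, Sobolev, H\"older, Gr\"onwall — is routine and can be compressed.
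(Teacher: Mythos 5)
Your proposal is correct and follows essentially the same route as the paper: existence is quoted from \cite[Thm.~49]{nagele_ruzicka2018}, and the uniform estimate is obtained by homogenizing with $\vel^\pmS$, testing with $\ww^\pmS=\uun-\vel^\pmS$, using coercivity of the stress, uniform Korn/Poincar\'e constants coming from the cone property and the $C^{1,1}$ bound $c_\V$, the uniform control of the data from \ref{item:u0:cond:1}--\ref{item:u0:cond:3}, and Gr\"onwall. The one place where you are less precise than the paper is the justification of testing with the solution itself: for $6/5<q<11/5$ the weak solution is not an admissible test function, and the paper resolves this exactly as your first fallback suggests, namely by deriving the estimate at the level of the regularized solutions $\ww^\pmS_\pmR$ of Proposition~\ref{thm:exist:approx:blood} (which \emph{are} admissible thanks to the $\frac{1}{\pmR}\BA$ term and identity \eqref{pbm:blood:weak:moving-dom:2}) and then passing to the limit $\pmR\to\infty$ by weak lower semicontinuity of the norms; this step should be made explicit since the resulting bound must survive that limit uniformly in $\pmS$.
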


Part \ref{item:thm:existNestim:exist} of Proposition \ref{thm:existNestim} is shown in \cite{nagele_ruzicka2018}. 
We recall the procedure in Sections \ref{sec:homog-dirichlet} and \ref{sec:blood-approx},  
to have the elements needed to  show the uniform estimate of part \ref{item:thm:existNestim:estim} in Section \ref{sec:unif-estim}.

\subsubsection{Equivalent homogeneous Dirichlet problem}
\label{sec:homog-dirichlet}

To solve Problem~\eqref{pbm:blood_flow:hetero-Dirichlet:moving-dom}-\eqref{eq:def:stress:2}, 
the first step is to homogenize the boundary condition, that is to find: 
$ 
\ww^\pmS = \uun - \vel^\pmS  
$  
solution in $L^\infty L^2_{0,\div} ( \Omega^\pmS_t )^3 \cap L^q \Vq ( \Omega^\pmS_t )^3$ of problem \eqref{pbm:blood:weak:moving-dom:1} for which $\Bg ( \uu )$, $\BT ( \uu )$, and $\bd{f}$ are respectively replaced by:  
\begin{equation}
\label{eq:def:Op:T-V}
\Bg_{\vel^\pmS} ( \uu ) := \Bg ( \uu + \vel^\pmS ) , \quad
\BT_{\vel^\pmS} ( \uu ) := \BT ( \uu + \vel^\pmS) , \quad
 \text{ and } \quad
\bd{f}_{\vel^\pmS} := \bd{f} - \partial_t \vel^\pmS ,
\end{equation}
and where the term depending on the initial condition is $(  \ww_0^\pmS , \bd{\eta} (0) )_{L^2( \Omega^\pmS )}$, 
where $\ww_0^\pmS := \uu_0^\pmS - \vel^\pmS (0)$. 
Namely, 
we seek $\ww^\pmS$ as a solution of the following problem: 
\begin{multline}
\label{pbm:blood:weak:moving-dom:wn}
- \int_0^T ( \ww^\pmS , \partial_t \bd{\eta} )_{ L^2( \Omega^\pmS_t )}
+ \int_0^T \langle \Bg_{\vel^\pmS} ( \ww^\pmS )  , \bd{\eta} \rangle_{\Vp( \Omega^\pmS_t )}
+ \int_0^T  \langle \BT_{\vel^\pmS} ( \ww^\pmS ) , \bd{\eta} \rangle_{\Vp( \Omega^\pmS_t )}  \\
= \int_0^T \langle \bd{f}_{\vel^\pmS} , \bd{\eta} \rangle_{\Vp( \Omega^\pmS_t )} + ( \ww_0^\pmS , \bd{\eta}(0) )_{L^2( \Omega^\pmS )} , 
\quad \forall \bd{\eta} \in \espX_p^\pmS .
\end{multline}

\subsubsection{Approximation}
\label{sec:blood-approx}

In order to solve \eqref{pbm:blood:weak:moving-dom:wn},  the stress tensor $\stress ( \Gs \uu)$ is approximated by   
$ 
\stress^\pmR ( \Gs \uu) := \stress ( \Gs \uu) + \frac{1}{\pmR} \BA (\Gs \uu)  
$,  
where $\BA$ is given by: 
$ 
\label{eq:def:stress-p}
\BA ( \Gs \uu )  := \left( 1 + \lvert \Gs \uu \rvert \right)^{p-2}  \Gs \uu $,  
for $p$ given in Definition \ref{thm:def:wsol:blood:mov-dom}. 
We denote by $\BT^\pmR$ and $\BT^\pmR_{\vel^\pmS}$ the operators defined in \eqref{eq:def:Op:T} and \eqref{eq:def:Op:T-V} by replacing $\stress$ by $\stress^\pmR$, 
and denote by $\bB^\pmR$ and $\bB^\pmR_{\vel^\pmS}$ the operators defined in \eqref{eq:def:Op:T} and \eqref{eq:def:Op:T-V} by replacing $\stress$ by $(1/\pmR) \BA$. 
Finally the initial data is regularized: we consider  
$(\ww_{0,\pmR}^{\pmS})_{\pmR\geq 0} \subset \Vp ( \Omega^{\pmS} )^3$, such that $\ww_{0,\pmR}^{\pmS} \longrightarrow \ww_0^{\pmS}$ strongly in $L^2$ as $\pmR \rightarrow \infty$. 
We notice that 
$\bd{f} \in  L^{q'} ( \Qhad )^3 \inj L^{q'} ( Q^\pmS )^3 \inj L^{p'} ( Q^\pmS )^3 $.
Thus we can apply the following result (see \cite[Prop. 52 and Lem. 56]{nagele_ruzicka2018}).

\begin{proposition}
\label{thm:exist:approx:blood}
Let the body force  
$\bd{f} \in  L^{q'} ( \Qhad )^3$, 
and let $( \Omega^\pmS , \vel^\pmS ) \in \Opn \times \V$. 
\begin{enumerate}
\item \label{item:thm:exist:approx:blood:i}
Let the initial data be
$\ww_{0,\pmR}^{\pmS} \in \Vp ( \Omega^\pmS )^3$.
There exists a  
$\ww_\pmR^\pmS  \in L^\infty L^2_{0,\div} ( \Omega^\pmS_t )^3 \cap L^p \Vp ( \Omega^\pmS_t)^3$ weak solution to:
\begin{multline}
\label{pbm:blood:weak:moving-dom:wnN}
\int_0^T 
\Big[
- ( \ww^\pmS_\pmR , \partial_t \bd{\eta} )_{L^2 (\Omega^\pmS_t)}
+ 
\langle \Bg_{\vel^\pmS} ( \ww^\pmS_\pmR )  , \bd{\eta} \rangle_{W^{1,p}(\Omega^\pmS_t)}
+ 
\langle \BT_{\vel^\pmS}^\pmR ( \ww^\pmS_\pmR ) , \bd{\eta} \rangle_{W^{1,p}(\Omega^\pmS_t)} 
\Big]
\\
= \int_0^T \langle \bd{f}_{\vel^\pmS} , \bd{\eta} \rangle_{W^{1,p}(\Omega^\pmS_t)} + ( \ww_{0,\pmR}^{\pmS} , \bd{\eta}(0) )_{L^2 (\Omega^\pmS)} , 
\quad \forall \bd{\eta} \in \espX_p^\pmS .
\end{multline}

\item \label{item:thm:exist:approx:blood:ii}
We have that for every $\pmR \in \NN$,  every weak solution $\ww^\pmS_\pmR$, every $s \in [0,T]$, and every $\bd{\eta} \in H^{1} L^2 (\Omega^\pmS_t)^3 \cap L^p \Vp (\Omega^\pmS_t)^3$, 
it holds:
\begin{multline}
\label{pbm:blood:weak:moving-dom:2}
( \ww^\pmS_{\pmR}(s) , \bd{\eta}(s) )_{L^2 (\Omega^\pmS_s)} 
- ( \ww^\pmS_{0,\pmR} , \bd{\eta}(0) )_{L^2 (\Omega^\pmS)} 
+ \int_0^s \langle \BT^\pmR_{\vel^\pmS} ( \ww^\pmS_\pmR ) , \bd{\eta} \rangle_{W^{1,p}(\Omega^\pmS_t)} \\
=
\int_0^s \langle \bd{f}_{\vel^\pmS} , \bd{\eta} \rangle_{W^{1,p}(\Omega^\pmS_t)} 
- \int_0^s \langle \Bg_{\vel^\pmS} ( \ww^\pmS_\pmR )  , \bd{\eta} \rangle_{W^{1,p}(\Omega^\pmS_t)} 
+ \int_0^s ( \ww^\pmS_\pmR ,  \partial_t \bd{\eta} )_{L^2 (\Omega^\pmS)}  .
\end{multline}

\item \label{item:thm:exist:approx:blood:iii}
There exists a not relabeled subsequence $(\ww_\pmR^\pmS)_{\pmR\geq 0} $
such that:
\begin{align*}
\ww^\pmS_\pmR  & \xrightharpoonup[\pmR\rightarrow \infty]{}  \ww^\pmS
\qquad   \text{ weakly in } L^{ q } \Vq (  \Omega^\pmS_t  )^3 , \\
\ww^\pmS_\pmR   & \xrightharpoonup[\pmR\rightarrow \infty]{}  \ww^\pmS
\qquad   \text{ weakly-$*$ in } L^{ \infty } L^2 (  \Omega^\pmS_t  )^3 ,
\end{align*}
where $\ww^\pmS\in L^\infty L^2_{0,\div} ( \Omega^\pmS_t )^3 \cap L^q \Vq ( \Omega^\pmS_t)^3$ is a weak solution to Problem \eqref{pbm:blood:weak:moving-dom:wn}. 
\end{enumerate}

\end{proposition}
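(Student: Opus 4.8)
The cleanest route is to transport everything, through the $C^1$-diffeomorphism $\bd{\varphi}^\pmS$ of \eqref{eq:def:transfo-assoc-velocity}, to a non-autonomous problem on the fixed reference domain $\Omega^\pmS$, and then to follow the two-stage limiting scheme of \cite{nagele_ruzicka2018} --- in fact the statement is essentially \cite[Prop.~52 and Lem.~56]{nagele_ruzicka2018} --- so I only sketch the ingredients. For part~\ref{item:thm:exist:approx:blood:i}, fix $\pmR$: the regularized stress $\stress^\pmR = \stress + \tfrac{1}{\pmR}\BA$ is $p$-coercive and strictly monotone with $p \geq 2$, so \eqref{pbm:blood:weak:moving-dom:wnN} is a well-behaved monotone evolution with full $L^p$ control of the gradient, and one runs a Galerkin scheme: choose a countable basis of $\Vp(\Omega^\pmS)^3$, push it forward onto the slices $\Omega^\pmS_t$, solve the resulting finite-dimensional system on $[0,T]$, and pass to the limit. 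The a priori bounds come from testing with the solution itself --- the convective term $\Bg_{\vel^\pmS}$ is controlled because $\div\ww^\pmS_\pmR = 0$ and $\vel^\pmS$ is bounded in $C^{1,1}$ by $c_\V$ (cf.\ \eqref{eq:def:class:V}), the body force by Young's inequality, and coercivity \ref{eq:def:coercivity} of $\stress^\pmR$ together with a $t$-uniform Korn inequality gives $\lV\ww^\pmS_\pmR\rV_{L^\infty L^2}^2 + \lV\ww^\pmS_\pmR\rV_{L^p W^{1,p}}^p \leq C(\pmR)$. To pass to the Galerkin limit one uses weak $L^p$ compactness of $(\ww^\pmS_\pmR)$ and $(\gd\ww^\pmS_\pmR)$, strong $L^2(Q^\pmS)$ compactness from an Aubin--Lions-type lemma for moving domains (the bound on $\partial_t\ww^\pmS_\pmR$ in the dual of $L^p\Vp$ is exactly why $p \geq (5q/6)'$ is imposed, so the convective term lands in that dual), and Minty's monotonicity trick to identify $\BT^\pmR_{\vel^\pmS}$, which is legitimate here precisely because the $\tfrac{1}{\pmR}\BA$ term controls $\gd\ww^\pmS_\pmR$ in $L^p$.

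For part~\ref{item:thm:exist:approx:blood:ii}, identity \eqref{pbm:blood:weak:moving-dom:2} is just the time-integrated form of \eqref{pbm:blood:weak:moving-dom:wnN} on $[0,s]$. Since $\ww^\pmS_\pmR \in L^\infty L^2 \cap L^p\Vp$ with $\partial_t\ww^\pmS_\pmR$ in the dual of $L^p\Vp$, the generalized time derivative is compatible with the $L^2(\Omega^\pmS_t)$-pairing, so one may integrate by parts in time; a density argument (approximating a general $\bd{\eta}\in H^1L^2\cap L^p\Vp$ and using time cut-offs near $s$ and $0$) enlarges the test class from $\espX_p^\pmS$ to the one in the statement and produces the endpoint terms at $t = s$ and $t = 0$.

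For part~\ref{item:thm:exist:approx:blood:iii}, test \eqref{pbm:blood:weak:moving-dom:2} with $\bd{\eta} = \ww^\pmS_\pmR$ and now use coercivity of $\stress$ \emph{alone}, discarding the nonnegative term $\tfrac{1}{\pmR}\BA(\Gs\ww^\pmS_\pmR):\Gs\ww^\pmS_\pmR$: this gives $\lV\ww^\pmS_\pmR\rV_{L^\infty L^2}^2 + \lV\ww^\pmS_\pmR\rV_{L^q W^{1,q}}^q \leq C$ with $C$ independent of $\pmR$, together with $\tfrac{1}{\pmR}\lV\ww^\pmS_\pmR\rV_{W^{1,p}}^p \leq C$, hence $\tfrac{1}{\pmR}\BA(\Gs\ww^\pmS_\pmR)\to 0$ in $L^{p'}$ while, by the growth bound \ref{eq:def:growth}, $\stress(\Gs\ww^\pmS_\pmR)$ stays bounded in $L^{q'}$. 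Extract $\ww^\pmS_\pmR \rightharpoonup \ww^\pmS$ weakly in $L^q\Vq$, weakly-$*$ in $L^\infty L^2$, and strongly in $L^1(Q^\pmS)$ (Aubin--Lions once more), which handles the convective term; the remaining point is the identification $\stress(\Gs\ww^\pmS_\pmR)\rightharpoonup\stress(\Gs\ww^\pmS)$, for which --- since $q < 11/5$ leaves no compactness for $\Gs\ww^\pmS_\pmR$ and Minty's trick is unavailable --- one invokes the solenoidal Lipschitz-truncation method of \cite{nagele_ruzicka2018} (see also \cite{diening_ruzicka_wolf2010existence}): test the difference of the two equations with a divergence-free Lipschitz truncation of $\ww^\pmS_\pmR - \ww^\pmS$, use monotonicity \ref{eq:def:monotonicity}, and send the truncation level to infinity to get a.e.\ convergence of $\Gs\ww^\pmS_\pmR$ and thus the identification; passing to the limit in all terms of \eqref{pbm:blood:weak:moving-dom:wnN} then yields \eqref{pbm:blood:weak:moving-dom:wn}.

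The main obstacle is this last identification of the stress limit in the subcritical range $6/5 < q < 11/5$: standard parabolic compactness only gives strong convergence of $\ww^\pmS_\pmR$, not of its gradient, so Minty monotonicity no longer closes the argument and the (solenoidal) Lipschitz truncation is genuinely needed --- which is why the whole statement leans on \cite{nagele_ruzicka2018}. A secondary, purely technical difficulty is that every function space here is time-dependent, so the Galerkin basis, the Aubin--Lions compactness, and the density/integration-by-parts steps all have to be carried out in the generalized Bochner framework of Section~\ref{sec:func:setting}, most conveniently after flattening by $\bd{\varphi}^\pmS$ to the reference domain $\Omega^\pmS$ at the cost of turning \eqref{pbm:blood:weak:moving-dom:wnN} into a variable-coefficient equation.
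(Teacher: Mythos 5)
Your sketch follows exactly the route the paper relies on: the paper does not prove Proposition~\ref{thm:exist:approx:blood} itself but cites \cite[Prop.~52, Lem.~56, Sec.~5]{nagele_ruzicka2018}, and your outline (Galerkin plus monotone-operator/Minty argument for the $\pmR$-regularized problem, time-integrated identity by density and integration by parts, uniform energy bounds and solenoidal Lipschitz truncation to identify the stress in the limit $\pmR\to\infty$ for $6/5<q<11/5$) is a faithful summary of that reference's proof. This is correct and essentially the same approach as the paper's.
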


The proof of Proposition \ref{thm:exist:approx:blood} is given in \cite[Sec. 5]{nagele_ruzicka2018}. 
To obtain this result, the authors write an energy estimate for the solutions $\ww^\pmS_\pmR$, allowing for letting  $\pmR \rightarrow \infty$. 
In order to pass to the limit as $\pmS \rightarrow \infty$, we show that this energy estimate is uniform.

\subsubsection{Uniform energy estimate}
\label{sec:unif-estim}

\begin{proposition}
\label{thm:estim:constant:unif}
Let $\ww_\pmR^\pmS$ be a  
solution of Problem \eqref{pbm:blood:weak:moving-dom:wnN}.
Then, for some constant $C>0$ which does not depend on $\pmS$, the following estimate holds:
\begin{align}
\label{eq:estim:blood:1}
\lV  \ww_\pmR^\pmS \rV_{L^\infty L^2(\Omega^\pmS_t)}^2 +
\lV  \ww_\pmR^\pmS \rV_{L^q W^{1,q} (\Omega^\pmS_t)}^q +
\frac{1}{\pmR} \lV \Gs  \ww_\pmR^\pmS \rV_{L^p ( Q^\pmS )}^p
\leq C .
\end{align}
\end{proposition}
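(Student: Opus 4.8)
The plan is to test the weak formulation with $\bd{\eta} = \ww_\pmR^\pmS$ — justified because $\ww_\pmR^\pmS$ itself is admissible as a test function in the integrated identity \eqref{pbm:blood:weak:moving-dom:2} (take $s$ generic, and handle the time-derivative/initial-value terms via the $H^1 L^2$ regularity built into the space) — and then track every constant produced to confirm it depends only on the data $\bd{f}$, $c_\V$, $\uu_0$, the Korn constant and $|\Qhad|$, but not on $\pmS$. First I would write out the energy identity obtained from \eqref{pbm:blood:weak:moving-dom:2} with $\bd{\eta} = \ww_\pmR^\pmS$ and $s\in[0,T]$ arbitrary: the time-derivative term yields $\tfrac12\lV\ww_\pmR^\pmS(s)\rV_{L^2(\Omega^\pmS_s)}^2 - \tfrac12\lV\ww_{0,\pmR}^\pmS\rV_{L^2(\Omega^\pmS)}^2$ (here one uses the transport/Reynolds identity for moving domains, which is where $\vel^\pmS$, hence $c_\V$, enters but with a constant uniform in $\pmS$ since $\lV\vel^\pmS\rV_{C^{1,1}}\le c_\V$); the stress term $\int_0^s\langle\BT^\pmR_{\vel^\pmS}(\ww_\pmR^\pmS),\ww_\pmR^\pmS\rangle$ splits into $\int_0^s\int_{\Omega^\pmS_t}\stress(\Gs\ww_\pmR^\pmS+\Gs\vel^\pmS):\Gs\ww_\pmR^\pmS$ plus the $\tfrac1\pmR\BA$ analogue; using coercivity \ref{eq:def:coercivity} on the leading part after adding and subtracting $\Gs\vel^\pmS$, together with the growth bound \ref{eq:def:growth}, produces $\gtrsim c_1\lV\Gs\ww_\pmR^\pmS\rV_{L^q(Q^\pmS)}^q + \tfrac1\pmR\lV\Gs\ww_\pmR^\pmS\rV_{L^p(Q^\pmS)}^p$ minus lower-order terms controlled by $\lV\Gs\vel^\pmS\rV_{L^q}$ and $c_2|\Qhad|$.

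Next I would estimate the right-hand side contributions. The convective term $\int_0^s\langle\Bg_{\vel^\pmS}(\ww_\pmR^\pmS),\ww_\pmR^\pmS\rangle = -\int_0^s\int_{\Omega^\pmS_t}((\ww_\pmR^\pmS+\vel^\pmS)\otimes(\ww_\pmR^\pmS+\vel^\pmS)):\Gs\ww_\pmR^\pmS$: the purely quadratic-in-$\ww$ piece $\int(\ww\otimes\ww):\Gs\ww$ vanishes by the divergence-free condition and the homogeneous boundary values of $\ww_\pmR^\pmS$ (integration by parts), and the remaining cross terms involving $\vel^\pmS$ are bounded using $\lV\vel^\pmS\rV_{L^\infty}\le c_\V$, Hölder and Young so as to be absorbed into $\tfrac{c_1}{2}\lV\Gs\ww_\pmR^\pmS\rV_{L^q}^q$ plus a term $\lesssim\int_0^s\lV\ww_\pmR^\pmS(t)\rV_{L^2(\Omega^\pmS_t)}^2\,dt$ (possibly after interpolating an $L^2$-in-space norm against the $W^{1,q}$ norm, valid since $q>6/5$ ensures the relevant Sobolev/interpolation exponents behave). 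The forcing term $\int_0^s\langle\bd{f}_{\vel^\pmS},\ww_\pmR^\pmS\rangle = \int_0^s\langle\bd{f}-\partial_t\vel^\pmS,\ww_\pmR^\pmS\rangle$ is handled by Hölder in the $W^{1,p}/W^{-1,p'}$ (or directly $L^{q'}$–$L^q$) duality and Young, using $\lV\bd{f}\rV_{L^{q'}(\Qhad)}$ and $\lV\partial_t\vel^\pmS\rV_{L^\infty}\le c_\V$; the Korn constant $c_\korn$ appears precisely here, when converting $\lV\Gs\ww_\pmR^\pmS\rV_{L^q}$ control back to $\lV\ww_\pmR^\pmS\rV_{W^{1,q}}$ control and $c_\korn$ is uniform in $\pmS$ by \eqref{eq:unif:Bogv-and-Korn}. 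Also $\lV\ww_{0,\pmR}^\pmS\rV_{L^2}^2\le\lV\ww_0^\pmS\rV_{L^2}^2+o(1)\lesssim\lV\uu_0^\pmS\rV_{L^2}^2+\lV\vel^\pmS(0)\rV_{L^2}^2$, uniformly bounded by \ref{item:u0:cond:3} and $c_\V$.

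Collecting everything yields, for a.e. $s$, an inequality of the form $\lV\ww_\pmR^\pmS(s)\rV_{L^2(\Omega^\pmS_s)}^2 + c\lV\Gs\ww_\pmR^\pmS\rV_{L^q(Q^\pmS\cap\{t<s\})}^q + \tfrac{c}{\pmR}\lV\Gs\ww_\pmR^\pmS\rV_{L^p(Q^\pmS\cap\{t<s\})}^p \le C_0 + C_1\int_0^s\lV\ww_\pmR^\pmS(t)\rV_{L^2(\Omega^\pmS_t)}^2\,dt$ with $C_0,C_1$ depending only on the stated data; Grönwall's lemma then gives the $L^\infty L^2$ bound uniformly in $\pmS$, and feeding this back bounds the $L^q W^{1,q}$ term (via the uniform Korn inequality) and the $\tfrac1\pmR$-penalized $L^p$ term. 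The main obstacle I anticipate is \emph{the moving-domain bookkeeping}: justifying the Reynolds transport identity for $\tfrac{d}{dt}\lV\ww_\pmR^\pmS(t)\rV_{L^2(\Omega^\pmS_t)}^2$ rigorously at the level of weak solutions in generalized Bochner spaces (the boundary term it generates involves $\vel^\pmS\cdot\nml$ on $\Sigma^\pmS$ against $|\ww_\pmR^\pmS|^2$, which vanishes since $\ww_\pmR^\pmS$ has zero trace there, but this must be argued via the integrated identity \eqref{pbm:blood:weak:moving-dom:2} rather than a naive pointwise differentiation), and ensuring that every absorption constant — in particular those coming from Korn's and Poincaré-type inequalities on the time-varying Lipschitz domains $\Omega^\pmS_t$ — is genuinely independent of $\pmS$, which relies on the uniform $C^{1,1}$ control of $\vel^\pmS$ and the uniform cone property of $\Omega^\pmS$.
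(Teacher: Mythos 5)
Your proposal is correct and follows essentially the same route as the paper: test the integrated identity \eqref{pbm:blood:weak:moving-dom:2} with $\bd{\eta}=\ww_\pmR^\pmS$, use that the pure $\ww\otimes\ww:\nabla\ww$ part of the convective term vanishes while the $\vel^\pmS$-cross terms are absorbed via H\"older--Young and the uniform Korn/Poincar\'e constants from \eqref{eq:unif:Bogv-and-Korn_t}, bound the forcing and initial data uniformly via \ref{item:u0:cond:3} and $c_\V$, and conclude by Gr\"onwall. The only cosmetic differences are that the paper invokes a cited lemma for the coercivity of $\BT^\pmR_{\vel^\pmS}$ rather than rederiving it from \ref{eq:def:coercivity}--\ref{eq:def:growth}, and bounds the $(\ww\otimes\ww):\nabla\vel^\pmS$ term directly by $c_\V\int\lV\ww\rV_{L^2}^2$ with no interpolation.
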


\begin{proof}
We recall that $( \ww^\pmS_{0,\pmR} )_{\pmR\geq 0}$ is chosen in $ \Vp ( \Omega^{\pmS} )^3$ such that $\ww^\pmS_{0,\pmR}$ converges strongly in $L^2 (\Omega^\pmS)^3$ to $\ww^\pmS_{0}\in L^2_{0,\div} ( \Omega^\pmS )^3$, 
and the solution $\ww^\pmS_{\pmR}$ is such that $\ww^\pmS_{\pmR}(0) =  \ww^\pmS_{0,\pmR}$. 
Thus for $\pmR$ large enough, 
$( \ww^\pmS_{0,\pmR} , \ww^\pmS_{\pmR} (0) )_{L^2 (\Omega^\pmS)} = \lV \ww^\pmS_{0,\pmR} \rV^2_{L^2 ( \Omega^\pmS )} 
\leq 2 \lV \ww^\pmS_{0} \rV^2_{L^2 ( \Omega^\pmS )}$. 
From the condition \ref{item:u0:cond:2} on $\uun_0$ and the definition of $\ww_0^\pmS$, we have that 
$\lV \ww_0^\pmS \rV_{L^2 ( \Omega^\pmS )} 
\leq \lV \uun_0 \rV_{L^2 ( \had )} 
+\lV \vel^\pmS(0) \rV_{L^2 ( \had )}$. 
Furthermore, from the weak convergence of $( \uun_0 )_{\pmS\geq 0}$ in $L^2 ( \had )^3$, we have a positive constant $C_0$ such that $ \lV \uun_0 \rV_{L^2 ( \had )} \leq C_0$ for all $\pmS\in \NN$,  
and by definition of $(\Omega^\pmS , \vel^\pmS ) \longrightarrow ( \Omega , \vel )$, we have $\lV \vel^\pmS \rV_{L^2 ( \had )} \leq \lv \had \rv^{1/2} c_\V$ for all $\pmS\in \NN$.
Due to these considerations, by setting $\bd{\eta} = \ww^\pmS_\pmR$ in \eqref{pbm:blood:weak:moving-dom:2}, and rewriting the last term of the right hand side, we get:
\begin{align}
\frac{1}{2} ( \ww^\pmS_{\pmR}(s) , \ww^\pmS_\pmR (s) )_{L^2( \Omega^\pmS_s)} 
&
+ \int_0^s ( \BT^\pmR_{\vel^\pmS} \langle \ww^\pmS_\pmR ) , \ww^\pmS_\pmR \rangle_{W^{1,p}(\Omega^\pmS_t)} \notag\\
&\leq 
\int_0^s \langle \bd{f}_{\vel^\pmS} ,\ww^\pmS_\pmR  \rangle_{W^{1,p}(\Omega^\pmS_t)}
- \int_0^s \langle \Bg_{\vel^\pmS} ( \ww^\pmS_{\pmR} )   , \ww^\pmS_\pmR \rangle_{W^{1,p}(\Omega^\pmS_t)}
+  K_0  ,
\label{pbm:blood:weak:moving-dom:3}
\end{align}
where $K_0 = 2 (C_0^2 + \lv \had \rv c_\V^2 )$. We can directly compute that:
\begin{equation}
\label{pbm:blood:weak:moving-dom:4}
- \int_0^s \langle \Bg_{\vel^\pmS} ( \ww^\pmS_{\pmR} )   , \ww^\pmS_\pmR \rangle_{W^{1,p}(\Omega^\pmS_t)}
= \int_0^s \int_{\Omega^\pmS_t} ( \vel^\pmS \otimes \vel^\pmS ) : \gd \ww^\pmS_\pmR 
- \int_0^s \int_{\Omega^\pmS_t} ( \ww^\pmS_\pmR \otimes \ww^\pmS_\pmR ) : \gd  \vel^\pmS.
\end{equation}
The second term of the right hand side of \eqref{pbm:blood:weak:moving-dom:4} is bounded by $c_\V \int_0^s \lV \ww^\pmS_\pmR \rV_{L^2 ( \Omega^\pmS_s )}$. 
Regarding the first term, we would like to have the $L^q$-norm of $\gd \ww^\pmS_\pmR$. 
Recalling that $(\vel^\pmS \otimes \vel^\pmS ) \in L^\infty ( \Qhad )^{3\times 3}$, we can write: 
$$
\int_0^s \int_{\Omega_t} ( \vel^\pmS \otimes \vel^\pmS ) : \gd \ww^\pmS_\pmR
\leq \int_0^s 
\lV \vel^\pmS \otimes \vel^\pmS \rV_{L^{q'} ( \Omega^\pmS_t )} 
\lV \gd \ww^\pmS_\pmR \rV_{L^{q} ( \Omega^\pmS_t )} .
$$ 
Then, the first term of the right hand side of \eqref{pbm:blood:weak:moving-dom:4} can be bounded by:
\begin{align*}
\int_0^s \lV \vel^\pmS \otimes \vel^\pmS \rV_{L^{q'} ( \Omega^\pmS_t )} 
\lV \gd \ww^\pmS_\pmR  \rV_{L^q ( \Omega^\pmS_t )}  
& \leq \int_0^s \Big(
\frac{\varepsilon}{q}  \lV \Gs \ww^\pmS_\pmR  \rV_{L^q ( \Omega^\pmS_t )}^q 
+ \frac{ c_{\korn}^{q'} }{q' \varepsilon^{1/(q-1)}} \lV \vel^\pmS \otimes \vel^\pmS \rV_{L^{q'} ( \Omega^\pmS_t )}^{q'} 
\Big) \notag \\
&\leq
\frac{\varepsilon }{q} \int_0^s \lV \Gs \ww^\pmS_\pmR  \rV_{L^q ( \Omega^\pmS_t )}^q 
+
\frac{ c_{\korn}^{q'} c_{\V}^{2q'} \lv \Qhad \rv}{q' \varepsilon^{1/(q-1)} }     ,
\end{align*}
where $c_{\korn}$ is an upper bound for Korn's constant of $\Omega^\pmS_t$, given in \eqref{eq:unif:Bogv-and-Korn_t}. 
Now we consider the term depending on $\bd{f}_{\vel^\pmS}$ in \eqref{pbm:blood:weak:moving-dom:3}, 
rewritten $\int_0^s \langle \bd{f}_{\vel^\pmS} , \ww^\pmS_\pmR \rangle_{W^{1,p}(\Omega^\pmS_t)} = \int_0^s \langle \bd{f}_{\vel^\pmS} , \ww^\pmS_\pmR \rangle_{W^{1,q}(\Omega^\pmS_t)} $. 
Noting that Poincar\'{e}'s inequality holds uniformly with constant $c_{\nP}$ (see \cite[Prop. 3.1.17]{henrot_pierre2018shapo_geom}), we estimate: 
\begin{align*}
\int_0^s \langle \bd{f}_{\vel^\pmS} , \ww^\pmS_\pmR \rangle_{W^{1,q}(\Omega^\pmS_t)}
&\leq c_{\nP}c_{\korn}
\int_0^s \lV \bd{f}_{\vel^\pmS} \rV_{L^{q'} ( \Omega^\pmS_t )}
\lV \Gs \ww^\pmS_\pmR \rV_{L^q ( \Omega^\pmS_t )} \notag \\
&\leq 
\frac{\varepsilon }{q} \int_0^s \lV \Gs \ww^\pmS_\pmR  \rV_{L^q ( \Omega^\pmS_t )}^q 
+ 
\frac{(c_{\nP}c_{\korn})^{q'}  }{ q' \varepsilon^{1/(q-1)} }  \int_0^s \lV \bd{f}_{\vel^\pmS} \rV_{L^{q'} ( \Omega^\pmS_t )}^{q'} \notag \\
&\leq 
\frac{\varepsilon }{q} \int_0^s \lV \Gs \ww^\pmS_\pmR  \rV_{L^q ( \Omega^\pmS_t )}^q 
+ 
\frac{ (c_{\nP}c_{\korn})^{q'} 2^{1/(q-1)} }{ q' \varepsilon^{1/(q-1)} } \Big( \lV \bd{f} \rV_{L^{q'} ( \Qhad )}^{q'}
+  c_{\V}^{q'}  \lv \Qhad \rv 
\Big) .
\end{align*}
It can be shown (see e.g. \cite[Lem. 25]{nagele_ruzicka2018}) that $\langle \BT_{\vel^\pmS} ( \ww^\pmS_\pmR ) , \ww^\pmS_\pmR \rangle_{W^{1,p}(\Omega^\pmS_t)} \geq \lV \Gs \ww^\pmS_\pmR \rV_{L^q( \Omega^\pmS_t )}^q $ and $\langle \bB^\pmR_{\vel^\pmS} ( \ww^\pmS_\pmR ) , \ww^\pmS_\pmR \rangle_{W^{1,p}(\Omega^\pmS_t)} \geq \frac{1}{\pmR} \lV \Gs \ww^\pmS_\pmR \rV_{W^{1,p}(\Omega^\pmS_t)}^p $, where we recall that $\BT^\pmR_{\vel^\pmS} = \BT_{\vel^\pmS} + \bB^\pmR_{\vel^\pmS}$. Gathering all these results, and choosing $\varepsilon$ small enough, we get: 
\begin{multline*}
 \frac{1}{2} \left[ 
( \ww^\pmS_{\pmR}(s) , \ww^\pmS_\pmR (s) )_{L^2(\Omega^\pmS_s)} 
+ \int_0^s \Big(
\lV \Gs \ww^\pmS_\pmR \rV_{L^q( \Omega^\pmS_t)}^q 
+ \frac{1}{\pmR}  \lV \Gs \ww^\pmS_\pmR \rV_{W^{1,p}(\Omega^\pmS_t)}^p
\Big) \dm t
\right]
  \\
\leq  K_0 
 + \frac{ (c_{\nP}c_{\korn})^{q'}  }{ q' \varepsilon^{1/(q-1)} }
\left(
2^{1/(q-1)} \lV \bd{f} \rV_{L^{q'} ( \Qhad )}^{q'}
+ c_{\V}^{q'} ( 2^{1/(q-1)} + c_{\V}^{q'} ) \lv \Qhad \rv
\right) 
+ c_\V \int_0^s \lV \ww^\pmS_\pmR \rV_{L^2 ( \Omega^\pmS_s )} . 
\end{multline*}
Thus we obtain from Gr\"{o}nwall's Lemma:
\begin{equation}
\lV  \ww_\pmR^\pmS \rV_{L^\infty L^2  (\Omega^\pmS_t)}^2 +
\lV  \ww_\pmR^\pmS \rV_{L^q W^{1,q}(\Omega^\pmS_t)}^q   
+
\frac{1}{\pmR} \lV \Gs  \ww_\pmR^\pmS \rV_{L^p ( Q^\pmS )}^p
\leq  
C , 
\label{eq:estim:blood:2} 
\end{equation}
for some $C =C ( q , C_0 , \lv \Qhad \rv , T , \lV\bd{f}\rV_{L^{q'}(\Qhad)} , c_\V , c_\korn )$.  
This completes the proof.
\end{proof}

\subsection{Continuity of blood flows solutions}
\label{sec:exist:unif:limit}

Now we can pass to the limit as $\pmS\rightarrow \infty$. 
First, in view of Proposition \ref{thm:exist:approx:blood}, we recall that $\ww^\pmS$ is both the weak limit in $L^q \Vq ( \Omega^\pmS_t )^3$ and the weak-$*$ limit in $L^\infty L^2(\Omega^\pmS_t)^3$ of a sequence of solutions $(\ww_\pmR^\pmS)_{\pmR\geq 0}$, 
and we can estimate:  
\begin{align*}
\lV  \ww^\pmS \rV_{L^\infty L^2 ( \Omega^\pmS_t )} 
\leq \liminf_{\pmR\rightarrow \infty} \lV  \ww_\pmR^\pmS \rV_{L^\infty L^2 ( \Omega^\pmS_t )}   
\quad \text{ and } \quad
\lV  \ww^\pmS \rV_{L^q W^{1,q}( \Omega^\pmS_t )}^q 
\leq \liminf_{\pmR\rightarrow \infty} \lV  \ww_\pmR^\pmS \rV_{L^q W^{1,q}( \Omega^\pmS_t )}^q  .
\end{align*} 
Thus from \eqref{eq:estim:blood:1} and Proposition \ref{thm:estim:constant:unif} we have:
\begin{align}
\label{eq:estim:blood:wn}
\lV  \ww^\pmS \rV_{L^\infty L^2 ( \Omega^\pmS_t )}^2 +
\lV  \ww^\pmS \rV_{L^q W^{1,q} ( \Omega^\pmS_t )}^q 
\leq C .
\end{align}
We define on $\Qhad$ the zero  extension of $\ww^\pmS$ by:
\begin{equation*}
\label{eq:def:tilde_wn}
\widetilde{\ww}^\pmS := 
\left\lbrace
\begin{aligned}
&\ww^\pmS && \text{ in } Q^\pmS , \\
& 0 && \text{ in } \Qhad\setminus\overline{Q^\pmS} , \\
\end{aligned}
\right.
\end{equation*}
and we obviously have $\lV  \widetilde{\ww}^\pmS \rV_{L^\infty L^2 ( \had )} = \lV  \ww^\pmS \rV_{L^\infty L^2 ( \Omega^\pmS_t )}$ and $ \lV  \widetilde{\ww}^\pmS \rV_{L^q W^{1,q} ( \had )} = \lV  \ww^\pmS \rV_{L^q W^{1,q}( \Omega^\pmS_t )}$. 
We remark that $\widetilde{\ww}^\pmS$ is well-defined and belongs to $L^q \Vq ( \had )^3$. 
Indeed, because $\ww^\pmS $ belongs to $L^q \Vq ( \Omega^\pmS_t )^3$, 
we have by definition that $\ww^\pmS (t) \in \Vq ( \Omega^\pmS_t )^3$ for a.e. $t \in (0,T)$, and then the zero extension $\widetilde{\ww}^\pmS (t)$ belongs to $\Vq ( \had )^3$ for a.e. $t$ as well, and  $\lV \widetilde{\ww}^\pmS (t) \rV_{\Vq ( \had )} = \lV \ww^\pmS (t) \rV_{\Vq ( \Omega^\pmS_t )} $. 
The latter lying in $L^q ((0,T))^3$ by the definition of the generalized Bochner space, the classical Bochner regularity of $ \widetilde{\ww}^\pmS$ holds.    
Thus from \eqref{eq:estim:blood:wn}, Sobolev embedding, and growth condition \ref{eq:def:growth} on $\stress$, 
we can take a subsequence still denoted by $(\widetilde{\ww}^\pmS)_{\pmS\geq 0}$ such that:  
\begin{align}
\label{eq:weaklim:tild:w:1}
\widetilde{\ww}^\pmS  & \xrightharpoonup[\quad]{}  
\widetilde{\ww} 
&& \hspace{-1ex} \text{ weakly in } L^{ q } \Vq (  \had  )^3 \text{ and } L^{ 5q / 3 } (  \Qhad  )^3 , \\
\label{eq:weaklim:tild:w:2}
( \widetilde{\ww}^\pmS + \vel^\pmS ) \otimes ( \widetilde{\ww}^\pmS  + \vel^\pmS ) &\xrightharpoonup[\quad]{}  
\mathbf{H}_{\vel}
&& \hspace{-1ex} \text{ weakly in }  L^{ 5q / 6 } (  \Qhad  )^{3 \times 3} , \\
\label{eq:weaklim:tild:w:5}
\stress ( \Gs \widetilde{\ww}^\pmS + \Gs \vel^\pmS ) &\xrightharpoonup[\quad]{} 
\bd{\chi}_{\vel}
&& \hspace{-1ex} \text{ weakly in } L^{ q' }  (  \Qhad  )^{3 \times 3} ,
\end{align}
where $\widetilde{\ww}$, $\mathbf{H}_{\vel}$, and $\bd{\chi}_{\vel}$ denote the weak limits, 
and 
$\widetilde{\ww}^\pmS  \rightharpoonup
\widetilde{\ww}$ weakly-$*$ in $L^{ \infty } L^2 (  \had  )^3$.

\begin{proposition}
\label{thm:prop:extension:Xp:Q:2:Qhad}
Let us consider $(\Omega ,\vel )\in \Opn \times \V$, and let $\bd{\eta} \in \espX_q$ the corresponding function space be defined in \eqref{eq:def:space:XqD}.
Then $\bd{\widetilde{\eta}} \in \espX_q ( \had )$, where $\bd{\widetilde{\eta}}$ denotes the zero extension of $\bd{\eta}$ to $\Qhad \setminus Q$, and $\espX_q ( \had )$ is defined in \eqref{eq:def:space:XqD}.
\end{proposition}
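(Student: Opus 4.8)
The plan is to prove that the zero-extension operator $E\colon\bd{\eta}\mapsto\bd{\widetilde{\eta}}$ maps $\espX_q$ isometrically into $\espX_q(\had)$, which amounts to checking three things about $\bd{\widetilde{\eta}}$: that $\bd{\widetilde{\eta}}(t)\in\Vq(\had)^3$ for a.e.\ $t$ with $t\mapsto\lV\bd{\widetilde{\eta}}(t)\rV_{W^{1,q}(\had)}\in L^q((0,T))$, that $\bd{\widetilde{\eta}}\in H^1((0,T),L^2(\had)^3)$, and that $\bd{\widetilde{\eta}}(T,\cdot)=0$. The spatial part is elementary and I would dispatch it first: since $\vel$ is compactly supported in $\had$, the flow $\bd{\varphi}(t,\cdot)$ is the identity near $\partial\had$, so $\Omega_t=\bd{\varphi}(t,\Omega)\Ksub\had$; and for a.e.\ $t$ the field $\bd{\eta}(t)\in\Vq(\Omega_t)^3$ is a $W^{1,q}$-limit of smooth, compactly supported, divergence-free fields on $\Omega_t$, whose zero extensions are smooth, compactly supported, divergence-free fields on $\had$, so $\bd{\widetilde{\eta}}(t)\in\Vq(\had)^3$ with $\lV\bd{\widetilde{\eta}}(t)\rV_{W^{1,q}(\had)}=\lV\bd{\eta}(t)\rV_{W^{1,q}(\Omega_t)}$ and $\lV\bd{\widetilde{\eta}}(t)\rV_{L^2(\had)}=\lV\bd{\eta}(t)\rV_{L^2(\Omega_t)}$. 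Integrating in $t$ then gives $\bd{\widetilde{\eta}}\in L^q((0,T),\Vq(\had)^3)\cap L^2((0,T),L^2(\had)^3)$ and shows that $E$ preserves these two norms; once the time-derivative claim below is known, $\bd{\widetilde{\eta}}(T,\cdot)$ is well defined and equals the zero extension of $\bd{\eta}(T,\cdot)=0$.

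The heart of the matter is the claim that $\partial_t\bd{\widetilde{\eta}}$, computed in $\D'(\Qhad)$, is the zero extension of $\partial_t\bd{\eta}\in L^2(Q)^3$, which then gives $\bd{\widetilde{\eta}}\in H^1((0,T),L^2(\had)^3)$ and shows that $E$ preserves also the $\partial_t(\cdot)$ part of the $L^2L^2$-norm, so that altogether $E$ is an isometry for the graph norm of $\espX_q$ into the Banach space $H^1((0,T),L^2(\had)^3)\cap L^q((0,T),\Vq(\had)^3)$. Heuristically this claim is an integration by parts in time over the non-cylindrical domain $Q$: for $\bd{\phi}\in\D(\Qhad)^3$ one writes $\langle\partial_t\bd{\widetilde{\eta}},\bd{\phi}\rangle=-\int_{\Qhad}\bd{\widetilde{\eta}}\cdot\partial_t\bd{\phi}=-\int_Q\bd{\eta}\cdot\partial_t\bd{\phi}=\int_Q\partial_t\bd{\eta}\cdot\bd{\phi}-\int_{\partial Q}(\bd{\eta}\cdot\bd{\phi})\,n_t\dm S$, with $n_t$ the time component of the outer unit normal to $\partial Q$; the boundary term vanishes because $\bd{\phi}$ is compactly supported in $(0,T)\times\had$, which removes the flat faces $\{0\}\times\Omega_0$ and $\{T\}\times\Omega_T$, and because $\bd{\eta}$ has vanishing trace on the lateral face $\Sigma$ — this last point being exactly what the condition $\bd{\eta}(t)\in\Vq(\Omega_t)^3$ for a.e.\ $t$ is meant to encode.

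I expect the genuinely delicate point to be making this integration by parts rigorous, for two reasons: $Q$ is only a moving Lipschitz domain, so the Gauss--Green formula must be invoked on $Q$ itself (which is legitimate, since $\bd{\eta}\in W^{1,r}(Q)^3$ with $r:=\min(2,q)>1$ and $Q$ is the image of the cylinder $(0,T)\times\Omega$ under the $C^1$-diffeomorphism $(t,x)\mapsto(t,\bd{\varphi}(t,x))$, hence Lipschitz), and one must also argue that ``$\bd{\eta}(t)$ has zero trace on $\partial\Omega_t$ for a.e.\ $t$'' forces ``$\bd{\eta}$ has zero trace on $\Sigma$'', which couples the $t$-slicing of traces with the trace on the slanted surface $\Sigma$. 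To bypass both difficulties I would instead argue by density. By Proposition~\ref{thm:density:H1L2capLqVq}, the smooth fields of $\espX_q$ vanishing in a neighborhood of $\Sigma$ form a dense subspace; for such a field $\bd{\eta}_k$ the zero extension $\bd{\widetilde{\eta}}_k$ is smooth on $\Qhad$, vanishes near $\partial\had$, is divergence-free, satisfies $\partial_t\bd{\widetilde{\eta}}_k=\widetilde{\partial_t\bd{\eta}_k}$ by inspection, and has $\bd{\widetilde{\eta}}_k(T,\cdot)=0$, so trivially $\bd{\widetilde{\eta}}_k\in\espX_q(\had)$ and $E$ is an isometry on this dense subspace. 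Since $\Vq(\had)$ is closed in $W^{1,q}(\had)^3$ and the endpoint condition $(\cdot)(T,\cdot)=0$ is closed (evaluation at $T$ being continuous on $H^1((0,T),L^2(\had)^3)$), the target $\espX_q(\had)$ is a closed subspace of $H^1((0,T),L^2(\had)^3)\cap L^q((0,T),\Vq(\had)^3)$, so $E$ extends to an isometry $\overline{E}\colon\espX_q\to\espX_q(\had)$. Finally, for general $\bd{\eta}\in\espX_q$, choosing $\bd{\eta}_k\to\bd{\eta}$ in $\espX_q$ one has $\bd{\widetilde{\eta}}_k\to\overline{E}\bd{\eta}$ in $L^2((0,T),L^2(\had)^3)$ while at the same time $\lV\bd{\widetilde{\eta}}_k-\bd{\widetilde{\eta}}\rV_{L^2((0,T),L^2(\had))}=\lV\bd{\eta}_k-\bd{\eta}\rV_{L^2((0,T),L^2(\Omega_t))}\to0$; hence $\overline{E}\bd{\eta}=\bd{\widetilde{\eta}}$, and therefore $\bd{\widetilde{\eta}}\in\espX_q(\had)$, which in particular yields $\bd{\widetilde{\eta}}(T,\cdot)=0$.
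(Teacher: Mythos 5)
The paper does not actually prove this proposition: it follows the statement with a remark declaring the spatial part ($\bd{\widetilde{\eta}} \in L^q \Vq(\had)^3$) to be clear and delegating the $H^1 L^2$ part to an external result, \cite[Lem.~2.11]{nagele2015monotone}, on zero extensions of functions in $H^1 L^2(\Omega_t)^3 \cap L^p W^{1,1}_0(\Omega_t)^3$. Your self-contained route --- reduce to the dense class $\X(Q)$ of smooth fields compactly supported away from $\Sigma$, for which the zero extension trivially lies in $\espX_q(\had)$ with $\partial_t \bd{\widetilde{\eta}}_k = \widetilde{\partial_t \bd{\eta}_k}$, then pass to the limit --- is therefore genuinely different, and it has the merit of reusing Proposition~\ref{thm:density:H1L2capLqVq}, which the paper proves anyway; there is no circularity, since the appendix proof of that density result works on the pulled-back cylinder $(0,T)\times\Omega$ and never invokes the extension proposition. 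Your diagnosis of where the difficulty lives (the lateral trace on the slanted surface $\Sigma$, which is what the cited Nägele lemma handles) is also correct.

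Two points need repair, though neither sinks the approach. First, Proposition~\ref{thm:density:H1L2capLqVq} is stated only for $2 \leq p < \infty$, whereas the present proposition concerns $\espX_q$ with $6/5 < q < 2$; you must either check that the appendix construction goes through for this range (plausible, since $W^{1,q}_0(\Omega) \hookrightarrow L^2(\Omega)$ in dimension $3$ for $q > 6/5$, but not free as stated) or apply the proposition only to the exponent $p \geq (5q/6)' \geq 2$ for which it is actually used downstream. Second, and more substantively, the density result gives $\partial_t \bd{\eta}_k \rightharpoonup \partial_t \bd{\eta}$ only \emph{weakly} in $L^2 L^2(\Omega_t)$, so the subspace $\X(Q)$ is not known to be dense in the graph norm of $\espX_q$, and ``extend the isometry $E$ by continuity'' is not literally available. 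The conclusion survives because weak convergence suffices for a membership statement: $(\widetilde{\partial_t \bd{\eta}_k})_k$ is bounded in $L^2(\Qhad)^3$, hence has a weak limit $\bd{g}$ along a subsequence; testing against $\partial_t\bd{\phi}$ for $\bd{\phi} \in \D(\Qhad)^3$ and using the strong convergence $\bd{\widetilde{\eta}}_k \to \bd{\widetilde{\eta}}$ in $L^p(\Qhad)^3$ identifies $\bd{g} = \partial_t \bd{\widetilde{\eta}}$ in $\D'(\Qhad)$, with $\bd{g} = 0$ a.e.\ outside $Q$ and $\bd{g} = \partial_t\bd{\eta}$ on $Q$; and $\espX_q(\had)$ is a closed linear, hence weakly closed, subspace of $H^1 L^2(\had)^3 \cap L^q \Vq(\had)^3$, so the weak limit $\bd{\widetilde{\eta}}$ belongs to it, including the endpoint condition at $t = T$. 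Your final paragraph already performs essentially this identification, so the fix is a rephrasing of the penultimate step rather than a new idea; but as written, the appeal to an isometric extension over a merely weakly dense subspace is a gap.
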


\begin{remark} ~
\begin{itemize}
    \item It is clear that $\bd{\widetilde{\eta}} \in L^p \Vq ( D )^3$. 
\item We refer to \cite[Lem. 2.11]{nagele2015monotone}, ensuring that the zero extension of a function in $ H^1 L^2 ( \Omega_t )^3 \cap L^p W^{1,1}_0 ( \Omega_t )^3$ to $\RR^{\dn}$ lies in the space $H^1 L^2 (\RR^{\dn})^3 \inj C ( (0,T) , L^2 (\RR^{\dn})^3 )$.
\end{itemize}
\end{remark}

\smallskip

From this, we have $ \forall \bd{\eta}  \in \espX_p^\pmS$, still denoting by $\bd{\eta}$ its zero extension to $\Qhad$:  
\begin{align}
\label{eq:egalite:extension:1}
\langle \BT_{\vel^\pmS} ( \widetilde{\ww}^\pmS  ) , \bd{\eta} \rangle_{ W^{1,p}(\had) } 
&=   \langle \BT_{\vel^\pmS} ( \ww^\pmS  ) , \bd{\eta} \rangle_{W^{1,p}(\Omega^\pmS_t)} , 
&&& ( \widetilde{\ww}^\pmS  , \partial_t \bd{\eta} )_{ L^2 (\had) } 
&=  (  \ww^\pmS  , \partial_t \bd{\eta} )_{L^2( \Omega^\pmS_t )} , \\  
\langle \Bg_{\vel^\pmS} ( \widetilde{\ww}^\pmS )  , \bd{\eta} \rangle_{W^{1,p}(\had) } 
&=   \langle \Bg_{\vel^\pmS} ( \ww^\pmS )  , \bd{\eta} \rangle_{W^{1,p}(\Omega^\pmS_t)} , 
&&&  \langle \bd{f}_{\vel^\pmS} , \bd{\eta} \rangle_{W^{1,p}(\had) }  
&=   \langle \bd{f}_{\vel^\pmS} , \bd{\eta} \rangle_{W^{1,p}(\Omega^\pmS_t)} ,   \\  
\label{eq:egalite:extension:5}
(  \ww^\pmS_{0} , \bd{\eta}(0) )_{ L^2 (\had) } 
 &= ( \ww^\pmS_{0} , \bd{\eta}(0) )_{L^2 ( \Omega^\pmS )} ,  & &&& 
\end{align}
for a.e. $t \in (0,T)$, 
where we recall that $\ww^\pmS_{0} \in L^2 (\had)^3$ is given by $\ww^\pmS_{0} := \uun_0 - \vel^{\pmS} (0)$.  
Passing to the limit as $\pmR\rightarrow \infty$ in \eqref{pbm:blood:weak:moving-dom:wnN} we obtained \eqref{pbm:blood:weak:moving-dom:wn},
and then from \eqref{eq:egalite:extension:1}-\eqref{eq:egalite:extension:5} we have: 
\begin{multline}
\label{eq:wn:momentum}
- \int_0^T ( \widetilde{\ww}^\pmS  , \partial_t \bd{\eta} )_{ L^2 ( \had )} 
+ \int_0^T ( \BT_{\vel^\pmS} ( \widetilde{\ww}^\pmS  ) , \bd{\eta} )_{W^{1,p}( \had )}
+ \int_0^T ( \Bg_{\vel^\pmS} ( \widetilde{\ww}^\pmS )  , \bd{\eta} )_{W^{1,p}( \had )}  \\
=
\int_0^T ( \bd{f}_{\vel^\pmS} , \bd{\eta} )_{W^{1,p}( \had )} 
+ ( \ww^\pmS_{0} , \bd{\eta}(0) )_{L^2 ( \had )} , 
\quad
\forall \bd{\eta} \in \espX_p^\pmS .
\end{multline}

Now, let $\bd{\eta} \in \espX_p$ such that $\supp \bd{\eta} \subset \overline{ Q } \setminus \Sigma$. 
As we saw in Section~\ref{sec:shapo:setting} Lemma~\ref{thm:lem:compact-in-Q}, there exists a $\pmS_0 \in \NN$ such that for all $\pmS \geq \pmS_0$, $\bd{\eta} \in \espX_p^\pmS$. 
Thus, from the limits written in \eqref{eq:weaklim:tild:w:1}-\eqref{eq:weaklim:tild:w:5}, 
assumption \ref{item:u0:cond:3}, 
convergence of $(\vel^{\pmS})_{\pmS \geq 0}$,  
and \eqref{eq:wn:momentum}, we have that:
\begin{multline}
\label{eq:wn:momentum:lim:1}
- \int_0^T ( \widetilde{\ww}  , \partial_t \bd{\eta} )_{L^2 ( \Omega_t )} 
+ \int_0^T \langle \bd{\chi}_{\vel} , \bd{\eta} \rangle_{W^{1,p}( \Omega_t )}
+ \int_0^T \langle \mathbf{H}_{\vel}  , \bd{\eta} \rangle_{W^{1,p}( \Omega_t )}  \\
=
\int_0^T \langle \bd{f} - \partial_t \vel , \bd{\eta} \rangle_{W^{1,p}( \Omega_t )} 
+ ( \ww_{0} , \bd{\eta}(0) )_{L^2(\Omega)}  .
\end{multline}

\subsubsection{Identification of $\mathbf{H}_{\vel}$}

In order to identify nonlinear terms, classical compactness lemmas cannot be directly applied in the context of moving domains. 
Among other methods, the Landes–Mustonen compactness principle is used in \cite{nagele_ruzicka2018} to get strong convergence of the velocities in some Lebesgue space, and thus identify the convective term. 
Because of the sequence of moving domains, we cannot apply this method directly. 
Indeed, we need to write \eqref{eq:wn:momentum:lim:1} for test functions compactly supported in $Q$, and on such subsets, the velocities do not have zero-traces, so that the Leray–Helmholtz decomposition does not hold. 
There are other generalizations of compactness principles applied to moving domains (see, e.g. \cite{moussa2016aubin-lions, muha2019generalization}), but they don't apply to the present shape optimization framework either.  
To overcome this, we apply a local pressure representation method, based on \cite{wolf2017local-pressure}, 
that allows us to identify the limit of the convective term without strong convergence. 

We start with a preliminary step on results relating to local pressure representation. 
The following proposition combines results from
\cite[Lem. 6.3. \& Cor. 6.5.]{wolf2017local-pressure}.

\begin{proposition}
\label{thm:prop:local-pressure}
Let $J$ be a time interval, 
and let $G$ and $K$ be two bounded open sets of $\RR^{\dn}$ such that $G \Ksub K$ and $G$ has a $C^2$-boundary. 
Let $\ww \in L^q ( J , W_{\div}^{1,q} (K)^{\dn})$ and $\FF \in (L^p ( J , W_0^{1,p} (K)^{\dn})'$ satisfy the following:
\begin{equation*}
\int_{J \times K} \ww   \cdot \partial_t \bd{\psi}  \dm x \dm t 
= \int_J \langle \FF (t) ,  \bd{\psi}(t  ) \rangle_{W^{1,p}_0(K)} \dm t ,
\quad
\forall \bd{\psi} \in C^{\infty}_{0,\div_x} ( J \times K ) ,  
\end{equation*}
where $ C^{\infty}_{0,\div_x} ( J \times K ) := \lbrace \bd{\psi} \in C^{\infty}_0 ( J \times K ) ^3 \mid \div_x ( \bd{\psi} ) =0 \rbrace$.  
Then there exists local pressures $p_0 \in L^{p'} ( J \times G )$ and $p_h \in L^q ( J , W^{1,q} (G))$ such that the following holds: 
\begin{multline}
\label{eq:local-pressure:1}
\int_{J \times G} ( \ww + \nabla p_h )   \cdot \partial_t \bd{\psi}  \dm x \dm t 
= \int_J \langle \FF (t) ,  \bd{\psi}(t  ) \rangle_{W^{1,p}_0(G)} \dm t  \\
- \int_{J \times G} p_0 \div ( \bd{\psi} ) \dm x \dm t  ,
\quad
\forall \bd{\psi} \in C^{\infty}_{0} ( J \times G ) .
\end{multline}
Furthermore, there exists a constant $c(G)>0$ such that, for a.e. $t \in J$, we have the following  estimations 
 (first inequality holds if we also have $\ww (t) \in L^2 (G)$):  
\begin{equation}
\label{eq:local-pressure:2}
\begin{aligned}
\lV p_h (t) \rV_{L^2 (G)} 
& \leq c(G) \lV \ww (t) \rV_{L^2 (G)} , \\
\lV p_h (t) \rV_{W^{1,q} (G)} 
& \leq c(G) \lV \ww (t) \rV_{L^q (G)} , \\
\lV p_0 (t) \rV_{L^{p'} (G)} 
& \leq c(G) \lV \FF (t) \rV_{W_0^{1,p} (G)'} ,
\end{aligned}
\end{equation}
and $p_h$ is harmonic, so that we finally have for all $1\leq r \leq \infty$ (see, e.g. \cite{diening_ruzicka_wolf2010existence}): 
\begin{equation}
\label{eq:local-pressure:2bis}
\forall V \Ksub G , 
\quad 
\lV p_h (t) \rV_{W^{2,r} (V)} 
 \leq c(V) \lV  p_h (t)  \rV_{L^2 (G)} . 
\end{equation}
\end{proposition}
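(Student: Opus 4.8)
The plan is to reduce everything to the two cited results of Wolf — namely \cite[Lem.~6.3]{wolf2017local-pressure} for the existence of the local pressures $p_0$ and $p_h$ together with \eqref{eq:local-pressure:1}, and \cite[Cor.~6.5]{wolf2017local-pressure} for the estimates in \eqref{eq:local-pressure:2} — and then to add the interior elliptic regularity for $p_h$ separately. So the proof is really an assembly argument rather than a from-scratch derivation, and I would say so explicitly: \textit{Steps~1--2 are a direct quotation of \cite{wolf2017local-pressure}; the only point requiring a short independent argument is \eqref{eq:local-pressure:2bis}.}

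First I would set up the hypotheses so that Wolf's lemma applies verbatim. Given $\ww \in L^q(J,W^{1,q}_{\div}(K)^{\dn})$ and $\FF \in (L^p(J,W_0^{1,p}(K)^{\dn}))'$ satisfying the divergence-free weak identity against all $\bd{\psi} \in C^\infty_{0,\div_x}(J\times K)$, the key structural input is that $G \Ksub K$ with $\partial G \in C^2$; this smoothness of the smaller domain is exactly what Wolf needs to build the Bogovskii-type operator on $G$ and to split the distributional pressure $\nabla \pi$ (which exists on $K$ by the de Rham argument) into a regular part $\nabla p_h \in L^q(J,L^q(G))$ coming from the "good" part of $\FF$ plus $\ww$, and a rougher part $\nabla p_0$ with $p_0 \in L^{p'}(J\times G)$ coming from the negative-order part of $\FF$. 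Passing from the divergence-free test identity on $K$ to \eqref{eq:local-pressure:1}, which is tested against \emph{all} $\bd{\psi} \in C^\infty_0(J\times G)$ (not just solenoidal ones), is precisely where the two pressures are produced: $p_h$ is chosen harmonic in $x$ for a.e.\ $t$, absorbing the solenoidal-projection defect, and $p_0$ captures $\div\bd\psi$. I would then record the three bounds of \eqref{eq:local-pressure:2} as the pointwise-in-$t$ continuity of the (linear in the data) construction, with the caveat on the first line that $\ww(t)\in L^2(G)$ is needed there since the construction of $p_h$ as an $L^2$-harmonic field requires $\ww(t)\in L^2(G)$; the $W^{1,q}(G)$ bound needs only $\ww(t)\in L^q(G)$, and the $p_0$ bound is the Bogovskii/Ne\v{c}as estimate.

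The one genuinely independent point is \eqref{eq:local-pressure:2bis}. Here I would argue: for a.e.\ fixed $t$, $p_h(t)$ is harmonic on $G$ and lies in $L^2(G)$; by interior elliptic regularity (Weyl's lemma plus Caccioppoli/mean-value estimates for harmonic functions, as in the standard references on the local pressure, e.g.\ \cite{diening_ruzicka_wolf2010existence}), for any $V \Ksub G$ all derivatives of $p_h(t)$ are controlled on $V$ by the $L^2(G)$ norm — concretely, $\lV p_h(t)\rV_{W^{2,r}(V)} \le c(V)\lV p_h(t)\rV_{L^2(G)}$ for every $1\le r\le\infty$, with $c(V)$ depending only on $V$, $G$, $r$, using $\dist(V,\partial G)>0$. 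This step has no subtlety beyond choosing an intermediate set $V \Ksub V' \Ksub G$ and chaining two interior estimates.

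The main obstacle — though it is not an obstacle in \emph{this} proof, since we are only collecting the statement — is conceptual rather than technical: understanding why one must pass through this local pressure decomposition at all, namely that on subsets of the moving cylinder $Q$ the velocity does not vanish on the lateral boundary, so the global Leray--Helmholtz projection and hence the usual Aubin--Lions/Landes--Mustonen machinery are unavailable, and \eqref{eq:local-pressure:1}--\eqref{eq:local-pressure:2bis} is exactly the substitute that will later let one recover compactness of $\widetilde{\ww}^\pmS + \nabla p_h^\pmS$ in the interior and thereby identify $\mathbf{H}_\vel$. For the proof of Proposition~\ref{thm:prop:local-pressure} itself, I would simply cite \cite[Lem.~6.3 \& Cor.~6.5]{wolf2017local-pressure} for \eqref{eq:local-pressure:1}--\eqref{eq:local-pressure:2} and interior harmonic regularity for \eqref{eq:local-pressure:2bis}, keeping the proof to a few lines.
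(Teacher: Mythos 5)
Your proposal matches the paper's treatment exactly: the paper offers no independent proof of this proposition, but simply quotes \cite[Lem.~6.3 \& Cor.~6.5]{wolf2017local-pressure} for \eqref{eq:local-pressure:1}--\eqref{eq:local-pressure:2} and invokes interior regularity of harmonic functions (citing \cite{diening_ruzicka_wolf2010existence}) for \eqref{eq:local-pressure:2bis}, which is precisely your assembly argument. Your additional remarks on why the hypotheses of Wolf's lemma are met and on the role of the decomposition in the later compactness argument are correct and consistent with the paper's discussion preceding the proposition.
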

We complete the preliminary step 
with the following result, whose proof is inspired from 
\cite{bucur_feireisl_necasova_wolf2008asymp}. 
\begin{proposition}
\label{thm:prop:identif:convect}
Let $(\widetilde{\ww}^\pmS)_{\pmS \geq 0}$ be the sequence of functions satisfying \eqref{eq:wn:momentum} given in Section~\ref{sec:exist:unif:limit}, weakly converging in the sense of \eqref{eq:weaklim:tild:w:1} to $\widetilde{\ww}$ which satisfies \eqref{eq:wn:momentum:lim:1}. 
Let $J$ be a time interval and $G\Ksub \had$ be a $C^2$ open set such that $J \times \overline{G} \subset Q$. 
Then, recalling that $( \widetilde{\ww}^\pmS + \vel^\pmS ) \otimes ( \widetilde{\ww}^\pmS  + \vel^\pmS ) 
\rightharpoonup 
\mathbf{H}_{\vel}$ weakly in $L^{ 5q / 6 } (  \Qhad  )^{3 \times 3}$, we have: 
\begin{equation}
\label{eq:local-pressure:3}
\int_J \int_G \mathbf{H}_{\vel} : \nabla \bd{\psi} \dm x \dm t 
= 
\int_J \int_G ( \widetilde{\ww} + \vel ) \otimes ( \widetilde{\ww}   + \vel  )  : \nabla \bd{\psi} \dm x \dm t , 
\quad 
\forall \bd{\psi} \in C^{\infty}_{0,\div_x} ( J \times G ). 
\end{equation}
\end{proposition}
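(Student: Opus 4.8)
The plan is to use the local pressure representation from Proposition~\ref{thm:prop:local-pressure} to upgrade the weak convergence $\widetilde{\ww}^\pmS + \vel^\pmS \rightharpoonup \widetilde{\ww} + \vel$ into something strong enough to pass to the limit in the quadratic term $\mathbf{H}_\vel$, \emph{locally} on $J \times G$. First I would fix $J$ and $G \Ksub \had$ with $G$ of class $C^2$ and $J \times \overline{G} \subset Q$; by Lemma~\ref{thm:lem:compact-in-Q} there is $\pmS_0$ such that $J \times \overline{G} \subset Q^\pmS$ for all $\pmS \geq \pmS_0$, so on this cylinder $\widetilde{\ww}^\pmS = \ww^\pmS$ and the equation \eqref{eq:wn:momentum} holds with genuine $W^{1,p}$ pairings over $G$. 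Testing \eqref{eq:wn:momentum} with $\bd{\psi} \in C^\infty_{0,\div_x}(J \times G)$ kills the pressure-type term and, since $\partial_t \bd{\psi}$ has support in the interior, the time-derivative term reads $\int_{J\times G} \widetilde{\ww}^\pmS \cdot \partial_t \bd{\psi}$, so $\widetilde{\ww}^\pmS$ together with $\FF^\pmS := \bd{f}_{\vel^\pmS} - \mathrm{div}\,\BT_{\vel^\pmS}(\widetilde{\ww}^\pmS) - \mathrm{div}\,\Bg_{\vel^\pmS}(\widetilde{\ww}^\pmS)$ (read as an element of $(L^p(J,W_0^{1,p}(G)^3))'$) satisfies the hypothesis of Proposition~\ref{thm:prop:local-pressure}. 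Note $\FF^\pmS$ is uniformly bounded in that dual space thanks to \eqref{eq:estim:blood:wn}, the bound $\stress(\Gs\widetilde{\ww}^\pmS+\Gs\vel^\pmS) \in L^{q'}$ from the growth condition, and the bound on $(\widetilde{\ww}^\pmS+\vel^\pmS)^{\otimes 2}$ in $L^{5q/6}$.

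Then I would apply Proposition~\ref{thm:prop:local-pressure} to get local pressures $p_0^\pmS \in L^{p'}(J\times G)$ and $p_h^\pmS \in L^q(J, W^{1,q}(G))$ satisfying \eqref{eq:local-pressure:1}, with the uniform bounds \eqref{eq:local-pressure:2}: $\|p_h^\pmS\|_{L^\infty_t L^2(G)} \lesssim \|\widetilde{\ww}^\pmS\|_{L^\infty_t L^2(G)} \leq C$, $\|p_h^\pmS\|_{L^q_t W^{1,q}(G)} \lesssim C$, and $\|p_0^\pmS\|_{L^{p'}(J\times G)} \lesssim \|\FF^\pmS\|\leq C$, plus the interior elliptic regularity \eqref{eq:local-pressure:2bis} $\|p_h^\pmS(t)\|_{W^{2,r}(V)} \lesssim \|p_h^\pmS(t)\|_{L^2(G)}$ for any $V \Ksub G$ and any $r$. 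Set $\vv^\pmS := \widetilde{\ww}^\pmS + \nabla p_h^\pmS$ on $J\times G$. From \eqref{eq:local-pressure:1} the time derivative $\partial_t \vv^\pmS = \FF^\pmS + \nabla p_0^\pmS$ is bounded in, say, $(L^{\sigma}(J, W_0^{1,\sigma}(V)^3))'$ for suitable $\sigma$, while $\vv^\pmS$ itself is bounded in $L^q(J, W^{1,q}(V)^3) \cap L^2(J\times V)$ uniformly in $\pmS$. By the Aubin–Lions lemma (classical, on the fixed domain $V$), a subsequence of $\vv^\pmS$ converges strongly in $L^2(J\times V)^3$; combined with the elliptic bound \eqref{eq:local-pressure:2bis}, which makes $\nabla p_h^\pmS$ bounded in $L^\infty_t W^{1,r}(V)$ hence relatively compact in $C(\overline{J}; L^2(V))$ by Aubin–Lions again (or Arzelà–Ascoli in the time variable), I would deduce $p_h^\pmS \to p_h$ with $\nabla p_h^\pmS \to \nabla p_h$ strongly in $L^2(J\times V)^3$, and therefore $\widetilde{\ww}^\pmS = \vv^\pmS - \nabla p_h^\pmS \to \widetilde{\ww}$ strongly in $L^2(J\times V)^3$ for every $V \Ksub G$. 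Since $\vel^\pmS \to \vel$ uniformly, this gives $(\widetilde{\ww}^\pmS + \vel^\pmS)\otimes(\widetilde{\ww}^\pmS+\vel^\pmS) \to (\widetilde{\ww}+\vel)\otimes(\widetilde{\ww}+\vel)$ strongly in $L^1(J\times V)^{3\times3}$, and by the uniform $L^{5q/6}$ bound and Vitali, this strong limit must coincide with the weak limit $\mathbf{H}_\vel$ a.e. on $J\times V$; exhausting $G$ by such $V$ and testing against $\bd{\psi} \in C^\infty_{0,\div_x}(J\times G)$ (whose support is a compact subset of $J\times G$) yields \eqref{eq:local-pressure:3}.

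The main obstacle I anticipate is the bookkeeping of the functional-analytic exponents so that the Aubin–Lions argument actually closes: the time derivative $\partial_t \vv^\pmS$ involves $\mathrm{div}\,(\widetilde{\ww}^\pmS\otimes\widetilde{\ww}^\pmS)$ sitting only in a negative Sobolev space built on $L^{5q/6}$, and for $q$ close to $6/5$ one has $5q/6$ close to $1$, so one must check carefully that the chain of embeddings $W^{1,q}(V) \inj\inj L^\sigma(V) \inj (\text{dual space containing }\partial_t\vv^\pmS)$ is valid in three dimensions — this is where the restriction $q > 6/5$ is genuinely used. A secondary technical point is justifying that $\partial_t p_h^\pmS$ (equivalently, the time regularity of $p_h^\pmS$) is controlled well enough to get compactness of $\nabla p_h^\pmS$ in time; here the representation $\partial_t(\widetilde{\ww}^\pmS + \nabla p_h^\pmS) = \FF^\pmS + \nabla p_0^\pmS$ shows $\partial_t \nabla p_h^\pmS$ lies, after subtracting the $L^q_t W^{1,q}$-bounded $\partial_t\widetilde{\ww}^\pmS$ part handled distributionally, in a dual space, and one leans on \eqref{eq:local-pressure:2bis} to trade spatial regularity for the needed compactness. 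Everything else — the vanishing of the pressure term under $\div_x$-free test functions, the identification of the strong $L^1$ limit with the weak $L^{5q/6}$ limit via Vitali, the exhaustion of $G$ — is routine once the compactness of $\widetilde{\ww}^\pmS$ on interior subcylinders is in hand.
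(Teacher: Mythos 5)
Your setup is the right one and matches the paper up to the application of the Aubin--Lions lemma: you correctly localize on $J\times \overline{G}\subset Q^{\pmS}$, invoke Proposition~\ref{thm:prop:local-pressure} with a uniformly bounded $\FF^{\pmS}$, and obtain strong convergence of the \emph{sum} $\vv^{\pmS}=\widetilde{\ww}^{\pmS}+\nabla p_h^{\pmS}$ in $L^{2\sigma_0}(J\times V)$. The genuine gap is the next step, where you claim that $\nabla p_h^{\pmS}$ \emph{itself} converges strongly (and hence that $\widetilde{\ww}^{\pmS}=\vv^{\pmS}-\nabla p_h^{\pmS}$ does too). The bound \eqref{eq:local-pressure:2bis} gives uniform \emph{spatial} regularity $L^\infty_t W^{2,r}(V)$ for $p_h^{\pmS}$, but neither Aubin--Lions nor Arzel\`a--Ascoli can run without some uniform control in \emph{time} — a bound on $\partial_t \nabla p_h^{\pmS}$ in some Banach space, or uniform equicontinuity of $t\mapsto \nabla p_h^{\pmS}(t)$ in $L^2(V)$ — and no such control is available: the whole point of the decomposition is that only $\partial_t(\widetilde{\ww}^{\pmS}+\nabla p_h^{\pmS})$ is controlled, while $\nabla p_h^{\pmS}$ absorbs exactly the distributional-in-time part of $\partial_t\widetilde{\ww}^{\pmS}$ that obstructs a direct compactness argument. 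A sequence such as $p^{\pmS}(t,x)=\sin(\pmS t)\,h(x)$ with $h$ harmonic is bounded in $L^\infty_t W^{2,r}(V)$ for every $r$ yet has no strongly convergent subsequence in $L^2(J\times V)$, so spatial regularity cannot be ``traded'' for time compactness as you suggest. If strong convergence of $\widetilde{\ww}^{\pmS}$ on interior cylinders were obtainable this way, the elaborate machinery of the local pressure method would be unnecessary.

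The paper (following \cite{bucur_feireisl_necasova_wolf2008asymp}) closes this gap algebraically rather than by compactness of $\nabla p_h^{\pmS}$. Writing
\begin{equation*}
(\widetilde{\ww}^{\pmS}+\vel^{\pmS})\otimes(\widetilde{\ww}^{\pmS}+\vel^{\pmS})
=\vv^{\pmS}_{\vel}\otimes(\widetilde{\ww}^{\pmS}+\vel^{\pmS})
-\nabla p_h^{\pmS}\otimes \vv^{\pmS}_{\vel}
+\nabla p_h^{\pmS}\otimes\nabla p_h^{\pmS},
\qquad
\vv^{\pmS}_{\vel}:=\widetilde{\ww}^{\pmS}+\vel^{\pmS}+\nabla p_h^{\pmS},
\end{equation*}
each of the first two terms is a product of one strongly and one weakly convergent factor, hence passes to the limit when tested against $\nabla\bd{\psi}$, while the third term satisfies $\int_J\int_G \nabla p_h^{\pmS}\otimes\nabla p_h^{\pmS}:\nabla\bd{\psi}\dm x\dm t=0$ for every $\pmS$, because $p_h^{\pmS}$ is harmonic and $\bd{\psi}$ is solenoidal and compactly supported; the same cancellation applied to the (harmonic) weak limit $p_h$ then recombines the surviving limits into $(\widetilde{\ww}+\vel)\otimes(\widetilde{\ww}+\vel)$. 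You need to replace your strong-convergence claim for $\nabla p_h^{\pmS}$ by this decomposition; the rest of your argument (uniform bounds, Vitali/identification of the weak $L^{5q/6}$ limit, exhaustion of $G$) is sound.
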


\begin{proof}
Let us consider some open set $K$ such that $G \Ksub K$ and $J\times \overline{K} \subset Q$. 
From Lemma \ref{thm:lem:compact-in-Q}, there exists $\pmS_0 \in \NN$ such that for all $\pmS \geq \pmS_0$, $J\times \overline{K} \subset Q^{\pmS}$. 
In view of what has been done in Section~\ref{sec:exist:unif:limit}, 
we have that for all $\pmS  \geq \pmS_0$, $\widetilde{\ww}^\pmS$ and $\FF^{\pmS}$ given by \eqref{eq:wn:momentum} satisfy the assumptions of Proposition~\ref{thm:prop:local-pressure}. 
Then, there exist local pressures $p_0^{\pmS}$ and $p_h^{\pmS}$ satisfying \eqref{eq:local-pressure:1} and \eqref{eq:local-pressure:2}, from which we deduce that $  ( \partial_t ( \widetilde{\ww}^\pmS + \nabla p_h^{\pmS} ) )_{\pmS \geq \pmS_0}$ is bounded in $(L^p ( J , W_0^{1,p} (G)^{\dn})'$. 
We recall that from \eqref{eq:estim:blood:wn}, we have that $(\widetilde{\ww}^\pmS )_{\pmS\geq 0}$ is bounded in $L^\infty L^2_{\div} (G)^3$ and $L^q \Vq ( G )^3 \cap \Xq ( G )  $, which is compactly embedded in $ L^{2 \sigma_0} (G)$ for some $2\sigma_0 < 5q/3$. 
From \eqref{eq:local-pressure:2}-\eqref{eq:local-pressure:2bis}, the same properties hold for $\nabla p_h^{\pmS}$ in any $V \Ksub G$. 
Thus, we can  apply the Aubin-Lions compactness Lemma, and we have for some $\sigma_0 < 5q/6$ a not relabeled subsequence such that:
\begin{align}
\label{eq:estim:blood:wn:2}
\widetilde{\ww}^{\pmS} 
+ \nabla p_h^{\pmS}
&\xrightarrow[\quad]{} \widetilde{\ww} +  \nabla p_h
\quad
\text{ strongly in } L^{ 2\sigma_0 } (  J \times V  )^3 , 
\end{align}
noticing that the weak and the pointwise limits being the same in $L^r$ for $1<r<\infty$. 
Now, from the strong convergence of $\vel^\pmS$
given by Definition~\ref{def:cvg:dom-veloc},  
we can write, as it is done in \cite{bucur_feireisl_necasova_wolf2008asymp}, for $\bd{\psi} \in C^{\infty}_{0,\div_x} ( J \times G )$: 
\begin{align*}
\int_J \int_G \mathbf{H}_{\vel} : \nabla \bd{\psi} \dm x \dm t 
&= \lim_{k \rightarrow \infty}  
\int_J \int_G ( \widetilde{\ww}^\pmS + \vel^\pmS ) \otimes ( \widetilde{\ww}^\pmS  + \vel^\pmS ) : \nabla \bd{\psi} \dm x \dm t \\
& = \lim_{k \rightarrow \infty}  
\int_J \int_G ( \widetilde{\ww}^\pmS + \vel^\pmS + \nabla p_h^{\pmS} ) \otimes ( \widetilde{\ww}^\pmS  + \vel^\pmS ) : \nabla \bd{\psi} \dm x \dm t \\
& \ \  - \lim_{k \rightarrow \infty}  
\int_J \int_G \nabla p_h^{\pmS}\otimes ( \widetilde{\ww}^\pmS  + \vel^\pmS  + \nabla p_h^{\pmS} ) : \nabla \bd{\psi} \dm x \dm t \\
& \ \ + \lim_{k \rightarrow \infty}  
\int_J \int_G \nabla p_h^{\pmS} \otimes \nabla p_h^{\pmS} : \nabla \bd{\psi} \dm x \dm t . 
\end{align*} 
The last term cancels because $p_h^{\pmS}$ is harmonic and $\bd{\psi}$ is solenoidal, while the other terms converge to the expected limit, leading to \eqref{eq:local-pressure:3}.  
\end{proof}

Now we can move on to identifying the convective term in \eqref{eq:wn:momentum:lim:1}. 
Let $\bd{\eta} \in \espX_p$ such that $\supp \bd{\eta} \subset \overline{ Q } \setminus \Sigma$. 
Because the support is compactly included in $\overline{ Q } \setminus \Sigma$, there exits times $0 = t_0 < t_1 < \cdots < t_{2N - 1} < t_{2N} = T$ for some $N \in \NN$, and open sets $G_i \Ksub K_i \Ksub \had$ for $i=0 , \cdots , 2N-2$, such that: 
\begin{equation*}
\supp \bd{\eta} 
\subset \bigcup_{i=0}^{2N-2} J_i \times {G_i} 
\subset
 \bigcup_{i=0}^{2N-2} J_i \times \overline{K_i} 
\subset Q ,
\quad 
\text{where }
J_i := (t_i , t_{i+2}).
\end{equation*}
We also recall that from Lemma \ref{thm:lem:compact-in-Q}, there is a $\pmS_0 \in \NN$ such that for all $\pmS \geq \pmS_0$:
\begin{equation*}
 \bigcup_{i=0}^{2N-2} J_i \times \overline{K_i} 
\subset Q^{\pmS} .
\end{equation*}
For all $i = 0 , \cdots , 2N -2$, we have that for all $\pmS \geq \pmS_0$, $\widetilde{\ww}^\pmS$ satisfies the assumptions of Proposition~\ref{thm:prop:local-pressure} written for $J_i$, $G_i$, and $K_i$, with $\FF^{\pmS}$ given by \eqref{eq:wn:momentum}. 
Then we can deduce from Proposition~\ref{thm:prop:identif:convect} that we have \eqref{eq:local-pressure:3} for $J_i \times G_i$.  
For retrieving the initial data in \eqref{eq:wn:momentum:lim:1}, we need to be more precise.   Let us start to define the interval: 
\begin{equation*}
J_{-1} := (- t_{1} , t_{1}).
\end{equation*} 
We show the following.
\begin{lemma}
\label{thm:lem:local-pressure}
It holds $\forall \bd{\psi} \in C^{\infty}_{0,\div_x} ( [0 ,t_1 )  )  \times G_0 )$: 
\begin{equation}
\label{eq:local-pressure:4}
\int_0^{t_1} \int_{G_0} \mathbf{H}_{\vel} : \nabla \bd{\psi} \dm x \dm t 
= 
\int_0^{t_1} \int_{G_0} ( \widetilde{\ww} + \vel ) \otimes ( \widetilde{\ww}   + \vel  )  : \nabla \bd{\psi} \dm x \dm t . 
\end{equation}
\end{lemma}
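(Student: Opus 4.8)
The statement of Lemma~\ref{thm:lem:local-pressure} is essentially Proposition~\ref{thm:prop:identif:convect} but with the time interval $J$ replaced by a half-open interval $[0,t_1)$ touching the initial time, and with the spatial set $G_0$ fixed. The plan is to mimic the proof of Proposition~\ref{thm:prop:identif:convect} closely, paying attention to the fact that test functions $\bd{\psi} \in C^\infty_{0,\div_x}([0,t_1)\times G_0)$ need not vanish near $t=0$, so a direct application of the local pressure representation (which is proved for test functions compactly supported in the open cylinder $J \times K$) is not immediate. First I would pick an auxiliary open set $K_0$ with $G_0 \Ksub K_0 \Ksub \had$ and such that $[0,t_1]\times \overline{K_0} \subset \underline{Q}$, so that by Lemma~\ref{thm:lem:compact-in-Q} there is $\pmS_0$ with $[0,t_1]\times\overline{K_0}\subset \underline{Q}^\pmS$ for all $\pmS\geq\pmS_0$. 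Then I would enlarge the time interval slightly to the symmetric interval $J_{-1}=(-t_1,t_1)$: the functions $\widetilde{\ww}^\pmS$ are only defined for $t\in(0,T)$, but we may extend them by $\bd{0}$ (or by their value at $t=0$) to negative times, or — cleaner — work directly on $[0,t_1)$ and invoke a version of Proposition~\ref{thm:prop:local-pressure} with the initial-time boundary included. The point of introducing $J_{-1}$ (as the excerpt already announces) is to make the cylinder two-sided so the pressure construction applies, and then restrict back to $[0,t_1)$.

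**Main steps.** (1) Apply Proposition~\ref{thm:prop:local-pressure} on $J_{-1}\times K_0$ (after the extension to negative times) to the pair $(\widetilde{\ww}^\pmS, \FF^\pmS)$ given by \eqref{eq:wn:momentum} — which satisfies the required identity for solenoidal test functions compactly supported in $J_{-1}\times K_0$ — obtaining local pressures $p_0^\pmS \in L^{p'}(J_{-1}\times G_0)$ and $p_h^\pmS \in L^q(J_{-1},W^{1,q}(G_0))$ with the uniform bounds \eqref{eq:local-pressure:2}. (2) Deduce, exactly as in Proposition~\ref{thm:prop:identif:convect}, that $(\partial_t(\widetilde{\ww}^\pmS+\nabla p_h^\pmS))_{\pmS\geq\pmS_0}$ is bounded in $(L^p(J_{-1},W^{1,p}_0(G_0)^{\dn}))'$, while $(\widetilde{\ww}^\pmS+\nabla p_h^\pmS)_{\pmS}$ is bounded in $L^\infty L^2 \cap L^q W^{1,q}$ on $J_{-1}\times V$ for $V\Ksub G_0$ using \eqref{eq:local-pressure:2}–\eqref{eq:local-pressure:2bis}, so the Aubin–Lions lemma yields strong convergence $\widetilde{\ww}^\pmS+\nabla p_h^\pmS \to \widetilde{\ww}+\nabla p_h$ in $L^{2\sigma_0}(J_{-1}\times V)^3$ for some $\sigma_0<5q/6$. (3) Combine with the strong convergence $\vel^\pmS\to\vel$ and the algebraic manipulation of the quadratic term used in Proposition~\ref{thm:prop:identif:convect} (adding and subtracting $\nabla p_h^\pmS$ in each factor of the tensor product, with the $\nabla p_h^\pmS\otimes\nabla p_h^\pmS$ term vanishing against the solenoidal $\bd{\psi}$ since $p_h^\pmS$ is harmonic), to pass to the limit and conclude \eqref{eq:local-pressure:4} for all $\bd{\psi}\in C^\infty_{0,\div_x}([0,t_1)\times G_0)$, after restricting from $J_{-1}$ back to $[0,t_1)$.

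**Where the difficulty lies.** The genuinely delicate point is Step (1): justifying that the local pressure representation of Proposition~\ref{thm:prop:local-pressure} (or Wolf's result behind it) is applicable near the initial time $t=0$. One must verify that after extending $\widetilde{\ww}^\pmS$ to $t<0$ the identity \eqref{eq:wn:momentum}-type relation with the extended right-hand side still holds against test functions in $C^\infty_{0,\div_x}(J_{-1}\times K_0)$ — this requires that no spurious distributional contribution at $t=0$ appears, which follows from the fact that $\widetilde{\ww}^\pmS \in H^1 L^2$ has a well-defined time trace and the extension can be chosen continuous across $t=0$ (e.g. even reflection, or the constant extension by $\widetilde{\ww}^\pmS(0)=\widetilde{\ww}^\pmS_0$), so that $\partial_t$ of the extension remains a bounded functional without a Dirac mass. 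Once this is set up correctly, all remaining steps are verbatim repetitions of the argument in Proposition~\ref{thm:prop:identif:convect}; the Aubin–Lions application and the harmonic-pressure cancellation are routine given the uniform estimates already established. I would therefore devote most of the written proof to carefully stating the extension and the resulting validity of the hypotheses of Proposition~\ref{thm:prop:local-pressure} on the two-sided cylinder, then simply refer back to the proof of Proposition~\ref{thm:prop:identif:convect} for the passage to the limit.
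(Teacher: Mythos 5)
Your proposal is correct and follows essentially the same route as the paper: extend the problem across $t=0$ to the symmetric cylinder $J_{-1}\times K_0$ via reflection, verify that the weak identity holds there with no spurious contribution at the initial time, and then rerun the local-pressure/Aubin--Lions argument of Proposition~\ref{thm:prop:identif:convect} before restricting test functions back to $[0,t_1)$. Two small caveats on the extensions you float as alternatives: the zero extension produces a Dirac mass $\ww_0^\pmS\otimes\delta_0$ and the constant extension by the trace $\widetilde{\ww}^\pmS(0)$ fails the $L^q W^{1,q}$ spatial regularity required by Proposition~\ref{thm:prop:local-pressure} (the trace is only $L^2$), so only the even reflection — the paper's choice — actually works; and the matching bookkeeping you leave implicit is that the right-hand side $\FF^{\pmS}$ must then be extended by the \emph{skew-symmetric} reflection, the parity mismatch being exactly what absorbs the initial-data term $(\ww_0^\pmS,\bd{\eta}(0))$ when the weak formulation is tested against $\bar{\bd{\psi}}(t)-\bar{\bd{\psi}}(-t)$.
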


\begin{proof}
For any function $f \in L^1 ( (0,t_1) \times \had ) $, we define the associated symmetric and skew symmetric time reflected functions $f_{\R}$ and $f_{-\R}$ in $ \in L^1 ( (-t_1,t_1) \times \had )$, given respectively by: 
\begin{equation*}
f_{\R} ( t , x  ) := 
\left\lbrace
\begin{aligned}
& f ( t , x  ) && \text{ if } t \geq 0 , \\
& f ( -t , x  ) && \text{ if } t < 0 , \\
\end{aligned}
\right.
\quad \text{ and } \quad
f_{-\R} ( t , x  ) := 
\left\lbrace
\begin{aligned}
& f ( t , x  ) && \text{ if } t \geq 0 , \\
& -f ( -t , x  ) && \text{ if } t < 0 . \\
\end{aligned}
\right.
\end{equation*}
Considering the time reflected velocity fields $\widetilde{\ww}^\pmS_{\R}$, one can easily check that: 
$$
\widetilde{\ww}^\pmS_{\R} \in L^\infty ( J_{-1} , L^2_{\div} (K_0 )) \cap L^q ( J_{-1} , \Xq ( K_0 ) )
,
$$
and $\widetilde{\ww}^\pmS_{\R} \rightharpoonup \widetilde{\ww}_{\R}$ weakly in  
$L^{ 5q / 3 } (  J_{-1} \times K_0  )^{3 \times 3}$ 
from \eqref{eq:weaklim:tild:w:1},  so that 
$[ ( \widetilde{\ww}^\pmS + \vel^\pmS ) \otimes ( \widetilde{\ww}^\pmS  + \vel^\pmS )]_{\R} 
\rightharpoonup 
\mathbf{H}_{\vel, \R}$ weakly in  
$L^{ 5q / 6 } (  J_{-1}  \times K_0 )^{3 \times 3}$ 
from \eqref{eq:weaklim:tild:w:2}.  
We also check that:
\begin{equation*}
\int_{J_{-1} \times K_0} \widetilde{\ww}^\pmS_{\R}   \cdot \partial_t \bar{\bd{\psi}}  \dm x \dm t 
= \int_{J_{-1}} \langle \FF^{\pmS}_{-\R} (t) ,  \bar{\bd{\psi}} (t  ) \rangle_{W^{1,p}_0(K_0)} \dm t ,
\quad
\forall \bar{\bd{\psi}} \in C^{\infty}_{0,\div_x} ( J_{-1} \times K_0 ) ,
\end{equation*}
the latter expression being obtained from \eqref{eq:wn:momentum}, from the definition of the time reflections, and by change of variable in time.   
Here we also have  that $\FF^{\pmS}_{-\R} \in (L^p ( J_{-1} , W_0^{1,p} (K_0)^{3})'$. 
Thus, assumptions of Propositions~\ref{thm:prop:local-pressure} and \ref{thm:prop:identif:convect} are satisfied, so that we can apply the same procedure following the proof of Propositions~\ref{thm:prop:identif:convect} to obtain:
\begin{equation*}
\int_{J_{-1}} \int_{G_0} \mathbf{H}_{\vel, \R} : \nabla \bar{\bd{\psi}} \dm x \dm t 
= 
\int_J \int_G [( \widetilde{\ww} + \vel ) \otimes ( \widetilde{\ww}   + \vel  ) ]_{\R} : \nabla \bar{\bd{\psi}}  \dm x \dm t , 
\quad 
\forall \bar{\bd{\psi}} \in C^{\infty}_{0,\div_x} ( J_{-1} \times G_0 ). 
\end{equation*}
Finally, for $\bd{\psi} \in C^{\infty}_{0,\div_x} ( [0 ,t_1 )  )  \times G )$, we can consider approximations of $\bd{\psi}_{\R}$ in  $C^{\infty}_{0,\div_x} ( J_{-1} \times G_0 )$, so that passing to the limit in the previous expression, we obtain \eqref{eq:local-pressure:4} by symmetry of the integrands. 
\end{proof}

\medskip

We have now all the ingredients to identify the convective limit. 
Let $( \chi_i (t) )_{-1 \leq i \leq 2N-2}$ be a partition of unity of $[0,T]$ subordinated to the family $( J_i  )_{-1 \leq i \leq 2N-2}$, 
and let us redefine the time intervals $J_i' := J_i$ for $i = 0 , \cdots , 2N - 2$, and $J'_{-1} := [0,t_1)$. 
Then, recalling that $\supp \bd{\eta} 
\subset \bigcup_{i=0}^{2N-2} J_i \times {G_i} $, we have:
\begin{align*}
\int_0^T \langle \mathbf{H}_{\vel}  , \bd{\eta} \rangle_{W^{1,p}( \Omega_t )}  
&:=  \int_0^T \int_{\Omega_t }  \mathbf{H}_{\vel}  : \nabla \bd{\eta} \dm x \dm t \\
&=  \int_0^T \int_{\Omega_t } \Big( \sum_{i=-1}^{2N-2} \chi_i \Big) \mathbf{H}_{\vel}  : \nabla \bd{\eta} \dm x \dm t \\
&= \sum_{i=-1}^{2N-2} \int_{J_i'} \int_{G_i}  \mathbf{H}_{\vel}  : \nabla ( \chi_i \bd{\eta} ) \dm x \dm t ,
\end{align*}
where we have set $G_{-1} := G_0$. 
Noticing that for $i=0, \cdots , 2N-2$, we have that $\chi_i \bd{\eta}$ are  suitable test functions for applying \eqref{eq:local-pressure:3} in $J_i' \times G_i$, and using Lemma~\ref{thm:lem:local-pressure} for $\chi_{-1} \bd{\eta} $ in $J_{-1}' \times G_{-1}$, we can conclude that:
\begin{align}
\int_0^T \langle \mathbf{H}_{\vel}  , \bd{\eta} \rangle_{W^{1,p}( \Omega_t )}  
&= \sum_{i=-1}^{2N-2} \int_{J_i'} \int_{G_i}  ( \widetilde{\ww} + \vel ) \otimes ( \widetilde{\ww}   + \vel  )  : \nabla ( \chi_i \bd{\eta} ) \dm x \dm t  \notag \\
& = \int_0^T \int_{\Omega_t } ( \widetilde{\ww} + \vel ) \otimes ( \widetilde{\ww}   + \vel  ) : \nabla \bd{\eta} \dm x \dm t . 
\label{eq:local-pressure:5}
\end{align}
For any test function as above, this time cutting and local pressure representation can be applied. 
Thus we have  finally shown that for $\bd{\eta} \in \espX_p$ such that $\supp \bd{\eta} \subset \overline{ Q } \setminus \Sigma$, it holds:
\begin{multline}
\label{eq:wn:momentum:lim:2}
- \int_0^T ( \widetilde{\ww}  , \partial_t \bd{\eta} )_{L^2 ( \Omega_t )} 
+ \int_0^T \langle \bd{\chi}_{\vel} , \bd{\eta} \rangle_{W^{1,p}( \Omega_t )}
+ \int_0^T \langle \Bg_{\vel} (\widetilde{\ww})  , \bd{\eta} \rangle_{W^{1,p}( \Omega_t )}  \\
=
\int_0^T \langle \bd{f} - \partial_t \vel , \bd{\eta} \rangle_{W^{1,p}( \Omega_t )}
+ ( \widetilde{\ww}_{0} , \bd{\eta}(0) )_{L^2(\Omega)}  .
\end{multline}

\subsubsection{Identification of the stress}
\label{sec:identif-stress}

We can apply the exact same procedure as in \cite[Sec.~5.2]{nagele_ruzicka2018}. 
Let us give some details. 
We start rewriting the equation in term of $\extu = \widetilde{\ww} + \vel$. 
From \eqref{eq:wn:momentum:lim:2}, if we denote by $\bd{\chi}$ the weak limit of $\stress ( \Gs \extun )$ in $L^{ q' }  (  \Qhad  )^{3 \times 3} $, 
then we have for all $\bd{\eta} \in \espX_p$ such that $\supp \bd{\eta} \subset \overline{ Q } \setminus \Sigma$:
\begin{multline}
\label{eq:wn:momentum:lim:3}
- \int_0^T ( \uu  , \partial_t \bd{\eta} )_{ L^2 (\Omega_t) } 
+ \int_0^T \langle \bd{\chi}  , \bd{\eta} \rangle_{W^{1,p}(\Omega_t)}
+ \int_0^T ( \Bg  ( \uu )  , \bd{\eta} \rangle_{W^{1,p}(\Omega_t)}  \\
=
\int_0^T ( \bd{f} , \bd{\eta} \rangle_{W^{1,p}(\Omega_t)}
+ ( \uu_{0} , \bd{\eta}(0) )_{ L^2 (\Omega)}  .
\end{multline}
Now we define $\hh_\pmS := \uun - \uu$, 
$\bd{H}^1_\pmS := \bd{\chi} - \stress ( \Gs \uun )$, 
$\bd{H}^2_\pmS := \uu \otimes \uu - \uun \otimes \uun$, and
$\bd{H}_\pmS := \bd{H}^1_\pmS + \bd{H}^2_\pmS$. 
From \eqref{eq:wn:momentum:lim:3} and \eqref{pbm:blood:weak:moving-dom:1}, we have for any $\bd{\eta} \in C^\infty_{0, \div} (Q)^3$, and for all $\pmS$ large enough:
\begin{equation*}
- \int_Q  \hh_\pmS \cdot \partial_t \bd{\eta} =  \int_Q  \bd{H}_\pmS : \gd \bd{\eta} .
\end{equation*}
Let $J \subset (0,T)$ be an interval and $B \subset \RR^{\dn}$ be a ball such that $Q_0 := J \times B \Ksub Q$. 
In \cite[Thm. 63, Cor. 64]{nagele_ruzicka2018}, it is shown that if: 
\begin{align*}
\hh_\pmS & \xrightharpoonup[\quad]{}  
\bd{0}
&& \hspace{-1ex} \text{ weakly in } L^{ q } ( J , \Xq (  B  )^3 ) 
,  
&&& \bd{H}^1_\pmS &\xrightharpoonup[\quad]{} \bd{0} 
&& \hspace{-1ex} \text{ weakly in } L^{ q' }  (  Q_0  )^{3 \times 3} , \\
\hh_\pmS & \xrightarrow[\quad]{}  
\bd{0}
&& \hspace{-1ex} \text{ strongly in } L^{ 2 \sigma_0 } ( Q_0 ) ,  
&&& \bd{H}^2_\pmS &\xrightarrow[\quad]{} \bd{0}
&& \hspace{-1ex} \text{ strongly in } L^{  \sigma_0 }  (  Q_0 )^{3 \times 3} , 
\end{align*}
then it holds: 
$
\lim_{n \rightarrow \infty}  \Gs ( \uun ) = \Gs ( \uu  )
$ 
almost everywhere in $\frac{1}{8}Q_0$, so that finally: 
\begin{equation}
\label{eq:Dun:lim.a.e.Q}
\lim_{n \rightarrow \infty}  \Gs ( \uun ) = \Gs ( \uu  )
\quad
\text{ almost everywhere in $Q$. }
\end{equation}
From what has been done, $\hh_\pmS$, $\bd{H}^1_\pmS$, and $\bd{H}^2_\pmS$ satisfy these assumptions, and the convergence \eqref{eq:Dun:lim.a.e.Q} is then obtained. 
The almost everywhere convergence also holds for $\stress ( \Gs \extun )$. 
Thus, the weak and the pointwise limits being the same in $L^r$ for $1<r<\infty$,  
one gets that $\stress ( \Gs \extu ) =  \bd{\chi}$ a.e. in $Q$.

\subsubsection{Density of compactly supported test functions}

Let us define the space    
$ 
\label{eq:def:dens:Smoothtime:Kspace}
\X ( Q ) := 
\lbrace 
\bd{\varphi} \in  C_0^\infty ( [0 , T) , C_{0,\div}^\infty ( D )^3 )  
 \mid
\supp ( \bd{\varphi} ) \Ksub Q
\rbrace  
$.   
We have the following density result (see Appendix \ref{sec:density} for a proof).

\begin{proposition}
\label{thm:density:H1L2capLqVq} 
Let $2 \leq p < \infty$, and $\bd{\eta} \in \espX_p$. 
Then there exists a sequence $(\bd{\eta}_\pmA)_{\pmA \geq 0} \subset \X ( Q )$, such that: 
\begin{align*}
\bd{\eta}_\pmA
\underset{\pmA\rightarrow \infty}{\rightarrow}
\bd{\eta} \text{ strongly in } L^p ( (0,T) ,  \Vp ( \Omega_t )^3 ) , \quad
\partial_t  
\bd{\eta}_\pmA
\underset{\pmA\rightarrow \infty}{\rightharpoonup}
\partial_t \bd{\eta} \text{ weakly in } L^2 ( (0,T) , L^2 ( \Omega_t )^3 )  .
\end{align*}
\end{proposition}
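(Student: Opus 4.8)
The plan is to transport the problem to the fixed cylinder $(0,T)\times\Omega$ via the Piola transform of the flow $\bd{\varphi}$, perform there a cut-off in time and a shrinking of the spatial support, then transport the result back to $Q$ and mollify it in space--time. I would denote by $\mathfrak{P}$ the Piola transform associated with $\bd{\varphi}$, $\mathfrak{P}\vv(t,y):=(\nabla_y\bd{\varphi}(t,y))^{-1}\vv(t,\bd{\varphi}(t,y))$. Since $\vel$ is solenoidal, Liouville's theorem gives $\det\nabla_y\bd{\varphi}(t,y)\equiv1$, and since $\vel\in C^{1,1}$ the map $\bd{\varphi}(t,\cdot)\colon\Omega\to\Omega_t$ is a $C^{1,1}$-diffeomorphism with $C^{1,1}$-inverse depending $C^{1,1}$ on $t$. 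Then $\mathfrak{P}$ is a bicontinuous linear isomorphism of $H^1 L^2(\Omega_t)^3\cap L^p\Vp(\Omega_t)^3$ onto $H^1((0,T),L^2(\Omega)^3)\cap L^p((0,T),\Vp(\Omega)^3)$; it satisfies $\div_y(\mathfrak{P}\vv)=(\div_x\vv)\circ\bd{\varphi}$ (hence preserves the divergence-free constraint), preserves the vanishing trace at $t=T$, respects space--time smoothness and space--time supports, and obeys $\partial_t(\mathfrak{P}\vv)=\mathfrak{P}(\partial_t\vv)+\mathfrak{R}\vv$ with $\mathfrak{R}$ a first-order operator built from $\vel,\nabla\vel$, so that $\mathfrak{P}^{\pm1}$ carry the generalized time derivative boundedly into $L^2 L^2$. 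It therefore suffices to approximate $\widehat{\bd{\eta}}:=\mathfrak{P}\bd{\eta}$ by divergence-free fields on $(0,T)\times\Omega$ with support compact in $[0,T)\times\Omega$ (contained in $[0,T-\varepsilon]\times K$ for some $K\Ksub\Omega$, possibly touching $\{t=0\}$), converging to $\widehat{\bd{\eta}}$ strongly in $L^p\Vp(\Omega)^3$ and with generalized time derivatives converging in $L^2 L^2(\Omega)^3$; then push forward by $\mathfrak{P}^{-1}$ and mollify on $Q$.

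\emph{Cut-off in time.} Here $\widehat{\bd{\eta}}\in H^1((0,T),L^2(\Omega)^3)\hookrightarrow C([0,T],L^2(\Omega)^3)$ with $\widehat{\bd{\eta}}(T,\cdot)=0$, so $\widehat{\bd{\eta}}(t)=-\int_t^T\partial_t\widehat{\bd{\eta}}(s)\dm s$ and the Hardy-type estimate $\lV\widehat{\bd{\eta}}(t)\rV_{L^2(\Omega)}^2\le(T-t)\int_t^T\lV\partial_t\widehat{\bd{\eta}}(s)\rV_{L^2(\Omega)}^2\dm s$ holds. I would pick $\theta_\varepsilon\in C^\infty([0,T])$ with $\theta_\varepsilon\equiv1$ on $[0,T-2\varepsilon]$, $\theta_\varepsilon\equiv0$ on $[T-\varepsilon,T]$, $\lv\theta_\varepsilon'\rv\le C/\varepsilon$, and set $\widehat{\bd{\eta}}_\varepsilon^{(1)}:=\theta_\varepsilon\widehat{\bd{\eta}}$. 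Dominated convergence gives $\widehat{\bd{\eta}}_\varepsilon^{(1)}\to\widehat{\bd{\eta}}$ in $L^p\Vp(\Omega)^3$, while $\partial_t\widehat{\bd{\eta}}_\varepsilon^{(1)}-\partial_t\widehat{\bd{\eta}}=\theta_\varepsilon'\widehat{\bd{\eta}}+(\theta_\varepsilon-1)\partial_t\widehat{\bd{\eta}}\to0$ in $L^2 L^2(\Omega)^3$, since the Hardy estimate yields $\int_{T-2\varepsilon}^{T-\varepsilon}\lv\theta_\varepsilon'\rv^2\lV\widehat{\bd{\eta}}(t)\rV_{L^2}^2\dm t\le C\int_{T-2\varepsilon}^T\lV\partial_t\widehat{\bd{\eta}}\rV_{L^2}^2\dm t\to0$. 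This step is precisely where the constraint $\bd{\eta}(T,\cdot)=0$ enters.

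\emph{Interior spatial support, return to $Q$, mollification.} Since $\Vp(\Omega)^3$ is by definition the $W^{1,p}$-closure of $C^\infty_{0,\div}(\Omega)^3$, the classical construction realizing this density (dilation of the pieces of a partition of unity towards interior points, then a Bogovskii correction of the residual divergence on a fixed interior subdomain) yields $t$-independent bounded linear operators $\mathcal{S}_\lambda$ ($\lambda\in(0,1)$) on $W^{1,p}(\Omega)^3$ and on $L^2(\Omega)^3$, preserving the divergence-free constraint, mapping $\Vp(\Omega)^3$ into divergence-free $W^{1,p}$-fields supported in a $\lambda$-dependent compact subset of $\Omega$, and converging strongly to the identity as $\lambda\to1$. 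I would set $\widehat{\bd{\eta}}_{\varepsilon,\lambda}^{(2)}(t):=\mathcal{S}_\lambda\widehat{\bd{\eta}}_\varepsilon^{(1)}(t)$; then this field is divergence-free, has support compact in $[0,T-\varepsilon]\times\Omega$, and $\partial_t\widehat{\bd{\eta}}_{\varepsilon,\lambda}^{(2)}=\mathcal{S}_\lambda\partial_t\widehat{\bd{\eta}}_\varepsilon^{(1)}$ by $t$-independence, whence $\widehat{\bd{\eta}}_{\varepsilon,\lambda}^{(2)}\to\widehat{\bd{\eta}}_\varepsilon^{(1)}$ in $L^p\Vp(\Omega)^3$ and $\partial_t\widehat{\bd{\eta}}_{\varepsilon,\lambda}^{(2)}\to\partial_t\widehat{\bd{\eta}}_\varepsilon^{(1)}$ in $L^2 L^2(\Omega)^3$ as $\lambda\to1$ (dominated convergence). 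Pushing forward, $\bd{\eta}_{\varepsilon,\lambda}^{(2)}:=\mathfrak{P}^{-1}\widehat{\bd{\eta}}_{\varepsilon,\lambda}^{(2)}$ is divergence-free on $Q$, lies in $H^1 L^2(\Omega_t)^3\cap L^p\Vp(\Omega_t)^3$, and its support is a compact subset of $\underline{Q}$ at positive distance from $\Sigma$ and from $\{t\ge T-\varepsilon\}$ (in particular inside $[0,T)\times\had$). Finally, I would fix a mollifier $\rho_\delta(t,x)=\delta^{-4}\rho(t/\delta,x/\delta)$ with $\supp\rho\subset(-1,0)\times B_1(0)$ — the one-sided support in time ensuring the convolution at $t\ge0$ uses only times in $(t,t+\delta)\subset(0,T)$, hence creates no boundary term at $\{t=0\}$ or $\{t=T\}$ — and set $\bd{\eta}_{\varepsilon,\lambda,\delta}^{(3)}:=\rho_\delta\conv\bd{\eta}_{\varepsilon,\lambda}^{(2)}$. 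For $\delta$ small relative to the distance of $\supp\bd{\eta}_{\varepsilon,\lambda}^{(2)}$ to $\Sigma$ and to $\{t=T\}$, this is smooth in $(t,x)$, supported compactly in $Q$, and $\div_x\bd{\eta}_{\varepsilon,\lambda,\delta}^{(3)}=\rho_\delta\conv\div_x\bd{\eta}_{\varepsilon,\lambda}^{(2)}=0$, so $\bd{\eta}_{\varepsilon,\lambda,\delta}^{(3)}\in\X(Q)$; since near the support the generalized time derivative coincides with the distributional one, $\partial_t\bd{\eta}_{\varepsilon,\lambda,\delta}^{(3)}=\rho_\delta\conv\partial_t\bd{\eta}_{\varepsilon,\lambda}^{(2)}$, and classical mollification estimates give $\bd{\eta}_{\varepsilon,\lambda,\delta}^{(3)}\to\bd{\eta}_{\varepsilon,\lambda}^{(2)}$ in $L^p\Vp(\Omega_t)^3$ and $\partial_t\bd{\eta}_{\varepsilon,\lambda,\delta}^{(3)}\to\partial_t\bd{\eta}_{\varepsilon,\lambda}^{(2)}$ in $L^2 L^2(\Omega_t)^3$ as $\delta\to0$. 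A diagonal extraction over $\varepsilon\to0$, $\lambda\to1$, $\delta\to0$ then furnishes $(\bd{\eta}_\pmA)_{\pmA\ge0}\subset\X(Q)$ with the stated convergences (in fact with the time derivatives converging strongly in $L^2 L^2(\Omega_t)^3$).

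\emph{Main obstacle.} The hard part is that the three requirements on the approximants — divergence-free, compact spatial support (equivalently, vanishing lateral trace pushed strictly inside), and control of the generalized time derivative — cannot be met directly on a genuinely moving domain, since neither a spatial mollification nor a naive rescaling keeps $\div_x=0$ near the moving boundary $\Sigma$. This is what forces the detour through the Piola transform, where two points need care: (i) $\mathfrak{P}$ must be compatible with the generalized time derivative, i.e.\ the correction $\mathfrak{R}$ must map $L^2 L^2$ into $L^2 L^2$, which holds precisely because $\vel\in C^{1,1}$ (so $\partial_t\nabla\bd{\varphi}$ is bounded); and (ii) $\mathfrak{P}$ must preserve $\div_x=0$, which holds because $\div\vel=0$, i.e.\ $\det\nabla\bd{\varphi}\equiv1$. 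Carrying out the spatial interiorization with \emph{$t$-independent} operators (rather than a $t$-dependent family) is what makes the commutation with $\partial_t$ — and thus the time-derivative convergence — go through cleanly.
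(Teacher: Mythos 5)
Your proposal is correct and follows essentially the same route as the paper's proof: transport to the fixed domain by the Piola transform (divergence-preserving since $\det\nabla\bd{\varphi}\equiv 1$, compatible with the generalized time derivative by the $C^{1,1}$ bound on $\vel$), cut off in time using the vanishing at $t=T$ via a Hardy-type estimate, interiorize the spatial support with a cut-off corrected by the Bogovski\u{i} operator, mollify, and pull back. The only differences are cosmetic — you mollify one-sidedly in time on $Q$ at the end rather than reflecting at $t=0$ and mollifying on the fixed cylinder before the Bogovski\u{i} correction, and you package the interiorization as abstract $t$-independent operators $\mathcal{S}_\lambda$ instead of the explicit cut-offs $\xi_\pmA$ — but the substance is the same.
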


In view of the above computations, we have obtained in particular that:
\begin{multline*} 
\label{eq:wn:momentum:lim:4}
- \int_0^T ( \uu  , \partial_t \bd{\eta} )_{ L^2 (\Omega_t) } 
+ \int_0^T \langle \stress ( \Gs \uu )  , \bd{\eta} \rangle_{W^{1,p}(\Omega_t)}
+ \int_0^T ( \Bg  ( \uu )  , \bd{\eta} \rangle_{W^{1,p}(\Omega_t)}  \\
=
\int_0^T ( \bd{f} , \bd{\eta} \rangle_{W^{1,p}(\Omega_t)}
+ ( \uu_{0} , \bd{\eta}(0) )_{ L^2 (\Omega)} , 
\quad 
\forall  \bd{\eta}   \in   \X ( Q ) .
\end{multline*}
Finally, from Proposition \ref{thm:density:H1L2capLqVq}, 
we can conclude that $\uu$ is a weak solution to Problem~\eqref{pbm:blood_flow:hetero-Dirichlet:moving-dom}-\eqref{eq:def:stress:2} written on $Q$. Then Theorem \ref{thm:shapo-cont:blood} is proved.

\section{Existence of a minimal shape}
\label{sec:exist-min}

We apply the direct method of calculus of variations.

\begin{proof}
In view of assumption \ref{item:assump:j:1}, we have that $\SHPfun ( \Omega , \vel  )$ is well-defined and non-negative for all $( \Omega , \vel  ) \in \Opn \times \V$. 
Because $\Opn \times \V$ is not empty, we thus have that 
$
\SOinf := \inf_{( \Omega , \vel  ) \in \Opn \times \V} \lbrace \SHPfun ( \Omega , \vel  ) \rbrace 
$   
exists.  
Let $(( \Omega^\pmS , \vel^\pmS ))_{\pmS\geq 0} \subset \Opn \times \V$ be a minimizing sequence  for $\SHPfun$.  
From Proposition \ref{thm:compactness:geom}, we can consider a not relabeled subsequence $(( \Omega^\pmS , \vel^\pmS ))_{\pmS\geq 0}$ converging to some $(\Omega^* , \vel^*) \in \Opn \times \V$ in the sense of Definition \ref{def:cvg:dom-veloc}. 
Let $\pmE \geq 1$.
From Theorem~\ref{thm:shapo-cont:blood}, and from the definition \eqref{eq:def:SHPfun} of $\SHPfun$, we have that for all $\pmS \in \NN$, there exists $\uun_\pmE \in \nP ( \Omega^\pmS , \vel^\pmS )$, such that: 
\begin{equation}
\label{eq:pf:existenceSO:1}
\SHPfun ( \Omega^\pmS , \vel^\pmS )
\leq 
\shpFUN ( \Omega^\pmS , \vel^\pmS ; \uun_\pmE  ) 
<  
\SHPfun ( \Omega^\pmS , \vel^\pmS ) + \frac{1}{\pmE} .
\end{equation}
Still from Theorem~\ref{thm:shapo-cont:blood}, let $( \uun_\pmE )_{\pmS\geq 0}$  be a not relabeled subsequence converging to some $\uu_\pmE  \in \nP (\Omega^* , \vel^*)$. 
Letting $\pmS$ go to $\infty$ in \eqref{eq:pf:existenceSO:1}, recalling that $( \Omega^\pmS , \vel^\pmS )$ is a minimizing sequence, and taking into account the lower semicontinuity property \ref{item:assump:j:2} of $\shpFUN$, we can write:
\begin{equation*}
\SOinf
 \leq
\SHPfun ( \Omega^* , \vel^* )
\leq
\shpFUN ( \Omega^* , \vel^* ; \uu_\pmE  ) 
 \leq 
 \liminf_{\pmS\rightarrow\infty}
\shpFUN ( \Omega^\pmS , \vel^\pmS ; \uun_\pmE  )  
\leq 
\SOinf  + \frac{1}{\pmE} .
\end{equation*} 
Letting $\pmE \rightarrow \infty$, we obtain  
$\SHPfun ( \Omega^* , \vel^* ) = \SOinf$, 
so that $( \Omega^* , \vel^* ) $ is a solution to Problem \eqref{eq:pbm:minimization:abstract}.
\end{proof}

\section{Conclusion}
\label{sec:ccl}

We have shown the existence of minimal shapes for blood flows in  
moving domains, 
by proving the shape continuity of the velocity solutions.  
Blood is described by generalized Navier-Stokes equations (power law $q > 6/5$ modeling shear-thinning fluids).
This model allows for considering  prescribed movements of the solid domain surrounding and inside the blood (ex: blood pump modeling), 
and to tackle the hemolysis index minimization problem (see Example~\ref{thm:ex:hemolysis}).    

 The blood velocity is here assumed to fit the one of the domain on its boundary.  
 To go further, one could consider more general BCs  
(ex: prescribed velocity on the inlet; Navier type BCs,  
see \cite{bsaies_dziri2020sh-sens_navier-bc, boukrouche_et-al2022non-newt_friction}); 
artificial BCs  
on the outlet).
It could also be interesting to take into account the elasticity of blood vessels,   carrying out the same kind of analysis based on the work of \cite{lengeler2014gnl-newt_koiter}.

Although our analysis has led to an existence result, the non-uniqueness of the solutions prevents  from studying the shape differentiability of the problem. 
This is an impediment to any numerical study based on shape gradient method. 
In order to circumvent this lack of uniqueness, to compute shape derivatives, and to tackle numerically the blood damage minimization, one could work with strong solutions (unique locally in time), or with 2D Newtonian fluids. 
Otherwise, regardless of the model chosen, the analysis of the convergence of a numerical scheme would then be a fundamental issue.

Finally, concerning the hemolysis index in the framework of numerical computation, a more practical way of modeling would be to consider transport equation (see, e.g. \cite{nam2011stlsfem, yu2017review}). 
Thus, as a first step, it would be interesting to consider the same shape continuity result for transport equations, 
and then to study the shape sensitivity of transport equations.

\appendix

\section{Density results}
\label{sec:density}

\subsection{Preliminary results}

Let $\Omega \subset \RR^{\dn}$ be a bounded open set, 
having the $c_{\Opn}$-cone property.  
We recall that $\Omega$ has then a Lipschitz boundary, and its Lipschitz character $\Lip (\Omega)$ depends only on $c_{\Opn}$ (see \cite[Rem. 2.4.8]{henrot_pierre2018shapo_geom}). 
Let $f \in C^\infty_0 ( \Omega  )$, such that  
$ 
\label{eq:div-pbm:compatib-cond}
\int_{\Omega } f (x) \dm x = 0
$.    
There exists a linear map $\bogov$, called \textit{Bogovski\u{i}'s operator}, acting on such functions $f$ and taking its values in $C^\infty_0 ( \Omega  )$ such that:
\begin{equation*}
\label{eq:div-pbm}
 \div \bogov [f] = f \text{ on } \Omega .
\end{equation*}
Let us compile some further needed properties of the Bogovski\u{i}'s operator that can be found in \cite[Sec.  10.5]{feireisl_novotny2017singular_book}. 
For recent progress on the topic in moving domains, see \cite{saari2023construction}.  
\begin{proposition}
\label{thm:bogov}
Let $u \in L^p(\Omega)$ such that $\int_{\Omega} u (x) \dm x = 0$, 
$\uu \in W_0^{1,p} (\Omega)^3$, 
and $v \in W^{1,p} ( (0,T) , L^p (\Omega ) )$ such that $\int_{\Omega} v (t,\cdot) \dm x = 0$ for a.e. $t \in (0,T)$, 
for $1 < p < \infty$. 
Then: 
\begin{align}
\label{eq:estim:bogov:div-1}
\lV \bogov [ u ] \rV_{W^{1,p}} 
\leq c_{\bogov} (\Omega)
\lV u  \rV_{L^p}  ,  \qquad 
\lV \bogov [ \div \uu ] \rV_{L^p} 
\leq c_{\bogov} (\Omega)
\lV \uu  \rV_{L^p}  ,
\end{align}   
and for a.e.$(t,x) \in (0,T) \times \Omega$:
\begin{equation}
\label{eq:bogov:commut:dt}
\partial_t \bogov [ v ] (t,x) 
= \bogov [ \partial_t v ] (t,x) .
\end{equation}
\end{proposition}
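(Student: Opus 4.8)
The statement to prove is Proposition~\ref{thm:bogov}, collecting standard continuity and commutation properties of the Bogovski\u{i} operator. Here is how I would organize a proof.

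\medskip

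\textbf{Plan of proof.}
The plan is to treat the three assertions separately, invoking the classical construction of the Bogovski\u{i} operator on bounded Lipschitz (indeed John) domains, and then upgrading to the time-dependent statement by a duality/density argument. First I would recall that on a bounded domain $\Omega$ with the $c_{\Opn}$-cone property, hence with Lipschitz boundary whose Lipschitz character is controlled by $c_{\Opn}$, the operator $\bogov$ extends from $C_0^\infty(\Omega)$ (with zero mean) to a bounded linear operator
\[
\bogov : \{ u \in L^p(\Omega) : \textstyle\int_\Omega u = 0 \} \longrightarrow W_0^{1,p}(\Omega)^3,
\]
for every $1<p<\infty$, with operator norm depending only on $p$ and on $\Lip(\Omega)$, hence only on $p$ and $c_{\Opn}$. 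This is exactly \cite[Sec.~10.5]{feireisl_novotny2017singular_book}, and it gives the first inequality in \eqref{eq:estim:bogov:div-1}. For the second inequality, I would use the dual formulation: for $\uu \in W_0^{1,p}(\Omega)^3$ one has $\div\uu \in L^p(\Omega)$ with zero mean (integrate by parts against the constant), and the estimate $\lV \bogov[\div\uu]\rV_{L^p} \le c_{\bogov}(\Omega)\lV\uu\rV_{L^p}$ follows from the representation of $\bogov$ as a singular integral of convolution type: schematically $\bogov[\div\uu]$ is, modulo lower-order terms controlled by Hardy--Littlewood, a Calder\'on--Zygmund operator applied to $\uu$ itself, so it is $L^p\to L^p$ bounded. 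Alternatively, and more cleanly, one argues by duality with the first estimate applied to the adjoint problem.

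\medskip

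\textbf{The time-dependent commutation \eqref{eq:bogov:commut:dt}.}
For the last assertion, let $v \in W^{1,p}((0,T),L^p(\Omega))$ with $\int_\Omega v(t,\cdot)=0$ for a.e.\ $t$. The key point is that $\bogov$ acts only in the space variable and is a bounded linear operator on the (closed) subspace of $L^p(\Omega)$ of mean-zero functions; and by the second estimate in \eqref{eq:estim:bogov:div-1} (or directly the first), it is also bounded $L^p \to L^p$. A bounded linear operator $A$ on a Banach space commutes with the distributional time derivative of any Bochner-integrable map $t \mapsto v(t)$: for every $\phi \in C_0^\infty((0,T))$,
\[
\int_0^T \bogov[v(t)]\,\phi'(t)\dm t
= \bogov\Big[\int_0^T v(t)\,\phi'(t)\dm t\Big]
= -\bogov\Big[\int_0^T \partial_t v(t)\,\phi(t)\dm t\Big]
= -\int_0^T \bogov[\partial_t v(t)]\,\phi(t)\dm t,
\]
where the first and last equalities use that $\bogov$ commutes with Bochner integrals (continuity of $\bogov$), and the middle one is the definition of $\partial_t v$. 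This shows $\partial_t(\bogov[v]) = \bogov[\partial_t v]$ as $L^p$-valued distributions, hence for a.e.\ $(t,x)$, which is \eqref{eq:bogov:commut:dt}. One should also note in passing that $v(t,\cdot)$ having zero mean for a.e.\ $t$ forces $\partial_t v(t,\cdot)$ to have zero mean for a.e.\ $t$ as well (differentiate $\int_\Omega v(t,\cdot)=0$ in $t$), so $\bogov[\partial_t v(t)]$ is meaningful.

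\medskip

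\textbf{Main obstacle.}
The genuinely nontrivial analytic input is the second estimate in \eqref{eq:estim:bogov:div-1}, i.e.\ the $L^p$-boundedness of $\uu \mapsto \bogov[\div\uu]$: naively one only gets $W^{1,p}$-control from $W^{1,p}$-control, whereas here one must gain nothing in regularity but lose nothing either, passing from $\lV\uu\rV_{L^p}$ to $\lV\bogov[\div\uu]\rV_{L^p}$. This is precisely the point where the Calder\'on--Zygmund structure of the Bogovski\u{i} kernel (for star-shaped-with-respect-to-a-ball pieces, then patched over a finite cover since $\Omega$ is Lipschitz, hence a finite union of such pieces) is essential; the uniformity of the constant in $c_{\Opn}$ then comes from the fact that the cone property controls both the number of patches in the cover and the geometry of each patch. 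All of this is contained in \cite[Sec.~10.5]{feireisl_novotny2017singular_book} (see also \cite{saari2023construction} for the moving-domain refinements), so in the write-up I would simply cite these and verify that the constants depend only on the data claimed in the statement.
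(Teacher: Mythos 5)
Your proposal is correct and takes essentially the same route as the paper, which does not prove this proposition at all but simply compiles it from \cite[Sec.~10.5]{feireisl_novotny2017singular_book}: you defer the two Bogovski\u{i} estimates (including the genuinely nontrivial negative-norm bound $\lV \bogov[\div\uu]\rV_{L^p}\leq c_{\bogov}\lV\uu\rV_{L^p}$) to the same reference and correctly identify where the constant's dependence on $\Lip(\Omega)$, hence on $c_{\Opn}$, comes from. Your explicit verification of the commutation \eqref{eq:bogov:commut:dt} via boundedness of $\bogov$, Hille's theorem for Bochner integrals, and the observation that $\partial_t v(t,\cdot)$ inherits the zero-mean condition is correct and is a detail the paper leaves entirely implicit.
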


Let $1< p < \infty$, 
and $c_{\bogov} (\Omega)$ be the so-called \emph{Bogovski\u{i}'s constant} that appears in Proposition~\ref{thm:bogov}. 
Because $\partial \Omega$ is Lipschitz, there exists a so-called \emph{Korn's constant} denoted by $c_{ \korn , p } ( \Omega )$ such that  
for any vector field $\vv \in H^{1} (\Omega)^3$:
\begin{equation*}
\lV \gd \vv \rV_{L^{p} (\Omega)}
\leq c_{ \korn , p } ( \Omega  )
\lV \Gs \vv \rV_{L^{p} (\Omega)}.
\end{equation*}
It is known that $c_{\bogov} (\Omega)$ depends only on $p$, the dimension, and $\Lip (\Omega)$, and from there, it is shown in \cite{bucur_feireisl_necasova2008influ} that $c_{ \korn , p } ( \Omega  )$ also depends only on $p$, dimension, and $\Lip (\Omega)$. 
Thus, by definition \eqref{eq:def:class:O}, for $1< p < \infty$, there exist two constants $c_{\bogov} > 0$ and $c_{ \korn } > 0$ such that for any $\Omega \in \Opn$: 
\begin{equation}
c_{\bogov} (\Omega) \leq c_{\bogov} ,
\quad \text{ and } \quad
c_{ \korn , p } ( \Omega  ) \leq c_{ \korn } .
\label{eq:unif:Bogv-and-Korn}
\end{equation}
Furthermore, 
let $\vel \in \V$ and  
$\bd{\varphi}$ be given by \eqref{eq:def:transfo-assoc-velocity}. 
For all $t\in [0,T]$, we have  
$
\Lip (\bd{\varphi}(t,\cdot) , \had) \leq \exp ( c_{\V} T )
$, where: 
\begin{equation*}
\Lip (\bd{\varphi}(t,\cdot) , \had) := \sup 
\left\lbrace
\frac{\lv \bd{\varphi}(t, x ) - \bd{\varphi}(t, y ) \rv}{ \lv x - y \rv } \;\Big\vert\; x,y \in \had, \ x\neq y 
\right\rbrace .
\end{equation*}
We also have that $\bd{\varphi}(t,\cdot)$ is invertible, and its inverse $\bd{\varphi}^{-1}(t,\cdot) $ satisfies  
$
\Lip (\bd{\varphi}^{-1}(t,\cdot) , \had) \leq \exp ( c_{\V} T )
$. 
Thus, from \cite[Thm. 4.1]{hofmann_mitrea_taylor2007geom-dom}, we have that for all $t\in [0,T]$,  
$\Lip (\Omega_t)$, is controlled in terms of $\Lip (\bd{\varphi}(t,\cdot) , \had)$, 
$\Lip (\bd{\varphi}^{-1}(t,\cdot) , \had)$, and
and $\Lip (\Omega)$.  
Then, there exists also two constants still denoted by $c_{\bogov} > 0$ and $c_{ \korn } > 0$ depending on $c_{\Opn}$ and $c_{\V}$ such that for any $(\Omega , \vel )   \subset  \Opn \times \V$, and for all $t\in [0,T]$: 
\begin{equation}
c_{\bogov} (\Omega_t) \leq c_{\bogov} ,
\quad \text{ and } \quad
c_{ \korn , p } ( \Omega_t  ) \leq c_{ \korn } .
\label{eq:unif:Bogv-and-Korn_t}
\end{equation}

\smallskip

We will need the following Lemma, 
which summarizes some of the results that can be found in \cite{nagele2017} (see Lemma 3.7, equation (3.24), and Proposition 3.48).

\begin{lemma}
\label{thm:lem:pull-back:piola}
Let $\bd{\varphi}$ defined by \eqref{eq:def:transfo-assoc-velocity} be a smooth diffeomorphism from $\Omega$ to $\Omega_t$. 
We define  
the \emph{Piola transform} $\mathcal{P}_{\bd{\varphi}}$ as follows. For any $\uu \in L^1 ( Q )^3$, and for a.e. $(t,x) \in [0,T]\times \Omega$:
\begin{align*}
(\mathcal{P}_{\bd{\varphi}} \uu )(t,x)  := \det ( \nabla \bd{\varphi} ( t , x ) ) (\nabla \bd{\varphi} ( t , x ))^{-1}  \uu ( t , \varphi ( t , x ) ).
\end{align*}
Defined this way, 
the Piola transform extends to an  isomorphism: 
\begin{equation} 
\mathcal{P}_{\bd{\varphi}} : L^p ( (0,T), B(t)  ) \longrightarrow L^p ( (0,T), B(0)  ),
\end{equation} 
where $B(t)$ denotes either $L^p (\Omega_t)^3$ 
or $W^{1,p}_{0,\div}  (\Omega_t)^3$, recalling that $\Omega_0 = \Omega$, 
and where the inverse is nothing else than $\mathcal{P}_{\bd{\varphi^{-1}}}$. 
Furthermore, let $\bd{\eta}\in H^1 ( (0,T) , L^2 ( \Omega_t )^3 ) \cap L^p ( (0,T) , \Vp ( \Omega_t )^3  )$ for some $p \geq 2$.  
Then we have:
\begin{equation*}
 \mathcal{P}_{\bd{\varphi}} \bd{\eta} \in H^1 ( (0,T) , L^2 ( \Omega )^3 ) \cap L^p ( (0,T) , \Vp ( \Omega )^3 ),
\end{equation*}
and there exists a constant $C>0$ such that:
\begin{equation*}
\lV \partial_t ( \mathcal{P}_{\bd{\varphi}} \bd{\eta} ) \rV_{L^2 L^2( \Omega )}
\leq C
\left( 
  \lV \bd{\eta}  \rV_{H^1 L^2( \Omega_t )}
+ \lV  \gd_x \bd{\eta} \rV_{L^2 L^2( \Omega_t )}
\right) .
\end{equation*}
\end{lemma}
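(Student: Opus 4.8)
The plan is to establish the two assertions of the lemma in turn, starting with the isomorphism property of $\mathcal{P}_{\bd{\varphi}}$ on the generalized Bochner spaces. First I would record the algebraic Piola identity $\div_x(\mathcal{P}_{\bd{\varphi}}\uu)(x)=\det(\gd\bd{\varphi}(x))\,(\div\uu)(\bd{\varphi}(x))$, which simplifies here (since $\vel$ is solenoidal, $\det\gd\bd{\varphi}\equiv1$) and shows that $\mathcal{P}_{\bd{\varphi}}$, acting fibrewise in $t$, sends divergence-free fields to divergence-free fields; since it acts by multiplication by the pointwise invertible matrix $\det(\gd\bd{\varphi})(\gd\bd{\varphi})^{-1}$ and $\bd{\varphi}(t,\cdot)$ carries $\partial\Omega$ onto $\partial\Omega_t$, it also preserves vanishing traces, so $\mathcal{P}_{\bd{\varphi}}(t)$ maps $W^{1,p}_{0,\div}(\Omega_t)^3$ into $W^{1,p}_{0,\div}(\Omega)^3$ and $L^p(\Omega_t)^3$ into $L^p(\Omega)^3$. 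Next I would note that $\gd\bd{\varphi}$, $(\gd\bd{\varphi})^{-1}$, the matrix $\det(\gd\bd{\varphi})(\gd\bd{\varphi})^{-1}$ and its spatial derivative are bounded in $L^\infty([0,T]\times\overline{\Omega})$ by constants depending only on $c_\V$ and $T$ — obtained by differentiating the flow equation \eqref{eq:def:transfo-assoc-velocity} in $x$ and applying Gronwall's lemma, exactly as for the bound $\Lip(\bd{\varphi}(t,\cdot),\had)\le e^{c_\V T}$ used in the appendix. A change of variables $y=\bd{\varphi}(t,x)$ together with the chain rule then yields, for a.e. $t$, the uniform two-sided bounds $\lV\mathcal{P}_{\bd{\varphi}}(t)\uu\rV_{W^{1,p}(\Omega)}\simeq\lV\uu\rV_{W^{1,p}(\Omega_t)}$ and $\lV\mathcal{P}_{\bd{\varphi}}(t)\uu\rV_{L^p(\Omega)}\simeq\lV\uu\rV_{L^p(\Omega_t)}$; the same reasoning applied to $\bd{\varphi}^{-1}$ shows $\mathcal{P}_{\bd{\varphi^{-1}}}$ is a two-sided inverse. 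Integrating the fibrewise bounds over $(0,T)$ gives the isomorphism $L^p((0,T),B(t))\to L^p((0,T),B(0))$; applied to $\bd{\eta}$ this already gives $\mathcal{P}_{\bd{\varphi}}\bd{\eta}\in L^p((0,T),\Vp(\Omega)^3)$, and, since $p\ge2$ on the bounded domain $\Omega$, also $\mathcal{P}_{\bd{\varphi}}\bd{\eta}\in L^2((0,T),L^2(\Omega)^3)$.

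For the time derivative I would first treat smooth $\bd{\eta}$: writing $(\mathcal{P}_{\bd{\varphi}}\bd{\eta})(t,x)=\bd{G}(t,x)\,\bd{\eta}(t,\bd{\varphi}(t,x))$ with $\bd{G}:=\det(\gd\bd{\varphi})(\gd\bd{\varphi})^{-1}$ and differentiating by the product and chain rules, using $\partial_t\bd{\varphi}(t,x)=\vel(t,\bd{\varphi}(t,x))$,
\begin{multline*}
\partial_t(\mathcal{P}_{\bd{\varphi}}\bd{\eta})(t,x)=(\partial_t\bd{G})(t,x)\,\bd{\eta}(t,\bd{\varphi}(t,x))\\
+\bd{G}(t,x)\Big[(\partial_t\bd{\eta})(t,\bd{\varphi}(t,x))+(\gd_y\bd{\eta})(t,\bd{\varphi}(t,x))\,\vel(t,\bd{\varphi}(t,x))\Big].
\end{multline*}
Here $\bd{G}$, $\partial_t\bd{G}$ (which only involves $\gd\vel$ and $\gd\bd{\varphi}$, through $\gd_x(\vel\circ\bd{\varphi})$) and $\vel\circ\bd{\varphi}$ lie in $L^\infty([0,T]\times\overline{\Omega})$ with norms controlled by $c_\V$ and $T$. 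Taking the $L^2(\Omega)$-norm in $x$ and changing variables $y=\bd{\varphi}(t,x)$ (Jacobian bounded above and below), the three terms are controlled respectively by $C\lV\bd{\eta}(t)\rV_{L^2(\Omega_t)}$, $C\lV\partial_t\bd{\eta}(t)\rV_{L^2(\Omega_t)}$ and $C\lV\gd_x\bd{\eta}(t)\rV_{L^2(\Omega_t)}$; squaring, integrating over $(0,T)$ and taking square roots gives
\begin{equation*}
\lV\partial_t(\mathcal{P}_{\bd{\varphi}}\bd{\eta})\rV_{L^2L^2(\Omega)}\le C\big(\lV\bd{\eta}\rV_{H^1L^2(\Omega_t)}+\lV\gd_x\bd{\eta}\rV_{L^2L^2(\Omega_t)}\big),
\end{equation*}
the right-hand side being finite since $\bd{\eta}\in L^pW^{1,p}(\Omega_t)$ with $p\ge2$ forces $\gd_x\bd{\eta}\in L^2L^2(\Omega_t)$ on the bounded domains $\Omega_t$.

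To remove the smoothness assumption I would approximate a general $\bd{\eta}\in H^1((0,T),L^2(\Omega_t)^3)\cap L^p((0,T),\Vp(\Omega_t)^3)$ by smooth fields: pull back to the fixed cylinder $(0,T)\times\Omega$ by $\mathcal{P}_{\bd{\varphi^{-1}}}$, mollify in space and time while restoring the divergence-free and boundary conditions via a Bogovski\u{i} correction (Proposition~\ref{thm:bogov}), and push forward again. By the isomorphism and the estimate applied to differences, the images under $\mathcal{P}_{\bd{\varphi}}$ of these approximations are Cauchy in $H^1((0,T),L^2(\Omega)^3)\cap L^p((0,T),\Vp(\Omega)^3)$; the limit coincides with $\mathcal{P}_{\bd{\varphi}}\bd{\eta}$ by the $L^p$-convergence, so $\mathcal{P}_{\bd{\varphi}}\bd{\eta}$ belongs to that space, its distributional time derivative is given by the formula above, and the estimate passes to the limit. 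I expect the main obstacle to be precisely this last point — justifying that the distributional $\partial_t$ of the composition $\bd{\eta}(t,\bd{\varphi}(t,x))$ really is the formal chain-rule expression when $\bd{\eta}$ is only $H^1$ in time and $W^{1,p}$ in space — whereas the uniform $L^\infty$-control of $\gd\bd{\varphi}$, $(\gd\bd{\varphi})^{-1}$ and their time derivatives in terms of $c_\V$ and $T$ is routine bookkeeping.
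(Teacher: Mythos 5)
The paper does not actually prove this lemma: it is imported wholesale from N\"agele's thesis \cite{nagele2017} (Lemma 3.7, equation (3.24), Proposition 3.48), so there is no internal proof to compare against. Your computational content is correct and is essentially what the reference does: $\det\gd\bd{\varphi}\equiv 1$ because $\vel$ is solenoidal, the Piola identity preserves the divergence constraint and the diffeomorphism preserves vanishing traces, the $L^\infty$ bounds on $\gd\bd{\varphi}$, $(\gd\bd{\varphi})^{-1}$ and their space and time derivatives follow from Gronwall applied to the linearized flow equation, the change of variables gives the fibrewise norm equivalences, and the product/chain rule produces exactly the three terms that yield the stated estimate. You also correctly identify where the real work lies.

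The gap is in your final approximation step, which as written is circular. To mollify $\mathcal{P}_{\bd{\varphi}} \bd{\eta}$ in time on the fixed cylinder and conclude that the mollifications converge in $H^1 ( (0,T) , L^2 ( \Omega )^3 )$, you must already know that $\mathcal{P}_{\bd{\varphi}} \bd{\eta}$ lies in $H^1 ( (0,T) , L^2 ( \Omega )^3 )$ --- which is precisely the claim; and pushing the smoothed fields forward and asserting that they converge to $\bd{\eta}$ in $H^1 L^2 ( \Omega_t )$ invokes the same lemma for the inverse flow. There is a second circularity lurking: the only density-of-smooth-functions statement available in this paper, Proposition~\ref{thm:density:H1L2capLqVq}, is itself proved in the appendix \emph{by means of} the present lemma, so it cannot be used here. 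A non-circular route is to verify the distributional identity directly: for $\bd{\phi} \in C^\infty_0 ( (0,T) \times \Omega )^3$, change variables $y = \bd{\varphi}(t,x)$ in $\int \mathcal{P}_{\bd{\varphi}} \bd{\eta} \cdot \partial_t \bd{\phi}$, rewrite $\partial_t \bd{\phi} ( t , \bd{\varphi}^{-1}(t,y) )$ as $\partial_t [ \bd{\phi} \circ \bd{\varphi}^{-1} ]$ minus the spatial correction involving $\vel$, and test the generalized time derivative of $\bd{\eta}$ (which exists in $L^2 L^2 ( \Omega_t )$ by hypothesis) against the smooth, slice-wise compactly supported function so obtained; every resulting term is one of the three in your formula and is integrable by the bounds you established, since $\gd_y \bd{\eta} \in L^p L^p ( \Omega_t ) \subset L^2 L^2 ( \Omega_t )$ for $p \geq 2$ on bounded domains. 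Finally, note a small slip of orientation: as defined, $\mathcal{P}_{\bd{\varphi}}$ already sends functions on the moving domain to functions on the fixed cylinder, so ``pulling back to the fixed cylinder'' is effected by $\mathcal{P}_{\bd{\varphi}}$, not by $\mathcal{P}_{\bd{\varphi}^{-1}}$.
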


From the above results, 
we can prove the density result.

\subsection{Proof of Proposition \ref{thm:density:H1L2capLqVq}}

Let $\bd{\eta} \in \espX_p$.

\noindent$\bullet$ \emph{Construction of smooth approximations.}
We first consider its Piola transform $ \mathcal{P}_{\bd{\varphi}} \bd{\eta}$ which, in view of definition \eqref{eq:def:space:XqD} and Lemma~\ref{thm:lem:pull-back:piola}, belongs to 
$ 
H^1 ( (0,T) , L^2 (\Omega)^3 ) \cap L^p ( (0,T) , W^{1,p}_0 ( \Omega )^3 )
$  
and is such that 
$  
\mathcal{P}_{\bd{\varphi}} \bd{\eta} ( T, \cdot ) = \bd{0}
$.  
Afterwards, we extend $\mathcal{P}_{\bd{\varphi}} \bd{\eta} $ in space to $ L^p ( (0,T) , \Vp (\had)^3 )$ by $\bd{0}$, then we extend it in time to $L^p ( (0,\infty) , \Vp (\had)^3 )$ by $\bd{0}$, and then we extend it in time to $L^p ( \RR , \Vp (\had)^3 )$ by reflection at $t=0$, denoting by $\overline{\mathcal{P}_{\bd{\varphi}} \bd{\eta} }$ the finally extended field. 
We have: 
\begin{equation}
\label{eq:densite:proof:-1}
\overline{\mathcal{P}_{\bd{\varphi}} \bd{\eta} } \in
H^1 ( \RR , L^2 (\had)^3 ) \cap L^p ( \RR , W^{1,p}_0 ( \had )^3 ) , 
\end{equation}
with $\supp (\overline{\mathcal{P}_{\bd{\varphi}} \bd{\eta} })  \subset [-T,T]\times \overline{\Omega}$.
Now we set:
\begin{equation}
\label{eq:densite:proof:3}
\widetilde{\bd{\eta}}_\pmA :=  \omega_{\pmA} \conv ( \xi_{\pmA} \overline{\mathcal{P}_{\bd{\varphi}} \bd{\eta} } ) ,
\end{equation}
where for $\pmA$ large enough, $\xi_{\pmA} $ is a smooth cut-off function in $C^\infty ( \RR \times \RR^{ \dn } )$ such that:

\smallskip

\begin{enumerate}[label=(\roman*)]
\item
\label{item:proof:dens:1}
 $\supp ( \xi_{\pmA} ) \subset (- T + b / \pmA  , T - b / \pmA  ) \times \Omega_{(b/\pmA)}$, 
\item 
\label{item:proof:dens:2}
$\xi_{\pmA} \equiv 1$ on $(- T + a / \pmA  , T - a / \pmA  ) \times \Omega_{(a/\pmA)}$, 
\item 
\label{item:proof:dens:3}
and $\rV \partial_t \xi_{\pmA} \rV_{L^\infty} + \rV \gd_x \xi_{\pmA} \rV_{L^\infty}  \leq C \pmA$, 
\end{enumerate}
\smallskip

\noindent 
for some $1 < b < a$ and $C>0$,  
and where $\omega_{\pmA} \in  C^{\infty}_0 ( \RR \times \RR^{\dn}) $ is a mollifier such that $\supp ( \omega_{\pmA} ) \subset B_{\RR\times\RR^{\dn}}( 0 , c/\pmA)$ for some $0< c < b-1$. 
We recall that the definition of $\Omega_{(a/\pmA)}$ and $\Omega_{(b/\pmA)}$ is given in \eqref{eq:def:int-dom}. 
Consequently, 
$\widetilde{\bd{\eta}}_\pmA \in C^{\infty}_0 ( (- T + 1 / \pmA  , T - 1 / \pmA  ) \times \Omega_{(1/\pmA)} )$, so that obviously 
$\div \widetilde{\bd{\eta}}_\pmA \in C^{\infty}_0 ( (- T + 1 / \pmA  , T - 1 / \pmA  ) \times \Omega_{(1/\pmA)} )$ and for all $t \in (- T + 1 / \pmA  , T - 1 / \pmA  ) $, we have 
$\int_{\Omega_{(1/\pmA)}} \div \widetilde{\bd{\eta}}_\pmA ( t, x ) \dm x = 0$. 
Thus we define:
\begin{equation*}
\overline{\bd{\eta}}_\pmA := \widetilde{\bd{\eta}}_\pmA - \bogov [ \div \widetilde{\bd{\eta}}_\pmA ] .
\end{equation*}
Finally, we consider the reverse Piola transform of $\overline{\bd{\eta}}_\pmA$ restricted to $[0,T]\times \Omega$, setting:
\begin{equation*}
\bd{\eta}_\pmA := \mathcal{P}_{\bd{\varphi^{-1}}} \overline{\bd{\eta}}_\pmA 
\in 
C^{\infty}_0 \Big( 
\bigcup_{ t \in [0 , T - 1 / \pmA  ) } \lbrace t \rbrace  \times \Omega_{t, (c_{\V} /\pmA)} 
\Big) ,
\end{equation*}
so that  
$ 
 \div \bd{\eta}_\pmA = 0
$.

\noindent $\bullet$ \emph{Convergence of smooth approximations.}
We now show that:
\begin{equation}
\label{eq:densite:proof:1}
\lV \bd{\eta}_\pmA - \bd{\eta} \rV_{H^1_{0,T} L^2( \Omega_t )} \xrightharpoonup[\pmA\rightarrow \infty]{} 0
\quad \text{ and } \quad 
\lV \bd{\eta}_\pmA - \bd{\eta} \rV_{L^p_{0,T} \Vp ( \Omega_t )} \xrightarrow[\pmA\rightarrow \infty]{} 0 .
\end{equation}
First, we notice that from Lemma \ref{thm:lem:pull-back:piola} (applied for $\mathcal{P}_{\bd{\varphi^{-1}}}$) and in view of the identity $\bd{\eta} =  \mathcal{P}_{\bd{\varphi^{-1}}} \mathcal{P}_{\bd{\varphi}} \bd{\eta}$, 
there exists a constant $C>0$ such that:
\begin{align*}
\lV \bd{\eta}_\pmA - \bd{\eta} \rV_{H^1_{0,T} L^2( \Omega_t )} 
&  \leq  C
\left( 
\lV \overline{\bd{\eta}}_\pmA  -  \mathcal{P}_{\bd{\varphi}} \bd{\eta} \rV_{H^1_{0,T} L^2( \Omega )} 
+
\lV \gd_x (\overline{\bd{\eta}}_\pmA  -  \mathcal{P}_{\bd{\varphi}} \bd{\eta} ) \rV_{L^2_{0,T} L^2 ( \Omega )} 
\right)   ,  \\
\lV \bd{\eta}_\pmA - \bd{\eta} \rV_{L^p_{0,T} \Vp( \Omega_t )} 
&  \leq  C
\lV \overline{\bd{\eta}}_\pmA  -  \mathcal{P}_{\bd{\varphi}} \bd{\eta} \rV_{L^p_{0,T} \Vp( \Omega )}  .
\end{align*}
Thus,   
proving \eqref{eq:densite:proof:1}
amounts to showing that: 
\begin{equation*}
\label{eq:densite:proof:2}
\lV \partial_t \overline{\bd{\eta}}_\pmA - \partial_t  \mathcal{P}_{\bd{\varphi}} \bd{\eta} \rV_{L^2_{0,T} L^2( \Omega )} \xrightharpoonup[\pmA\rightarrow \infty]{}  0
\quad \text{ and } \quad 
\lV \overline{\bd{\eta}}_\pmA -  \mathcal{P}_{\bd{\varphi}} \bd{\eta} \rV_{L^p_{0,T} \Vp ( \Omega )} \xrightarrow[\pmA\rightarrow \infty]{}  0 .
\end{equation*} 
We proceed in two steps. We show  \ref{item:proof:dens:4} first that $\widetilde{\bd{\eta}}_\pmA \rightarrow \mathcal{P}_{\bd{\varphi}} \bd{\eta}$, and \ref{item:proof:dens:5} second that $\bogov [ \div \widetilde{\bd{\eta}}_\pmA ] \rightarrow 0$ as $\pmA \rightarrow \infty$, in both cases for the required norms
$\lV \partial_t ( \cdot ) \rV_{L^2_{0,T} L^2( \Omega )}$ weak and
$\lV \cdot \rV_{L^p_{0,T} \Vp( \Omega )}$ strong. 
\begin{enumerate}[label=(\arabic*), leftmargin=4.5ex]
\item 
\label{item:proof:dens:4}
\textit{Convergence of the main term $\widetilde{\bd{\eta}}_\pmA$.}  
From \eqref{eq:densite:proof:3}, we  
have $\lV \widetilde{\bd{\eta}}_\pmA -  \mathcal{P}_{\bd{\varphi}} \bd{\eta} \rV_{L^p L^p(t)} \longrightarrow 0 $ as $\pmA\rightarrow \infty$. 
From \eqref{eq:densite:proof:-1} and because the supports of the involved functions are compactly supported in $[-T,T]\times \Omega$, we can interchange the time derivative and the convolution.
Thus, we estimate the norm of the weak time derivative as follows:
\begin{align}
\lV \partial_t &\widetilde{\bd{\eta}}_\pmA  -  \partial_t  \mathcal{P}_{\bd{\varphi}} \bd{\eta} \rV_{L^2_{0,T} L^2( \Omega )}  \notag \\
&\leq
\big\lV  \omega_{\pmA} \conv \partial_t ( \xi_{\pmA} \overline{\mathcal{P}_{\bd{\varphi}} \bd{\eta} } -  \overline{\mathcal{P}_{\bd{\varphi}} \bd{\eta} } )   
\big\rV_{L^2_{0,T} L^2( \Omega )}   +
\big\lV  \omega_{\pmA} \conv  \partial_t \overline{\mathcal{P}_{\bd{\varphi}} \bd{\eta} }  - \partial_t \mathcal{P}_{\bd{\varphi}} \bd{\eta}  
\big\rV_{L^2_{0,T} L^2( \Omega )}   \notag \\
&\leq
\big\lV  \partial_t ( \xi_{\pmA} \overline{\mathcal{P}_{\bd{\varphi}} \bd{\eta} } -  \overline{\mathcal{P}_{\bd{\varphi}} \bd{\eta} } )   
\big\rV_{L^2_{-T,T} L^2( \Omega )} 
 +
\big\lV  \omega_{\pmA} \conv  \partial_t \overline{\mathcal{P}_{\bd{\varphi}} \bd{\eta} }  - \partial_t \overline{\mathcal{P}_{\bd{\varphi}} \bd{\eta} }
\big\rV_{L^2_{-T,T} L^2( \Omega )}  .
\label{eq:densite:proof:4}
\end{align}
We directly notice that the second term in \eqref{eq:densite:proof:4} goes to $0$ as $\pmA \rightarrow \infty$. Regarding the first term, we have:
\begin{multline}
\label{eq:densite:proof:5}
\hspace{1cm}
\big\lV  \partial_t ( ( \xi_{\pmA} - 1 ) \overline{\mathcal{P}_{\bd{\varphi}} \bd{\eta} } )   
\big\rV_{L^2_{-T,T} L^2( \Omega )}  \\
\leq  
\big\lV ( \xi_{\pmA} - 1 ) \partial_t \overline{\mathcal{P}_{\bd{\varphi}} \bd{\eta} } 
\big\rV_{L^2_{-T,T} L^2( \Omega )}
+
\big\lV  \partial_t ( \xi_{\pmA} ) \overline{\mathcal{P}_{\bd{\varphi}} \bd{\eta} } 
\big\rV_{L^2_{-T,T} L^2( \Omega )} ,
\end{multline}
where the first term of the right hand side goes to zero as $\pmA \rightarrow \infty$. 
By definition, $ \partial_t  \xi_{\pmA} = 0 $  outside $I_b \times \Omega_{ab} \cup I_{ab} \times \Omega_{a}$, where,  using definition \eqref{eq:def:int-dom}, we set: 
\begin{align*}
I_b \times \Omega_{ab} 
&:= 
\Big( -T + \frac{b}{\pmA} , T - \frac{b}{\pmA}  \Big)
\times \big( \Omega_{(b/\pmA)} \big)^{(a/\pmA)} , \\
I_{ab} \times \Omega_{a} 
&:= 
\Big( -T + \frac{b}{\pmA} , T - \frac{b}{\pmA}  \Big) \setminus 
 \Big[ -T + \frac{a}{\pmA} , T - \frac{a}{\pmA}  \Big] 
\times \Omega_{(a/\pmA)} .
\end{align*}
By definition, for any $x \in \Omega_{ab}$ and for any $t \in I_{ab}$, we have that $\pmA < a \dist ( x , \partial \Omega )^{-1}$ and $\pmA < a \dist ( t , \lbrace -T , T \rbrace  )^{-1}$. 
Then we compute the norm of the second term of the right hand side of \eqref{eq:densite:proof:5} on these two subsets respectively.
We also note that for a.e. $t\in (-T,T)$, 
$\overline{\mathcal{P}_{\bd{\varphi}} \bd{\eta} } (t , \cdot ) \in W^{1,2}_0 (\Omega )$, 
and for a.e. $x \in \Omega$, 
$\overline{\mathcal{P}_{\bd{\varphi}} \bd{\eta} } ( \cdot , x ) \in W^{1,2}_0 ( (-T,T) )$, 
so that, taking also into account \ref{item:proof:dens:3} and Hardy's inequality for $\Omega$ (whose constant is denoted by $c_{\hardy_\Omega}$), we have:
\begin{align*}
\int_{I_b }  \int_{ \Omega_{ab} }
\left\lv
  \partial_t ( \xi_{\pmA} ) \overline{\mathcal{P}_{\bd{\varphi}} \bd{\eta} }
\right\rv^2 
\dm x \dm t 
& \leq 
(C a )^2 
\int_{I_b }   
\left\lV
\dist ( \cdot , \partial \Omega )^{-1} 
  \overline{\mathcal{P}_{\bd{\varphi}} \bd{\eta} }
\right\rV_{L^2 (\Omega_{ab})}^2  \dm t  \notag\\
& \leq 
(C a c_{\hardy_\Omega})^2 
\int_{I_b }   
\left\lV
\gd_x
\overline{\mathcal{P}_{\bd{\varphi}} \bd{\eta} }
\right\rV_{L^2 (\Omega)}^2  \dm t  .
\end{align*}
Thus, $\partial_t ( \xi_{\pmA} ) \overline{\mathcal{P}_{\bd{\varphi}} \bd{\eta} } \longrightarrow 0$  in $L^2 ( (-T,T), L^2 ( \Omega )^3)$ by absolute continuity of Lebesgue integral, 
because $\dist ( \cdot , \partial \Omega )^{-1} 
  \overline{\mathcal{P}_{\bd{\varphi}} \bd{\eta} } \in L^2 (\Omega)$ 
and $\lv \Omega_{ab} \rv \rightarrow 0$. The same arguments holds for the integration over $I_{ab} \times \Omega_{a}$, rewriting:
\begin{align*}
\int_{ \Omega_{a} }  \int_{ I_{ab} }
\left\lv
  \partial_t ( \xi_{\pmA} ) \overline{\mathcal{P}_{\bd{\varphi}} \bd{\eta} }
\right\rv^2 
\dm x \dm t 
& \leq 
(C a )^2 
\int_{ \Omega_{a} }   
\left\lV
\dist ( t , \lbrace -T , T \rbrace  )^{-1} 
\overline{\mathcal{P}_{\bd{\varphi}} \bd{\eta} }
\right\rV_{L^2 ( I_{ab} )}^2 \dm x  \notag\\
& \leq 
(C a c_{\hardy_T})^2 
\int_{ \Omega_{a} }   
\left\lV
\partial_t
  \overline{\mathcal{P}_{\bd{\varphi}} \bd{\eta} }
\right\rV_{L^2 ( (-T,T) )}^2  \dm x  ,
\end{align*}
where $c_{\hardy_T}$ is the Hardy's inequality constant associated to the interval $(-T,T)$. 
All terms going to $0$ as $\pmA \rightarrow \infty$ in \eqref{eq:densite:proof:5}, we finally have that 
$\lV \partial_t \widetilde{\bd{\eta}}_\pmA  -  \partial_t  \mathcal{P}_{\bd{\varphi}} \bd{\eta} \rV_{L^2_{0,T} L^2 ( \Omega )}  \rightarrow 0$. 
Following exactly the same explanations given in the present Item \ref{item:proof:dens:4}, we show as well that $\lV \gd_x \widetilde{\bd{\eta}}_\pmA  -  \gd_x \mathcal{P}_{\bd{\varphi}} \bd{\eta} \rV_{L^p_{0,T} L^p( \Omega )}  \rightarrow 0$. 
So far, we have shown that: 
\begin{align}
\label{eq:conv:tilde-eta-k:LpVp}
&\widetilde{\bd{\eta}}_\pmA \underset{\pmA\rightarrow \infty}{\longrightarrow} \mathcal{P}_{\bd{\varphi}} \bd{\eta} 
\quad\text{ strongly in } L^p ( (0,T) , \Vp( \Omega ) ) , \\
\label{eq:conv:tilde-eta-k:H1H}
&\widetilde{\bd{\eta}}_\pmA \underset{\pmA\rightarrow \infty}{\longrightarrow} \mathcal{P}_{\bd{\varphi}} \bd{\eta} 
\quad\text{ strongly in } H^1 ( (0,T) , L^2( \Omega ) ) .
\end{align}
\item 
\label{item:proof:dens:5}
\textit{Convergence of the correction term $\bogov [ \div \widetilde{\bd{\eta}}_\pmA ] $.}  
From Proposition \ref{thm:bogov}, we can write:
\begin{align*}
\lV 
\bogov [ \div \widetilde{\bd{\eta}}_\pmA ]
\rV_{L^p_{0,T} \Vp( \Omega )}  
&\leq  
c_{\bogov} ( \Omega )
\lV  
\div \widetilde{\bd{\eta}}_\pmA 
\rV_{L^p_{0,T} L^p(  \Omega  )}  ,
\end{align*}
and in view of \eqref{eq:conv:tilde-eta-k:LpVp}, $ \div \widetilde{\bd{\eta}}_\pmA  \rightarrow  \div \overline{ \mathcal{P}_{\bd{\varphi}} \bd{\eta} } = 0 $ in $L^p( (0,T) , L^p( \Omega ))$, so that $\bogov [ \div \widetilde{\bd{\eta}}_\pmA ] \rightarrow 0$ in $L^p( (0,T) , \Vp( \Omega )^3)$. Then from \eqref{eq:estim:bogov:div-1} and \eqref{eq:bogov:commut:dt}:
\begin{align*}
\lV \partial_t  
\bogov [ \div \widetilde{\bd{\eta}}_\pmA ]
\rV_{L^2_{0,T} L^2( \Omega )}  
&= 
\lV   
\bogov [ \div \partial_t \widetilde{\bd{\eta}}_\pmA ]
\rV_{L^2_{0,T} L^2( \Omega )}  \\
&\leq  c_{\bogov} ( \Omega )
\lV  
\partial_t  \widetilde{\bd{\eta}}_\pmA 
\rV_{L^2_{0,T} L^2( \Omega )} .
\end{align*}
Thus, in view of \eqref{eq:conv:tilde-eta-k:H1H}, $\partial_t  
\bogov [ \div \widetilde{\bd{\eta}}_\pmA ]$ is bounded in $L^2 ( (0,T) , L^2( \Omega )^3 )$. 
Let $\bd{\varphi} \in (C^{\infty}_0 ( (0,T)) \times \Omega )^3$, we have: 
\begin{equation*}
\int_{(0,T) \times \Omega } 
\partial_t  
\bogov [ \div \widetilde{\bd{\eta}}_\pmA ]
\cdot
\bd{\varphi} \dm t \dm x 
=  -
\int_{(0,T) \times \Omega } 
\bogov [ \div \widetilde{\bd{\eta}}_\pmA ]
\cdot 
\partial_t  
\bd{\varphi} \dm t \dm x  ,
\end{equation*}
and the right hand side goes to $0$ in view of  \eqref{eq:conv:tilde-eta-k:LpVp}.
Thus we have a subsequence such that $\partial_t  
\bogov [ \div \widetilde{\bd{\eta}}_\pmA ] \rightharpoonup 0$ weakly in $L^2 ( (0,T) , L^2 ( \Omega )^3 )$.
Finally, we have shown that: 
\begin{align*}
\bogov [ \div \widetilde{\bd{\eta}}_\pmA ] 
& \xrightarrow[\pmA\rightarrow \infty]{}  0 
\quad\text{ strongly in } L^p ( (0,T) , \Vp( \Omega )^3 ) , \\
\partial_t  
\bogov [ \div \widetilde{\bd{\eta}}_\pmA ] 
& \xrightharpoonup[\pmA\rightarrow \infty]{} 0
\quad\text{ weakly in } L^2 ( (0,T) , L^2( \Omega )^3 )  .
\end{align*}
\end{enumerate}
This concludes the proof.

\section*{Acknowledgement} 
We would like to thank Céline Grandmont for interesting discussions and comments. 
V. C. and \v S. N.  have been supported by  Præmium Academiæ of \v S. Ne\v casov\' a.  The Institute of Mathematics CAS is supported by RVO:67985840.

\printbibliography

\vspace{1cm}

\end{document}